\numberwithin{equation}{section}
\begin{document}

\newtheorem{theorem}{Theorem}[section]
\newtheorem{corollary}[theorem]{Corollary}
\newtheorem{lemma}[theorem]{Lemma}
\newtheorem{proposition}[theorem]{Proposition}

\newcommand{\adiffop}{A$\Delta$O}
\newcommand{\adiffops}{A$\Delta$Os}

\newcommand{\be}{\begin{equation}}
\newcommand{\ee}{\end{equation}}
\newcommand{\bea}{\begin{eqnarray}}
\newcommand{\eea}{\end{eqnarray}}
\newcommand{\sh}{{\rm sh}}
\newcommand{\ch}{{\rm ch}}
\newcommand{\einde}{$\ \ \ \Box$ \vspace{5mm}}
\newcommand{\De}{\Delta}
\newcommand{\de}{\delta}
\newcommand{\Z}{{\mathbb Z}}
\newcommand{\N}{{\mathbb N}}
\newcommand{\C}{{\mathbb C}}
\newcommand{\Cs}{{\mathbb C}^{*}}
\newcommand{\R}{{\mathbb R}}
\newcommand{\Q}{{\mathbb Q}}
\newcommand{\T}{{\mathbb T}}
\newcommand{\re}{{\rm Re}\, }
\newcommand{\im}{{\rm Im}\, }
\newcommand{\cW}{{\cal W}}
\newcommand{\cJ}{{\cal J}}
\newcommand{\cE}{{\cal E}}
\newcommand{\cA}{{\cal A}}
\newcommand{\cR}{{\cal R}}
\newcommand{\cP}{{\cal P}}
\newcommand{\cM}{{\cal M}}
\newcommand{\cN}{{\cal N}}
\newcommand{\cI}{{\cal I}}
\newcommand{\cMs}{{\cal M}^{*}}
\newcommand{\cB}{{\cal B}}
\newcommand{\cD}{{\cal D}}
\newcommand{\cC}{{\cal C}}
\newcommand{\cL}{{\cal L}}
\newcommand{\cF}{{\cal F}}
\newcommand{\cG}{{\cal G}}
\newcommand{\cH}{{\cal H}}
\newcommand{\cO}{{\cal O}}
\newcommand{\cS}{{\cal S}}
\newcommand{\cT}{{\cal T}}
\newcommand{\cU}{{\cal U}}
\newcommand{\cQ}{{\cal Q}}
\newcommand{\cV}{{\cal V}}
\newcommand{\cK}{{\cal K}}
\newcommand{\cZ}{{\cal Z}}
\newcommand{\intR}{\int_{-\infty}^{\infty}}
\newcommand{\intI}{\int_{0}^{\pi/2r}}
\newcommand{\limp}{\lim_{\re x \to \infty}}
\newcommand{\limn}{\lim_{\re x \to -\infty}}
\newcommand{\limpn}{\lim_{|\re x| \to \infty}}
\newcommand{\diag}{{\rm diag}}
\newcommand{\Ln}{{\rm Ln}}
\newcommand{\Arg}{{\rm Arg}}
\newcommand{\LHP}{{\rm LHP}}
\newcommand{\RHP}{{\rm RHP}}
\newcommand{\UHP}{{\rm UHP}}
\newcommand{\Res}{{\rm Res}}
\newcommand{\ep}{\epsilon}
\newcommand{\ga}{\gamma}
\newcommand{\ka}{\kappa}
\newcommand{\sing}{{\rm sing}}
\newcommand{\rE}{{\mathrm E}}
\newcommand{\rF}{{\mathrm F}}

\title{Product formulas  for the relativistic and nonrelativistic conical functions}
\author{Martin Halln\"as \\Department of Mathematical Sciences, \\ Loughborough University, Leicestershire LE11 3TU, UK \\ and \\Simon Ruijsenaars \\ School of Mathematics, \\ University of Leeds, Leeds LS2 9JT, UK}


\maketitle

\centerline{\large\it Dedicated to Masatoshi Noumi on the occasion of his 60th birthday}
\medskip

\begin{abstract}
The conical function and its relativistic generalization can be viewed as eigenfunctions of the reduced 2-particle Hamiltonians of the hyperbolic Calogero-Moser system and its relativistic generalization. We prove new product formulas for these functions. As a consequence, we arrive at explicit diagonalizations of integral operators that commute with the 2-particle Hamiltonians and reduced versions thereof. The kernels of the integral operators are expressed as integrals over products of the eigenfunctions and explicit weight functions. The nonrelativistic limits are controlled by invoking novel uniform limit estimates for the hyperbolic gamma function.
\end{abstract}

\tableofcontents


\section{Introduction}

In this paper we obtain product formulas for the conical function specialization of the Gauss hypergeometric function ${}_2F_1$ and its `relativistic' generalization $\cR(a_+,a_-,b;x,y)$ from~\cite{R11}. The latter can be viewed as an eigenfunction of the Hamiltonian associated to the Calogero-Moser system of relativistic hyperbolic $A_1$ type. This generalizes the well-known fact that in suitable variables the conical function specialization of ${}_2F_1$ is an eigenfunction of the Hamiltonian of the Calogero-Moser system of nonrelativistic hyperbolic $A_1$ type.

Somewhat surprisingly, our product formulas for the conical function (obtained by taking limits of their relativistic generalizations in Theorems~2.4 and~2.5 below) seem to be new. In the context of harmonic analysis, a product formula for the more general Jacobi function is known since a long time, cf.~Koornwinder's survey~\cite{Koo84}. This formula arises from a group translate and encodes a convolution structure. By contrast, our product formulas (in Theorems~4.2 and 4.3) cannot be interpreted in terms of a generalized translate. Rather, they give rise to 1-parameter families of commuting integral operators on~$L^2((0,\infty))$.

The $\cR$-function was defined and studied in \cite{R07} (see also \cite{R11}), as a $5$-variable specialization of the more general $8$-variable `relativistic' generalization $R$ of ${}_2F_1$, introduced in \cite{R94}. The definition of the $R$-function in {\it loc.~cit.} is in terms of a contour integral that generalizes the Barnes representation for ${}_2F_1$. New representations of the $R$-function were later obtained by van de Bult \cite{vdB06} and by van de Bult, Rains and Stokman \cite{BRS07}. By suitable specializations the above results lead to three different representations for the $\cR$-function.

More recently, it has been shown that the $\cR$-function admits five further integral representations that, in contrast to previous representations, involve only four hyperbolic gamma functions~\cite{R11}. We only need one of these, which we proceed to detail.

 First, throughout the paper we choose $a_+$ and $a_-$ positive, and use further parameters
\be
\alpha\equiv 2\pi/a_+a_-,\ \ \ a\equiv (a_++a_-)/2,
\ee
\be
a_s\equiv\min(a_+,a_-),\ \ \ a_l\equiv\max(a_+,a_-).
\ee
  The representation  (3.51) in~\cite{R11} for the $\cR$-function amounts to
\be\label{cRJ}
\cR(b;x,y)=(a_+a_-)^{-1/2}G(2ib-ia)J(b;x,y)\prod_{\de=+,-}G(\de y+ia-ib),
\ee
with $J(b;x,y)$ given by
\be\label{Jrep}
J(b;x,y)=\int_\R dz\frac{G(z+x/2-ib/2)G(z-x/2-ib/2)}{G(z+x/2+ib/2)G(z-x/2+ib/2)}\exp(i\alpha zy).
\ee
Here we take at first $(b,x,y)\in (0,2a)\times\R^2$, and  
$G(z)\equiv G(a_+,a_-;z)$ denotes the hyperbolic gamma function, whose salient features are reviewed in Appendix~A.
(Just as we have done above, we shall suppress the dependence on $a_+$, $a_-$, whenever this is not likely to cause ambiguities.) In particular, it is clear from the reflection equation~\eqref{Grefl}   that $J$ is even in~$x$ and~$y$, while the conjugacy relation~\eqref{Gconj} entails real-valuedness for real arguments.

In this paper we mostly deal with the $J$-function~\eqref{Jrep}, as opposed to the $\cR$-function and further avatars introduced shortly. It naturally arises in the step from $N=1$ to $N=2$ in our recent recursive construction of the arbitrary-$N$ joint eigenfunctions of the hyperbolic relativistic Calogero-Moser system \cite{HR14}, and also equals the function $B(b;x,y)$ given by Eq.~(3.24) in~\cite{R11}.

With a view towards making this paper somewhat more self-contained, we proceed to summarise some key properties of $J(b;x,y)$ and several related functions we have occasion to use. The analyticity properties of the $R$-function are known in great detail from Theorem 2.2 in \cite{R99}. Combining this theorem with \eqref{cRJ} and the definition of~$\cR$ as a specialization of~$R$, it is readily seen that $G(ib-ia)J(b;x,y)$ extends to a function that is meromorphic in $b$, $x$ and $y$, with poles that can only be located on the affine hyperplanes
\be
\pm x=2ia-ib+i(ka_++la_-),\ \ \ k,l\in\N\equiv \{ 0,1,2,\ldots\},
\ee
\be
\pm y=ib+i(ka_++la_-),\ \ \ k,l\in\N.
\ee
Moreover, the pole order  is bounded by the corresponding zero order of the product function
\be
\prod_{\de=+,-}E(\de x+ib-ia)E(\de y-ib+ia),
\ee
where the $E$-function is an entire function related to the hyperbolic gamma function by $G(z)=E(z)/E(-z)$ (see Appendix \ref{AppA} for further details).

A pivotal role in obtaining the product formulas for the $J$-function is played by the explicit evaluation 
\be\label{Jeval}
J(b;ib,v)=\sqrt{a_+a_-}G(ia-2ib)\prod_{\de=+,-}G(\de v-ia+ib),
\ee
which follows from \eqref{cRJ} and Eqs.~(2.13) and (2.20) in \cite{R11}.

 Another crucial ingredient is the asymptotic behavior of the $J$-function for $\re x\to\infty$. This involves a specialization of Theorem 1.2 in \cite{R03II}, which deals with the 4-coupling $BC_1$ case, to the 1-coupling $A_1$ case at hand. To state the relevant result, we introduce the   $c$-function
\be\label{c}
c(b;z)\equiv \frac{G(z+ia-ib)}{G(z+ia)},
\ee 
the phase function
\be\label{phi}
\phi(b)\equiv \exp(i\alpha b(b-2a)/4),
\ee 
and the scattering function
\be\label{u}
u(b;z)\equiv -c(b;z)/c(b;-z).
\ee
For later purposes we mention the involution symmetry
\be\label{spsymm}
\phi(2a-b)=\phi(b),\ \ \ u(2a-b;z)=u(b;z).
\ee
By contrast, the $c$-function and weight function,
\be\label{w}
w(b;z)\equiv 1/c(b;z)c(b;-z),
\ee
are not invariant under this involution, but there is a simple relation between the two distinct weight functions:
\be\label{wsym}
w(b;z)w(2a-b;z)=G(z+ia)^2G(-z+ia)^2.
\ee

Next, we rewrite the $J$-function in terms of an E-function defined by
\bea\label{rE}
\rE(b;x,y) & \equiv & (a_+a_-)^{-1/2}\phi(b)G(ib-ia) \frac{J(b;x,y)}{c(b;x)c(2a-b;y)}
\\ \nonumber
& = & \phi(b)G(ib-ia)G(ia-2ib) \frac{\cR(b;x,y)}{c(b;x)c(b;y)},
\eea
where we used \eqref{cRJ}. This E-function is the $A_1$ specialization of the $BC_1$ $\cE$-function dealt with in Theorem 1.2 of \cite{R03II}, cf.~also (2.36)--(2.38) in~\cite{R11}. Setting 
\be\label{Eas}
\rE_{\rm as}(b;x,y)\equiv  \exp( i\alpha xy/2)-u(b;-y) \exp(- i\alpha xy/2),
\ee
this theorem yields a bound
\be\label{Easb}
|(\rE-\rE_{\rm as})(b;x,y)|<C(b,\de,y,\im x)\exp(-\rho\re x),\ \ \ \rho>0,\ \ \re x>\de>0,
\ee
where $C$ is a positive continuous function on $(0,2a)\times (0,\infty)^2\times\R$. Moreover, specializing to $\im x=0$, it is known that the decay rate~$\rho$ can be chosen equal to   any positive number~$r$ satisfying \eqref{r}. 

Now from the asymptotics \eqref{Gas} of the hyperbolic gamma function it is straightforward to infer a bound
\be\label{cb}
c(b;z)/\phi(b)=\exp(-\alpha b z/2)(1+O(\exp(-r\re z)),\ \ \ \re z\to\infty,
\ee
uniformly on $\im z$-compacts. Thus,
assuming $b\in(0,2a)$ and $y\in(0,\infty)$, the leading asymptotic behaviour of $J(b;x,y)$ for $\re x\to\infty$ is given by the function
\be\label{Jas}
J_{\rm as}(b;x,y)\equiv \sqrt{a_+a_-}G(ia-ib)\exp(-\alpha bx/2)\sum_{\tau=+,-}c(2a-b;\tau y) \exp(\tau i\alpha xy/2).
\ee
More specifically, we deduce from the above
\be\label{Jasb}
|(J-J_{\rm as})(b;x,y)|<C(b,\de,y,\im x)\exp(-(\alpha b/2+\rho)\re x),\ \ \ \rho>0,\ \ \re x>\de>0,
\ee
where $C$ is a positive continuous function on $(0,2a)\times (0,\infty)^2\times\R$ and where for $\im x=0$   the decay rate $\rho$ can be chosen equal to   any positive number $r$ satisfying \eqref{r}. Clearly, by evenness of $J(x,y)$ in~$x$, the asymptotics for $\re x\to-\infty$ is given by $J_{\rm as}(-x,y)$. 

We need to invoke some more features of the $\rE$- and $J$-functions that follow by specialization from results in~\cite{R03II} and~\cite{R03III}, cf.~also Subsection~2.2 in~\cite{R11}. First, the E-function satisfies the self-duality relation
\be
\rE (b;x,y)=\rE (b;y,x),
\ee
and has the symmetry property
\be
\rE (b;x,y)=\rE (2a-b;x,y).
\ee
In view of~\eqref{rE}, this entails
\be\label{Jsym}
J(b;x,y)=G(ia-ib)^2J(2a-b;y,x).
\ee 

 Secondly, the $J$-function is a joint eigenfunction of four independent analytic difference operators (henceforth A$\De$Os), two acting on $x$ and two on $y$. The corresponding analytic difference equations (henceforth A$\De$Es) read
\be\label{cRreig1}
A_\de(b;x)J(b;x,y)=2c_\de(y)J(b;x,y),\ \ \ \de=+,-,
\ee
\be\label{cRreig2}
A_\de(2a-b;y)J(b;x,y)=2c_\de(x)J(b;x,y),\ \ \ \de=+,-,
\ee
with  the A$\De$Os given by
\be
A_\de(b;z)\equiv \frac{s_\de(z-ib)}{s_\de(z)}T_{ia_{-\de}}^z+\frac{s_\de(z+ib)}{s_\de(z)}T_{-ia_{-\de}}^z.
\ee
Here, the translation operators are defined on analytic functions by
\be
(T_c^z f)(z)\equiv f(z-c),\ \ \ c\in\C^*,
\ee
and we are using the notation
\be
s_\de(z)\equiv \sinh(\pi z/a_\de),\ \ c_\de(z)\equiv \cosh(\pi z/a_\de),\ \ e_\de(z)\equiv\exp(\pi z/a_\de),\ \ \ \de=+,-.
\ee

Thirdly, at the end of Section~2 we shall use that the generalized Fourier
transform
\be\label{cF}
\cF(b)
\, :\, \cC\equiv C_0^{\infty}((0,\infty))\subset   L^2((0,\infty)) \to  L^2((0,\infty)),\ \ \ b\in(0,2a),
\ee
defined by
\be\label{cFE}
(\cF(b)\psi)(x)\equiv\left(\frac{1}{2a_+a_-}\right)^{1/2}\int_0^{\infty}\rF(b;x,y)\psi(y)dy,\ \ \ \
\psi\in\cC,
\ee
extends to a unitary operator. The $\rF$-function occurring here is the last cousin of the $\cR$-function we need (besides~$J$ and~$\rE$). It can be defined by
\be\label{rF}
\rF(b;x,y)\equiv G(ia-2ib)G(ib-ia)w(b;x)^{1/2}\cR(b;x,y)w(b;y)^{1/2},\ \ b\in(0,2a),\ \ x,y>0,
\ee
with positive square roots understood. Alternatively, it can be rewritten as
\be
\rF(b;x,y)= \phi(b)^{-1}c(b;x)w(b;x)^{1/2}\rE(b;x,y)c(b;y)w(b;y)^{1/2},
\ee
cf.~\eqref{rE}. Moreover, using the identity~\eqref{wsym}, we see that it is related to~$J$ by
\be\label{rFJ}
\rF(b;x,y)= (a_+a_-)^{-1/2} G(ib-ia)w(b;x)^{1/2}J(b;x,y) w(2a-b;y)^{1/2}.
\ee
Just as the $\rE$-function this function has the symmetry properties
\be\label{rFsym}
\rF(b;x,y)=\rF(b;y,x)=\rF(2a-b;x,y),
\ee
but by contrast to the $\rE$-function it is real-valued. (Note that the quotient of~$\rF$ and~$\rE$ is a phase factor involving a square root of the $u$-function~\eqref{u}.)
Thus the transform $\cF(b)$ is not only unitary, but also self-adjoint (hence involutory); clearly, it also satisfies
\be\label{cFsym}
\cF(b)=\cF(2a-b),\ \ \ b\in(0,2a).
\ee
(The transforms  $\cF(a_-)=\cF(a_+)$ amount to the   sine transform, while the limits $\cF(0)=\cF(2a)$ exist and amount to the cosine transform.)

The transform diagonalizes the Hamiltonians 
\be\label{Ham}
H_\de(b;z)\equiv \left(\frac{s_\de(z-ib)}{s_\de(z)}\right)^{1/2}T_{ia_{-\de}}^z \left(\frac{s_\de(z+ib)}{s_\de(z)}\right)^{1/2}+ \Big( i\to -i\Big),\ \  \   \de=+,-,
\ee
in the sense that on the dense subspace~$\cC$ one has
\be
H_{\pm}(b)\cF(b)=2\cF(b) c_{\pm}(\cdot).
\ee
Note that $\cC$ is a domain of essential self-adjointness for the multiplication operators $2c_{\pm}(\cdot)$. Thus the unitary transform~$\cF(b)$ makes it possible to associate commuting self-adjoint operators on $L^2((0,\infty),dz)$ to the A$\De$Os $H_{\pm}(b;z)$.

We are now in a position to sketch  the results and organization  of this paper. In Section~2 we first clarify the special character of the kernel function
\be\label{K}
\cK(b;x,y,z)\equiv \prod_{\de_1,\de_2,\de_3=+,-}G((\de_1 x+\de_2 y+\de_3 z-ib)/2),
\ee
by proving the kernel identities it satisfies, cf.~Proposition~2.1. Then we focus on the function
\be\label{F}
F(b,v;x,y)\equiv \int_\R dz\, w(b;z)J(b;z,v)\cK(b;x,y,z),\ \ \ x,y\in\R,
\ee
and show it is an eigenfunction of the A$\De$Os $A_{\pm}(b;x)$ with eigenvalue $2c_{\pm}(v)$, cf.~Lemma~2.2. By invariance under $x\leftrightarrow y$, this is also true for $A_{\pm}(b;y)$, and then we can appeal to previous uniqueness and continuity results to obtain
\be\label{FcRcR}
F(b,v;x,y)=\lambda(b,v)J(b;x,v)J(b;y,v),\ \ \ b\in(0,2a),\ \ \ x,y\in\R.
\ee

It is now far from obvious, but true that the proportionality factor $\lambda(b,v)$ equals~$4$, hence yielding our key result 
\be\label{prodForm}
J(b;x,v)J(b;y,v)=\frac{1}{2}\int_0^\infty  w(b;z)J(b;z,v)\cK(b;x,y,z)dz,\ \  b\in(0,2a),\ \ x,y,v\in\R, 
\ee
cf.~Theorem~2.4. The proof of this identity hinges on the explicit evaluation~\eqref{Jeval} and a rather arduous asymptotic analysis carried out in~Lemma~2.3. 

Using the $J$-symmetry \eqref{Jsym}, we obtain a second product formula in Theorem~2.5. We stress that we do not invoke the unitary involution $\cF(b)$ to arrive at these product formulas. However, we can derive some remarkable consequences when we employ $\cF(b)$. They emerge after using \eqref{rFJ} to switch from $J$ to the kernel $\rF(b;x,y)/\sqrt{2a_+a_-}$ of $\cF(b)$, cf.~\eqref{cFE}. 

More specifically, we can completely elucidate the Hilbert space features of the family of integral operators $\cI_z(b)$, $z\ge 0$, given by~\eqref{cIb}. Theorem~2.6 reveals that this is a commuting family of bounded self-adjoint operators on $L^2((0,\infty))$ diagonalized by~$\cF(b)$ as the multiplication operators $2J(b;z,\cdot)$. Moreover, the use of $\cF(b)$ allows us to obtain the striking identity~\eqref{id1}.

The functions $\cR$, $J$, $\rE$ and $\rF$ occurring in Section~2 are related by similarity transformations that give rise to different sets of four analytic difference operators whose joint eigenfunctions they are. (Recall we detailed these A$\De$Os for the $J$-function in \eqref{cRreig1} and \eqref{cRreig2}, and for the $\rF$-function in~\eqref{Ham}.) These objects are the center-of-mass reductions of 2-particle counterparts studied  in Section~3. More precisely, we only employ the 2-particle versions of $J$ and $\rF$ given by
\be\label{Psi2}
\Psi_2(b;x,y)\equiv \exp(i\alpha(x_1+x_2)(y_1+y_2)/2)\Psi(b;x_1-x_2,y_1-y_2),\ \ \Psi=J, \rF,
\ee
(cf.~\eqref{J2J} and \eqref{rF2}), and do not consider the 2-particle A$\De$Os whose joint eigenfunctions they are. The 2-particle counterpart of the kernel function $\cK$ is the function~$\cS_2(b;x,y)$ given by~\eqref{cS}. The first result of Section~3 is that the 2-particle eigenfunction~$J_2(b;z,y)$ is also an eigenfunction of the integral operator whose kernel is the product of $\cS_2(b;x,z)$ and the weight function $w(b;z_1-z_2)$; moreover, the eigenvalue is an explicit product of $G$-functions, cf.~Theorem~3.1. 

The proof of this theorem only involves the product formula~\eqref{prodForm} and the Fourier transform \eqref{Fform}. That is, it does not involve the unitary $\cF(b)$. When we employ the 2-particle version $\cF_2(b)$ of $\cF(b)$ (given by~\eqref{cFE2}), we can reformulate Theorem~3.1 as an explicit diagonalization of the integral operator~$\cI_2(b)$ defined by~\eqref{cI2}, cf.~Theorem~3.2. As a consequence, the kernel of the 2-particle integral operator can be represented by a formula (namely~\eqref{kerform}) that enables us to recover the formula~\eqref{id1} for the kernel of the reduced (`center-of-mass') integral operator~$\cI_z(b)$. The crucial step here is to invoke the definition~\eqref{Jrep} of the $J$-function, so that we come full circle.

In Section~4 we derive nonrelativistic counterparts of the results in Sections~2 and~3.
 The self-duality of the joint eigenfunctions is not preserved by the nonrelativistic limit. Indeed, the conical function and its 2-particle generalization are joint eigenfunctions of a hyperbolic (nonrelativistic) differential operator and a rational (relativistic) difference operator, cf.~\cite{R11}. Accordingly, they admit distinct representations involving either hyperbolic functions or the rational (Euler) gamma function.
 
The same is true for the kernel functions, product formulas  and associated integral operators. The first, hyperbolic type of limit can be handled for the integrands by using the previously known limit formula \eqref{GGlim}. The second type, which leads to the $\Gamma$-function, involves the $G\to \Gamma $ limit encoded in the equations \eqref{defH}--\eqref{Plim}.
 
  To control the limits of the pertinent integrals, however, we need uniform bounds for the two types of limits that are strong enough to invoke the dominated convergence theorem. We obtain such bounds in Appendix~B and Appendix~C. These estimates are new and of independent interest.

\section{Product formulas for $J(b;x,y)$}\label{Sec2}
The main purpose of this section is to obtain the product formula   \eqref{prodForm} for the $J$-function, given by \eqref{Jrep}.
We begin by focusing on the kernel function. 
It is clear from its definition~\eqref{K} that $\cK(b;x,y,z)$ is a meromorphic function of $b,x,y,z$, with poles  located only on the affine hyperplanes (cf.~\eqref{Gpo})
\be\label{Kps}
\de_1 x+\de_2 y+\de_3 z=ib-2ia-2i(ka_++la_-),\ \ \ \de_1,\de_2,\de_3=+,-,\ \ k,l\in\N.
\ee
As demonstrated by the following proposition, it satisfies three independent kernel identities.

\begin{proposition}
Letting $b\in\C$, we have
\be\label{Kids}
A_\de(b;x)\cK(b;x,y,z)=A_\de(b;y)\cK(b;x,y,z)=A_\de(b;z)\cK(b;x,y,z),\ \ \ \de=+,-.
\ee
\end{proposition}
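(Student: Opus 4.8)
The plan is to establish the kernel identities by direct computation, exploiting the symmetry of the definition~\eqref{K} under permutations of $x$, $y$, $z$. Since $\cK(b;x,y,z)$ is manifestly symmetric in its three spatial arguments, it suffices to prove just one of the three equalities in~\eqref{Kids}, say $A_\de(b;x)\cK = A_\de(b;z)\cK$; the remaining equality then follows by relabelling. In fact, because the claim is that \emph{all three} applications of $A_\de(b;\cdot)$ produce the same result, I would verify that each of $A_\de(b;x)\cK$, $A_\de(b;y)\cK$, $A_\de(b;z)\cK$ equals one common explicit meromorphic expression, and symmetry reduces this to a single functional-equation check.

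First I would apply $A_\de(b;x)$ to $\cK$. Recall $A_\de(b;x) = \frac{s_\de(x-ib)}{s_\de(x)}T_{ia_{-\de}}^x + \frac{s_\de(x+ib)}{s_\de(x)}T_{-ia_{-\de}}^x$, so the two shift operators send $x \mapsto x - ia_{-\de}$ and $x \mapsto x + ia_{-\de}$. The key computation is to determine how the product $\prod_{\de_1,\de_2,\de_3} G((\de_1 x + \de_2 y + \de_3 z - ib)/2)$ transforms under $x \mapsto x \mp ia_{-\de}$. Each shift moves the argument of four of the eight $G$-factors (those with $\de_1 = +$) by $\mp ia_{-\de}/2$ and the other four (with $\de_1 = -$) by $\pm ia_{-\de}/2$. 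I would then invoke the first-order difference equation for the hyperbolic gamma function (the analogue of the $\Gamma$-function recursion $\Gamma(z+1) = z\Gamma(z)$, recorded in Appendix~A as~\eqref{Gfe} or similar), which expresses $G(z \pm ia_{-\de}/2)/G(z \mp ia_{-\de}/2)$ in terms of a hyperbolic sine factor. Applying this to each of the eight factors converts the shifted product $T_{\pm ia_{-\de}}^x \cK$ into $\cK$ times an explicit ratio of $s_\de$-functions in the variables $x$, $y$, $z$, $b$.

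After this reduction, proving the identity amounts to a \emph{trigonometric} functional equation: combining the two terms of $A_\de(b;x)\cK$ and dividing out the common factor $\cK$, one must check that
\be
\frac{s_\de(x-ib)}{s_\de(x)}R_+(x,y,z) + \frac{s_\de(x+ib)}{s_\de(x)}R_-(x,y,z)
\ee
is symmetric under cyclic permutation of $(x,y,z)$, where $R_\pm$ are the sine-ratios produced by the shift. This is a sinh-function identity that should follow from repeated use of the product-to-sum formula $2\,\sh(A)\sh(B) = \ch(A+B) - \ch(A-B)$ and the addition formulas for $\sh$ and $\ch$. \textbf{The main obstacle} will be the bookkeeping in this trigonometric step: there are eight $G$-factors contributing eight sinh-factors per shift, and assembling them into a manifestly $(x,y,z)$-symmetric expression requires careful tracking of which sign combinations $\de_1,\de_2,\de_3$ pair up. I expect the cleanest route is to group the eight factors according to the parity of $\de_2 + \de_3$, so that the $x$-shift acts uniformly on each group, and then to recognize the resulting sum as a symmetric function via a known sinh-identity rather than brute expansion. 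Since the statement allows arbitrary $b \in \C$ and the identity is between meromorphic functions, it suffices to verify it on an open set where all factors are regular, after which it holds by analytic continuation.
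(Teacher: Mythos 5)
Your opening steps coincide with the paper's own: by the manifest permutation symmetry of $\cK$ it suffices to prove a single equality, and dividing by the shifted kernel $\cK(b;x-ia_{-\de},y,z)$ and invoking the A$\De$Es \eqref{GADeEs} converts the claim into an identity among hyperbolic functions (the paper's \eqref{Id}). Two corrections of detail in that reduction: the ratios produced by the shifts are hyperbolic \emph{cosines} --- \eqref{GADeEs} gives $G(w+ia_{-\de}/2)/G(w-ia_{-\de}/2)=2c_{\de}(w)$ --- not sines (the $s_\de$'s enter only through the coefficients of $A_\de$); and since the $x$-shift distinguishes the eight $G$-factors only through the sign $\de_1$, your proposed grouping by the parity of $\de_2+\de_3$ has no organizing effect.

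The genuine gap lies in the final step, which is where all the content of the proposition sits. You reduce to the trigonometric identity and then assert that it ``should follow'' from product-to-sum and addition formulas, explicitly flagging the bookkeeping as the main obstacle without resolving it; no identity is cited and no computation is carried out. Your route could not fail in principle --- both sides of \eqref{Id} are rational functions of the exponentials $e_\de$ of half the variables, so clearing denominators and expanding is a finite check --- but as written the decisive verification is absent. The paper's device for avoiding exactly this morass is a Liouville-type argument: both sides of \eqref{Id} are $2ia_\de$-periodic in $x$ and tend to the same limits $e_\de(\mp ib)+e_\de(\mp 3ib)$ as $\re x\to\pm\infty$, and their (generically simple) poles in a period strip lie only at $x=0$, $x=ia_\de$, and $x=\de_2 y+\de_3 z-ib+ia_\de$. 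One checks that the residues at these poles cancel --- by evenness of both sides in $y$ and $z$ only the case $\de_2=\de_3=+$ need be computed --- whence the difference of the two sides is entire, periodic, and bounded with limit zero, hence identically zero by Liouville's theorem. This replaces open-ended trigonometric manipulation by finitely many residue computations; either adopt such an argument or actually perform the expansion, since as it stands your proposal establishes only the easy half of the proof.
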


\begin{proof}
Since $\cK(x,y,z)$ is manifestly invariant under any permutation of the variables $(x,y,z)$, it suffices to establish the first equality in \eqref{Kids}. After dividing both the left-hand side and the right-hand side by $\cK(b;x-ia_{-\de},y,z)$, we use the A$\De$Es \eqref{GADeEs} to rewrite the result as
\begin{multline}\label{Id}
\frac{s_\de(x-ib)}{s_\de(x)}+\frac{s_\de(x+ib)}{s_\de(x)}\prod_{\de_2,\de_3=+,-}\frac{c_\de((x+\de_2 y+\de_3 z-ib)/2)}{c_\de((x+\de_2 y+\de_3 z+ib)/2)}\\
=\frac{s_\de(y-ib)}{s_\de(y)}\prod_{\de_3=+,-}\frac{c_\de((x-y+\de_3 z-ib)/2)}{c_\de((x-y+\de_3 z+ib)/2)}+\frac{s_\de(y+ib)}{s_\de(y)}\prod_{\de_3=+,-}\frac{c_\de((x+y+\de_3 z-ib)/2)}{c_\de((x+y+\de_3 z+ib)/2)}.
\end{multline}
It is readily seen that both sides are $2ia_\de$-periodic functions of $x$ with equal limits
\be
e_\de(\mp ib)+e_\de(\mp 3ib),\ \ \ \re x\to\pm\infty,
\ee
and that the residues at the (generically simple) poles $x=0$ and $x=ia_\de$ in the period strip cancel. By Liouville's theorem, it remains to verify that the residues at the poles $x=\de_2 y+\de_3 z-ib+ia_\de$ cancel as well, and this amounts to a routine calculation. (In fact, one need only check the case $\de_2=\de_3=+$, since the left-hand side and right-hand side of \eqref{Id} are even functions of both $y$ and $z$.) 
\end{proof}

Assuming $(b,v)\in(0,2a)\times (0,\infty)$, we show next that the product $J(b;x,v)J(b;y,v)$ can be expressed as an integral over the auxiliary variable $z$ in the integrand
\be\label{I}
I(b,v;x,y,z)\equiv w(b;z)J(b;z,v)\cK(b;x,y,z),
\ee
occurring on the right-hand side of~\eqref{F}.
(Below, we will often suppress the dependence on $b$ and $v$, whenever this is not likely to cause ambiguities.)

Combining \eqref{w} with \eqref{GADeEs} and \eqref{GE}, we deduce
\be
w(z)=\prod_{\de=+,-}2s_\de(z)\frac{E(\de z+ib-ia)}{E(\de z+ia-ib)}.
\ee
Since the product function $J(z,v)\prod_{\de=+,-}E(\de z+ib-ia)$ is holomorphic for all $z\in\C$, it follows that poles of $I(x,y,z)$ are due either to zeros of the $E$-functions $E(\de z+ia-ib)$, $\de=+,-$, which are located at (cf.~\eqref{Ezs})
\be
\de z=ib+i(ka_++la_-),\ \ \ \de=+,-,\ \ k,l\in\N,
\ee
or poles of the kernel function $\cK(x,y,z)$. Letting first $x,y\in\R$, it is clear that the integration contour $\R$ stays away from these pole sequences. Making use of the asymptotics \eqref{Gas} of the hyperbolic gamma function, we readily infer from \eqref{w} and \eqref{K} that
\be\label{was}
w(b;z)=\exp(\alpha b|\re z|)(1+O(\exp(-r|\re z|))),\ \ \ |\re z|\to\infty,
\ee
\be\label{Kas}
\cK(b;x,y,z)=\exp(-\alpha b|\re z|)(1+O(\exp(-r|\re z|))),\ \ \ |\re z|\to\infty,
\ee
where the decay rate is any positive number $r$ satisfying \eqref{r} and the implied constants are uniform for $(b,\im z)$ and $(b,x,y,\im z)$ varying over compact subsets of $(0,2a)\times\R$ and $(0,2a)\times\C^2\times\R$, respectively. Hence \eqref{Jas}--\eqref{Jasb} entail that the integrand $I$ decays exponentially as $|z|\to\infty$, so that the function~\eqref{F}
is well defined.

To motivate the next step we consider the special $b$-values
\be\label{bmn}
b_{mn}\equiv ma_++na_-,\ \ \ m,n\in\Z,
\ee
inasmuch as they satisfy $b_{mn}\in (0,2a)$, and recall from Section 2.3 in \cite{R11} that the vector space of meromorphic joint solutions $f(x)$ to the A$\De$Es
\be\label{feig}
A_\pm(b_{mn};z)f(z)=2c_\pm(v)f(z),\ \ \ v>0,\ \ a_+/a_-\notin\Q,
\ee
is two-dimensional. More specifically, the subspace consisting of even meromorphic solutions is one-dimensional and is thus spanned by the function $J(b_{mn};z,v)$. With this result in mind, we aim to prove that $F(x,y)$ (as initially defined by \eqref{F}) has a meromorphic continuation to all of $\C$ as a function of $x$ which satisfies the A$\De$Es \eqref{feig}. For this, we need not restrict attention to the special $b$-values \eqref{bmn}.

Clearly, the fixed contour representation \eqref{F} and the pole locations \eqref{Kps} for $\cK(x,y,z)$ entail that $F(x,y)$ is holomorphic in the domain
\be
D\equiv \{(x,y)\in\C^2 | |\im x|+|\im y|<2a-b\}.
\ee
For sufficiently small values of $b$, this already suffices to show that $F$ is a joint eigenfunction of the A$\De$Os $A_\de(x)$, $\de=+,-$, with the expected eigenvalues. More precisely, we fix attention on the subset
\be
D_s\equiv \{(x,y)\in D | (x+i\eta,y)\in D, \forall\eta\in[-a_l,a_l]\},\ \ \ b\in(0,a_s),
\ee
for which the shifted arguments remain within the holomorphy domain $D$. Note that the restriction on $b$ is necessary in order for $D_s$ to be non-empty.

\begin{lemma}\label{FeigL}
Fixing $b\in(0,a_s)$ and $v\in(0,\infty)$, and letting $(x,y)\in D_s$, we have the eigenvalue equations
\be\label{Feig}
A_\de(x) F(v;x,y)=2c_\de(v)F(v;x,y),\ \ \ \de=+,-.
\ee
\end{lemma}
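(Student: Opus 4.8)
The plan is to prove \eqref{Feig} by threading the kernel identity of Proposition~2.1 together with the A$\De$Es \eqref{cRreig1} satisfied by $J(\cdot,v)$, using the formal self-adjointness of $A_\de$ with respect to the weight $w(b;z)\,dz$ as the bridge between the two. The role of the standing hypotheses $b\in(0,a_s)$ and $(x,y)\in D_s$ is precisely to make each analytic manipulation legitimate: the former guarantees that $D_s$ is nonempty and that $0<b<a_{-\de}$ for both $\de$, while the latter keeps all shifted arguments inside the holomorphy domain $D$.

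First I would move $A_\de(x)$ inside the integral. Since $(x,y)\in D_s$, each shifted point $(x\pm ia_{-\de},y)$ still lies in $D$, so $F(v;x\pm ia_{-\de},y)$ is represented by the \emph{same} contour integral \eqref{F} over $\R$; as the coefficients $s_\de(x\mp ib)/s_\de(x)$ of $A_\de(x)$ depend on $x$ alone, this yields
\[
A_\de(x)F(v;x,y)=\int_\R w(z)J(z,v)\,\big(A_\de(x)\cK\big)(x,y,z)\,dz .
\]
Next I would apply the first equality in \eqref{Kids} to replace $A_\de(x)\cK$ by $A_\de(z)\cK$, converting the $x$-A$\De$O into the $z$-A$\De$O acting on the kernel. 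The core step is then to transfer $A_\de(z)$ from $\cK$ onto the factor $w(z)J(z,v)$ by a contour-shift (``integration by parts for A$\De$Os'') argument: writing out $A_\de(z)\cK$ produces the two shifted kernels $\cK(x,y,z\mp ia_{-\de})$, and substituting $z\mapsto z\pm ia_{-\de}$ in the respective terms displaces the contours to $\R\mp ia_{-\de}$ while leaving $\cK(x,y,z)$ undisplaced. Shifting both contours back to $\R$ and invoking the first-order difference equation for $w$ (equivalently, the $w\,dz$-self-adjointness of $A_\de$, a consequence of the gamma A$\De$Es \eqref{GADeEs} via \eqref{w}), I would recombine the two terms into $w(z)\,\big(A_\de(z)J\big)(z,v)\,\cK(x,y,z)$. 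Finally \eqref{cRreig1}, $A_\de(z)J(z,v)=2c_\de(v)J(z,v)$, pulls out the constant, giving $A_\de(x)F=2c_\de(v)F$.

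The hard part will be justifying the two contour shifts, i.e.\ controlling the poles of the integrand in the horizontal strips $0<\pm\im z<a_{-\de}$ that are swept out. Two facts must be verified. \textbf{(i)} The poles of $w(z\pm ia_{-\de})J(\cdot,v)$ that fall inside these strips, together with the would-be poles from the denominators $s_\de(z\pm ia_{-\de})$, all cancel: the numerators $s_\de(z\pm ia_{-\de}\mp ib)$ of the shifted coefficients vanish exactly at those members of the sequence $z=\pm\big(ib+i(ka_++la_-)\big)$ that enter the strips, while the $s_\de$-factor built into $w$ through \eqref{w} annihilates the denominator poles. \textbf{(ii)} No pole of $\cK(x,y,z)$ enters the strips; here one uses that $(x,y)\in D_s$ forces $|\im x|+|\im y|<a_s-b$, which by \eqref{Kps} pushes the $z$-poles of $\cK$ out to $|\im z|>a_l\ge a_{-\de}$, clear of the strips. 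The vertical pieces at $\re z\to\pm\infty$ vanish by the exponential decay recorded in \eqref{was}, \eqref{Kas} and \eqref{Jasb}. Once this pole-counting is in place no residues are picked up and the shifts are clean, after which the weight difference equation and the recombination in step two are routine verifications.
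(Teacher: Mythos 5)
Your proposal is correct and follows essentially the same route as the paper's proof: acting with $A_\de(x)$ under the integral sign (legitimized by $(x,y)\in D_s$), trading it for $A_\de(z)$ via the kernel identities of Proposition~2.1, transferring the shifts onto $w(z)J(z,v)$ by the contour displacements combined with the first-order difference equation for $w$, and concluding with the eigenvalue equation~\eqref{cRreig1}; your pole-cancellation checks (i)--(ii), including the observation that $D_s$ forces $|\im x|+|\im y|<a_s-b$ so the $\cK$-poles stay outside the swept strips, are exactly the content of the paper's ``claim'' and its verification. The only cosmetic differences are that you shift the contours back to $\R$ before applying the $w$-identity rather than after, and that the paper verifies the cancellation of the $w J$-poles against the zeros of $s_\de(z\mp ib)$ by an explicit case split $a_{-\de}=a_s$ versus $a_{-\de}=a_l$, which is the detail hidden in your assertion that the numerator zeros match ``exactly those members of the sequence that enter the strips.''
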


\begin{proof}
Thanks to the analyticity of $F$ in $D$ and the restriction to $D_s$, the shifts of $x$ by $\pm ia_{-\de}$ are well defined. Hence we are allowed to act with the A$\De$Os under the integral sign. Using the kernel identities \eqref{Kids} to trade the action of $A_\de(x)$ on $\cK(x,y,z)$ for that of $A_\de(z)$, we can thus rewrite the left-hand side of \eqref{Feig} as
\be
\int_\R dz\ w(z)J(z,v)\left(\frac{s_\de(z-ib)}{s_\de(z)}T_{ia_{-\de}}^z+\frac{s_\de(z+ib)}{s_\de(z)}T_{-ia_{-\de}}^z\right)\cK(x,y,z).
\ee
Note that the pole at $z=0$ due to the denominator $s_\de(z)$ is matched by a double zero of $w(z)$, cf.~\eqref{w} and the locations \eqref{Ezs} of the $G$-zeros.

Changing integration variable, we obtain
\begin{multline}
\int_{\R-ia_{-\de}}dz\ \frac{s_\de(z+ia_{-\de}-ib)}{s_\de(z+ia_{-\de})}w(z+ia_{-\de})J(z+ia_{-\de},v)\cK(x,y,z)\\
+\int_{\R+ia_{-\de}}dz\ \frac{s_\de(z-ia_{-\de}+ib)}{s_\de(z-ia_{-\de})}w(z-ia_{-\de})J(z-ia_{-\de},v)\cK(x,y,z).
\end{multline}
From the difference equations \eqref{GADeEs} satisfied by $G(z)$ it follows that we have
\be
\frac{w(z\pm ia_{-\de})}{w(z)}=\frac{s_\de(z\pm ia_{-\de})}{s_\de(z\pm ia_{-\de}\mp ib)}\frac{s_\de(z\pm ib)}{s_\de(z)},
\ee
so that the above sum of integrals equals
\begin{multline}
\int_{\R-ia_{-\de}}dz\ w(z)\frac{s_\de(z+ib)}{s_\de(z)}J(z+ia_{-\de},v)\cK(x,y,z)\\
+\int_{\R+ia_{-\de}}dz\ w(z)\frac{s_\de(z-ib)}{s_\de(z)}J(z-ia_{-\de},v)\cK(x,y,z).
\end{multline}

We claim that when we shift the contour $\R-ia_{-\de}$ in the former integral up by $a_{-\de}$ and the contour $\R+ia_{-\de}$ in the latter integral down by $a_{-\de}$, then no poles are met. (The asymptotic behaviour of $J$, as given by \eqref{Jas}--\eqref{Jasb}, and the bounds \eqref{was}--\eqref{Kas} ensure that the shift causes no problems at the tail ends.) Taking this claim for granted, the contours of the two integrals are now both equal to $\R$, so that we are entitled to invoke the eigenvalue equation \eqref{cRreig1}, which implies~\eqref{Feig}.

To complete the proof, it remains to verify the claim. Clearly, we remain within the holomorphy domain of both $J$ and $\cK$ while performing the relevant contour shifts. Letting first $a_{-\de}=a_s$, we meet only the simple poles of the factors $1/G(\pm z+ia-ib)$ at $\pm z=ib$, but they are matched by zeros of $s_\de(z\mp ib)$. Likewise, zeros of $G(\pm z+ia)$ match the simple poles of $1/s_\de(z)$ at $z=0$ and at $z=\pm ia_s$, with the latter being met when $a_s=a_l$. Thus, the first/second integrand is regular for $z\in\R - i\eta$ / $z\in\R + i\eta$ and $\eta\in[0,a_s]$.

Finally, consider the case $a_{-\de}=a_l$. Then we encounter simple poles of $1/G(\pm z+ia-ib)$ located at points $\pm z=ib+ika_s$ with $ka_s\leq a_l-b$, but they are again matched by zeros of $s_\de(z\mp ib)$. Moreover, the simple poles of $1/s_\de(z)$ at $\pm z=ka_s$ with $ka_s\leq a_l$ are matched by zeros of $G(\pm z+ia)$, so that the first/second integrand is regular for $z\in\R - i\eta$ / $z\in\R + i\eta$  and $\eta\in[0,a_l]$. Hence, our claim is proved.
\end{proof}

Keeping the assumptions in Lemma \ref{FeigL}, we now fix $y\in\R$. Then it follows that $F(x,y)$ is holomorphic in $x$ for $|\im x|<2a-b$, and that the A$\De$Es \eqref{Feig}, which take the explicit form
\be\label{FeigExp}
\frac{s_\de(x-ib)}{s_\de(x)}F(x-ia_{-\de},y)+\frac{s_\de(x+ib)}{s_\de(x)}F(x+ia_{-\de},y)=2c_\de(v)F(x,y),
\ee
hold true for $|\im x|<a_s-b$. Moreover, by the assumption $b\in (0,a_s)$, we have $2a-b>a_l$ and $a_s-b>0$. A moment's thought reveals that this state of affairs implies that $F(x,y)$ has a meromorphic continuation to all of $\C$ as a function of $x$. Indeed, multiplying \eqref{FeigExp} by $s_\de(x)/s_\de(x-ib)$, we can continue $F(x,y)$ in steps of size $a_{-\de}$ to the lower half plane. Likewise, upon multiplication by $s_\de(x)/s_\de(x+ib)$, we can continue $F(x,y)$ to the upper half plane.

Appealing to the above uniqueness result, we deduce
\be\label{Fx}
F(b_{mn},v;x,y)=\kappa(b_{mn},v,y)J(b_{mn};x,v) ,\ \ \ x,y\in\R,
\ee
under the assumption that the parameters satisfy the conditions
\be\label{parco}
v>0,\ \ \ b_{mn}\in(0,a_s),\ \ \ a_+/a_-\notin\Q.
\ee
Now due to the manifest invariance of $F(x,y)$ under the interchange $x\leftrightarrow y$, it follows that $F(x,y)$ is also meromorphic in $y$ and satisfies the $y$-version of the A$\De$Es~\eqref{FeigExp}. Therefore we also have
\be\label{Fy}
F(b_{mn},v;x,y)=\kappa(b_{mn},v,x)J(b_{mn};y,v) ,\ \ \ x,y\in\R,
\ee
with~\eqref{parco} in force. Inspecting the quotient of~\eqref{Fx} and~\eqref{Fy}, we   
 conclude that we have
\be\label{FcRcRaux}
F(b_{mn},v;x,y)=\lambda(b_{mn},v)J(b_{mn};x,v)J(b_{mn};y,v),\ \ \ x,y\in\R,
\ee
with the function $\lambda(b_{mn},v)$ to be determined. Since the $b$-values $b_{mn}$ are dense in $(0,a_s)$ when $a_+/a_-$ is irrational, (real) analyticity in $b$ now entails 
\eqref{FcRcR}. Moreover, by continuity in $a_\pm$ we can allow any positive $a_\pm$-values in~\eqref{FcRcR}. 

Until further notice, we now assume $b\in(0,a)$. Then we are allowed to set $y=ib$ in \eqref{F} with $x$ real, since this $F$-representation can be analytically continued to $|\im y|<2a-b$. Hence, using the reflection equation \eqref{Grefl} and evenness of the integrand in $z$, we arrive at  
\be\label{Fib2}
F(x,ib)=2\int_0^\infty dz \cI(x,z),\ \ \ x\in\R,\ \ b\in(0,a),
\ee
with the integrand given by
\be\label{cI}
\cI(b,v;x,z)=w(b;z)J(b;z,v)\prod_{\de_1,\de_2=+,-}G((\de_1 x+\de_2 z)/2-ib).
\ee

On the other hand, combining~\eqref{Jeval} and~\eqref{FcRcR} we obtain the special value
\be\label{Fib1}
F(b,v;x,ib)=\sqrt{a_+a_-}G(ia-2ib)\lambda(b,v)J(b;x,v)\prod_{\de=+,-}G(\de v-ia+ib),
\ee
and by \eqref{Jas}--\eqref{Jasb} we know the leading asymptotic behaviour of the right-hand side of \eqref{Fib1} as $x\to\infty$. Therefore, we can determine $\lambda(b,v)$ by computing the $x\to\infty$ asymptotics of the right-hand side of \eqref{Fib2} and comparing it with that of \eqref{Fib1}.

Substituting the right-hand side of \eqref{Gas} for $G(z)$, we find that
\be\label{Gpes}
\prod_{\de=+,-}G(\de t/2-ib)=\exp(-\alpha bt/2)(1+O(\exp(-rt))),\ \ \ t\to\infty,
\ee
where $r$ is any positive number satisfying \eqref{r}.  We now use the bounds~\eqref{was}, \eqref{Jasb} and \eqref{Gpes} with $t=x+z$  in a telescoping argument to deduce that we have
\begin{multline}\label{intgas}
w(z)J(z,v)\prod_{\de=+,-}G(\de(x+z)/2-ib)\\
=\exp(-\alpha bx/2)\big(\exp(\alpha bz/2)J_{\rm as}(z,v)+O(\exp(-rz))\big),
\end{multline}
for $z\to\infty$. (Recall that in \eqref{Jasb} we can choose the decay rate $\rho$ equal to $r$ in case $\im x=0$.) It follows, in particular, that the  left-hand side of this equality is bounded for $z\in(0,\infty)$. 

Choosing instead $t=x-z$ in \eqref{Gpes}, we 
find that the product of the remaining $G$-factors in the integrand $\cI$ decays exponentially as $x\to\infty$ and $z$ varies over a compact subset of $(0,\infty)$. This state of affairs suggests that when we substitute \eqref{intgas} in $\cI$, then the first term should yield the leading asympotic behaviour of the integral in \eqref{Fib2}. We proceed to make this suggestion precise.

\begin{lemma}
Assuming $(b,v)\in(0,a)\times (0,\infty)$, we have
\begin{multline}\label{Fas}
F(b,v;x,ib)=2\exp(-\alpha bx/2)\\
\times\left[\int_\R dz \exp(\alpha bz/2)J_{\rm as}(b;z,v)\prod_{\de=+,-}G(\de(x-z)/2-ib)+O(\exp(-r_m x/2))\right], 
\end{multline}
for $x\to\infty$, with decay rate
\be\label{rm}
r_m=\min(\alpha b/2,r).
\ee
\end{lemma}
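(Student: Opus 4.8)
The plan is to substitute the asymptotic relation~\eqref{intgas} into the integrand $\cI$ of the representation~\eqref{Fib2}, identify the leading contribution with the integral appearing in~\eqref{Fas}, and then bound every remaining piece by $\exp(-\alpha bx/2)O(\exp(-r_m x/2))$. The starting point is the factorization of the four $G$-functions in $\cI$,
\be
\prod_{\de_1,\de_2=+,-}G((\de_1 x+\de_2 z)/2-ib)=\Big[\prod_{\de=+,-}G(\de(x+z)/2-ib)\Big]\Big[\prod_{\de=+,-}G(\de(x-z)/2-ib)\Big],
\ee
which lets me write $\cI(x,z)=P(x,z)Q(x,z)$, where $P(x,z)\equiv w(z)J(z,v)\prod_{\de}G(\de(x+z)/2-ib)$ is exactly the left-hand side of~\eqref{intgas} and $Q(x,z)\equiv\prod_{\de}G(\de(x-z)/2-ib)$. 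From~\eqref{intgas} I would write $P(x,z)=\exp(-\alpha bx/2)[g(z)+\ep(z)]$, where $g(z)\equiv\exp(\alpha bz/2)J_{\rm as}(z,v)$ is a bounded oscillatory function of $z\in\R$ — this is immediate from the explicit form~\eqref{Jas}, in which the prefactor $\exp(-\alpha bz/2)$ cancels — and where the boundedness remark following~\eqref{intgas}, combined with the uniform estimate~\eqref{Jasb} at $\im x=0$, gives $|\ep(z)|\le C\exp(-rz)$ for all $z\in(0,\infty)$. Finally, applying~\eqref{Gpes} with $t=x-z$, using that $Q$ is even in $x-z$, and noting that $Q$ has no poles on the real axis for $b\in(0,a)$, I record the uniform kernel bound $|Q(x,z)|\le C\exp(-\alpha b|x-z|/2)$.

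Granting these three facts, I split
\be
\tfrac12 F(x,ib)=\int_0^\infty PQ\,dz=\exp(-\alpha bx/2)\Big[\int_0^\infty g(z)Q(x,z)\,dz+\int_0^\infty\ep(z)Q(x,z)\,dz\Big],
\ee
all integrals converging absolutely by the above bounds. In the first integral I would extend the contour from $(0,\infty)$ to $\R$, the error being the tail $\int_{-\infty}^0 gQ\,dz$. For $z<0<x$ one has $|x-z|=x-z$, so the kernel bound yields $\big|\int_{-\infty}^0 gQ\,dz\big|\le C\|g\|_\infty\exp(-\alpha bx/2)\int_{-\infty}^0\exp(\alpha bz/2)\,dz=O(\exp(-\alpha bx/2))$; since $r_m\le\alpha b/2$ this is a fortiori $O(\exp(-r_m x/2))$. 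The surviving term $\int_\R gQ\,dz$ is precisely the integral displayed in~\eqref{Fas}.

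The remaining estimate of $\int_0^\infty\ep(z)Q(x,z)\,dz$ is, I expect, the main obstacle, and it is where the specific rate~\eqref{rm} originates. Using $|\ep(z)Q(x,z)|\le C\exp(-rz)\exp(-\alpha b|x-z|/2)$ I would split the integral at $z=x$. On $(x,\infty)$ one is left with $\exp(\alpha bx/2)\int_x^\infty\exp(-(r+\alpha b/2)z)\,dz=O(\exp(-rx))$, while on $(0,x)$ one gets $\exp(-\alpha bx/2)\int_0^x\exp((\alpha b/2-r)z)\,dz$, whose size is governed by the sign of $\alpha b/2-r$. A brief case analysis then gives $O(\exp(-r_m x))$ whenever $r\ne\alpha b/2$, whereas in the borderline case $r=\alpha b/2$ a linear factor $x$ is produced, giving $O(x\exp(-r_m x))$. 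In every case the bound $O(\exp(-r_m x/2))$ holds, the halving of the exponent being exactly what absorbs the polynomial factor in the borderline case. Feeding the two $O(\exp(-r_m x/2))$ contributions back into the bracket yields~\eqref{Fas}.
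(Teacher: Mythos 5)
Your argument is correct, and it reaches the lemma with the same key ingredients as the paper (the expansion \eqref{intgas}, the $G$-product bound \eqref{Gpes}, and boundedness of $\exp(\alpha bz/2)J_{\rm as}(b;z,v)$), but with genuinely different bookkeeping. The paper splits the \emph{domain} at $z=x/2$: it substitutes \eqref{intgas} only on $(x/2,\infty)$ (cost $O(\exp(-rx/2))$), bounds the full integrand on $(0,x/2)$ by $C\exp(-\alpha bx/2)\prod_{\de}G(\de(x-z)/2-ib)$ and invokes \eqref{Gintb} (cost $O(\exp(-\alpha bx/4))$), and extends the main term from $(x/2,\infty)$ to $\R$ at the same cost; the two rates $rx/2$ and $\alpha bx/4$ are precisely the origin of \eqref{rm}. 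You instead decompose the \emph{integrand} once and for all on $(0,\infty)$ and control the error term by the convolution estimate $\int_0^\infty \exp(-rz)\exp(-\alpha b|x-z|/2)\,dz$, split at $z=x$. This buys a sharper remainder, $O(\exp(-r_m x))$ — or $O(x\exp(-r_m x))$ in the borderline case $r=\alpha b/2$ — which a fortiori gives the stated $O(\exp(-r_m x/2))$; in your proof the halved exponent serves only to absorb the borderline factor $x$, whereas in the paper it is forced by the split at $x/2$. Your extension of the main term to $\R$ also costs only $O(\exp(-\alpha bx/2))$ rather than $O(\exp(-\alpha bx/4))$. One point to tighten: the bound $|\ep(z)|\le C\exp(-rz)$ for \emph{all} $z\in(0,\infty)$, uniformly in large $x$, does not follow verbatim from \eqref{intgas} (an asymptotic relation as $z\to\infty$) together with \eqref{Jasb} (which requires the real part of the relevant variable to exceed some $\de>0$); for $z$ in a bounded interval you need continuity of $w(z)J(b;z,v)$ plus the decay of $\prod_{\de}G(\de(x+z)/2-ib)$ in $x+z$ to conclude that $\ep$ is uniformly bounded there. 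Since the paper makes the very same implicit step when it applies \eqref{intgas} on all of $(0,x/2)$, this is a finishing touch rather than a gap.
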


\begin{proof}
Taking $x\in(0,\infty)$ from now on, we multiply~\eqref{intgas} by $\prod_{\de=+,-}G(\de(x-z)/2-ib)$ and integrate $z$ over $(x/2,\infty)$. Since this multiplier function is bounded, we deduce that we have 
\begin{multline}\label{crux}
\int_{x/2}^\infty dz\cI(x,z)=\exp(-\alpha bx/2)\\
\times\left[\int_{x/2}^\infty dz \exp(\alpha bz/2)J_{\rm as}(z,v)\prod_{\de=+,-}G(\de(x-z)/2-ib)+O(\exp(-rx/2))\right],
\end{multline}
for $x\to\infty$. 

The point is now that we want to arrive at the integral over all of $\R$ in~\eqref{Fas}, since it can be evaluated explicitly (as we shall presently show, cf.~\eqref{Gint1}--\eqref{Gint2}). In view of~\eqref{Fib2}, we first need to add the integral of~$\cI(x,z)$ over $(0,x/2)$ to the left-hand side of~\eqref{crux}, so as to obtain $F(x,ib)/2$. To estimate this addition, we use~\eqref{intgas} as before. But now we conclude from boundedness of the function $\exp(\alpha b z/2)J_{\rm as}(z,v)$ for $z>0$ (cf.~\eqref{Jas}) that there exists a positive constant $C$ such that
\be
\left|\int_0^{x/2}dz\cI(x,z)\right|<C\exp(-\alpha bx/2)\int_0^{x/2}dz\prod_{\de=+,-}G(\de(x-z)/2-ib),\ \ \ x\in(0,\infty).
\ee
(Note that the $G$-product in \eqref{Gpes} is positive for $t\in\R$, cf.~the conjugacy relation~\eqref{Gconj} and the locations \eqref{Ezs} of the $G$-zeros.)

Next, we invoke~\eqref{Gpes} with $t=x-z$ to deduce
\be\label{Gintb}
\int_\xi^{x/2}dz\prod_{\de=+,-}G(\de(x-z)/2-ib)=O(\exp(-\alpha bx/4)),\ \ \ x\to\infty,
\ee
where the implied constant is uniform for $\xi$ varying over any fixed interval of the form $(-\infty,c)$, $c\in\R$. In particular, for $\xi=0$ we arrive at
\be
\exp(\alpha bx/2)\int_0^{x/2}dz\cI(x,z)=O(\exp(-\alpha bx/4)),\ \ \ x\to\infty,
\ee
but the bound~\eqref{Gintb} also entails
\be
\int_{-\infty}^{x/2} dz \exp(\alpha bz/2)J_{\rm as}(z,u)\prod_{\de=+,-}G(\de(x-z)/2-ib)=O(\exp(-\alpha bx/4)), 
\ee
as $x\to\infty$. From this the lemma readily follows. 
\end{proof}

Taking $z\to x+2z$ and keeping \eqref{Jas} in mind, a straightforward computation reveals that the integral in \eqref{Fas} equals
\be\label{Gint1}
2\exp(\alpha bx/2)J_{\rm as}(x,v)\int_\R dz\exp(i\alpha zv)\prod_{\de=+,-}G(\de z-ib).
\ee
An explicit evaluation of the latter integral is readily obtained from a special case of a Fourier transform formula in \cite{R11}, as reviewed in Appendix \ref{AppA}. More specifically, setting $\nu=-\mu=ib$ in \eqref{Fform}, we obtain
\be\label{Gint2}
\int_\R dz\exp(i\alpha zv)\prod_{\de=+,-}G(\de z-ib)=\sqrt{a_+a_-}G(ia-2ib)\prod_{\de=+,-}G(\de v-ia+ib).
\ee
Substituting these results into \eqref{Fas}, we deduce 
\be
F(b,v;x,ib)\sim 4\sqrt{a_+a_-}G(ia-2ib)J_{\rm as}(b;x,v)\prod_{\de=+,-}G(\de v-ia+ib),\ \ \ x\to\infty.
\ee
Comparing this expression for the leading asymptotic behaviour of $F(b,v;x,ib)$ as $x\to\infty$ with that given by \eqref{Fib1} upon substituting $J_{\rm as}$ for $J$, we find that
\be
\lambda(b,v)=4,\ \ \ b\in(0,a),\ \ v>0.
\ee
By (real) analyticity this equality immediately extends to all $b\in(0,2a)$. Since the integrand \eqref{I} in \eqref{F} is an even function of $z$, we have thus established the following result.

\begin{theorem}
Letting $b\in(0,2a)$  and $ x,y,v\in\R$, we have
\be\label{prform}
J(b;x,v)J(b;y,v)=\frac{1}{2}\int_0^\infty dz\, w(b;z)J(b;z,v)\prod_{\de_1,\de_2,\de_3=+,-}G((\de_1 x+\de_2 y+\de_3 z-ib)/2).
\ee
\end{theorem}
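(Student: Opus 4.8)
The plan is to realize the product $J(b;x,v)J(b;y,v)$ as an element of a one-dimensional solution space of analytic difference equations, and then to fix the overall normalization by an asymptotic comparison. Concretely, I would work with the function $F(b,v;x,y)$ defined by \eqref{F}, whose integrand $I$ of \eqref{I} decays exponentially in $z$ by the estimates \eqref{was}--\eqref{Kas} together with the asymptotics \eqref{Jas}--\eqref{Jasb}; thus $F$ is well defined and holomorphic on the domain $D$ given by $|\im x|+|\im y|<2a-b$. The target is the proportionality \eqref{FcRcR} with the specific value $\lambda(b,v)=4$. Once this is secured, evenness of $I$ in $z$ turns $\int_\R$ into $2\int_0^\infty$, and combining this factor $2$ with $\lambda=4$ produces exactly the prefactor $1/2$ of \eqref{prform}.

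To get the product structure up to a constant, I would first establish that $F$ is a joint eigenfunction of $A_\de(x)$ with eigenvalue $2c_\de(v)$, which is the content of Lemma~\ref{FeigL}. On the subdomain $D_s$ the shifts of $x$ by $\pm ia_{-\de}$ stay inside $D$, so one may act with $A_\de(x)$ under the integral sign; the kernel identities of Proposition~2.1 then allow the action on $\cK$ in the variable $x$ to be traded for the action in $z$, after which a contour shift, the difference equations for $w$, and the eigenvalue equation \eqref{cRreig1} for $J(z,v)$ give the claim---the one subtle point being the check that the contour shifts cross no poles. The manifest $x\leftrightarrow y$ symmetry of $\cK$ gives the analogous statement in $y$. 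Invoking the one-dimensionality of the space of even meromorphic joint solutions of \eqref{feig} at the special values $b=b_{mn}$ (with $a_+/a_-$ irrational, so that these are dense in $(0,a_s)$) forces $F$ to be proportional to $J(b;x,v)$ in $x$ and to $J(b;y,v)$ in $y$; comparing the two representations yields \eqref{FcRcRaux}, and real-analyticity in $b$ followed by continuity in $a_\pm$ promotes this to \eqref{FcRcR} with an as-yet-unknown factor $\lambda(b,v)$.

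The decisive and hardest step is to show $\lambda(b,v)=4$, which I would do by evaluating $F(b,v;x,ib)$ in two ways and matching their $x\to\infty$ asymptotics. On the algebraic side, the explicit value \eqref{Jeval} of $J(b;ib,v)$ converts \eqref{FcRcR} into the closed form \eqref{Fib1}, whose leading behaviour as $x\to\infty$ is controlled by $J_{\rm as}$. On the analytic side, I would extract the asymptotics straight from the integral \eqref{Fib2}: splitting the contour at $z=x/2$ and using the telescoping estimate \eqref{intgas}, one shows that the tail over $(x/2,\infty)$ dominates while the piece over $(0,x/2)$ is of strictly lower order, so that $F(b,v;x,ib)$ is asymptotic to the full $\R$-integral appearing in Lemma~2.3; that integral is then evaluated in closed form via the substitution $z\to x+2z$ and the Fourier identity \eqref{Gint2}. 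Equating the two leading terms forces $\lambda=4$ for $b\in(0,a)$, and real-analyticity extends this to all of $(0,2a)$. The genuine work---and the main obstacle---lies entirely in this asymptotic bookkeeping: controlling the error terms uniformly and verifying that the truncated integrals decay at the rate $r_m$ of \eqref{rm}; the remainder of the argument is symmetry plus the previously established uniqueness and continuity results.
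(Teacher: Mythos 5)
Your proposal is correct and follows essentially the same route as the paper: the same function $F$ of \eqref{F}, the same eigenfunction argument via the kernel identities and Lemma~\ref{FeigL}, the same uniqueness/density step at the values $b_{mn}$ leading to \eqref{FcRcR}, and the same determination of $\lambda=4$ by comparing the two asymptotic evaluations of $F(b,v;x,ib)$ using \eqref{Jeval}, the splitting at $z=x/2$ with the telescoping estimate \eqref{intgas}, and the Fourier evaluation \eqref{Gint2}. Nothing essential is missing or different.
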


We proceed to obtain a second product formula that looks quite different at face value.

\begin{theorem}
Letting $b\in(0,2a)$  and $ x,t,u\in\R$, we have
\bea\label{prformalt}
J(b;x,t)J(b;x,u) & = & \frac{1}{2}G(ia-ib)^2\int_0^\infty dv\,  w(2a-b;v)J(b;x,v)
\\ \nonumber
& & \times \prod_{\de_1,\de_2,\de_3=+,-}G((\de_1 t+\de_2 u+\de_3 v+ib)/2-ia).
\eea
\end{theorem}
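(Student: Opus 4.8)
The plan is to derive this second product formula directly from the first one, i.e.~from~\eqref{prform}, by exploiting the $J$-symmetry~\eqref{Jsym}; no new analysis is needed. The key observation is that~\eqref{prform} is valid for \emph{every} $b\in(0,2a)$, hence in particular for the reflected parameter $2a-b$, which again lies in $(0,2a)$. Applying~\eqref{prform} at parameter $2a-b$ yields
\begin{multline*}
J(2a-b;x,v)J(2a-b;y,v)\\
=\frac{1}{2}\int_0^\infty dz\, w(2a-b;z)J(2a-b;z,v)\prod_{\de_1,\de_2,\de_3=+,-}G((\de_1 x+\de_2 y+\de_3 z-i(2a-b))/2).
\end{multline*}

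First I would rewrite each of the three factors $J(2a-b;\,\cdot\,,\,\cdot\,)$ in terms of the $J$-function at parameter $b$. Solving~\eqref{Jsym} for the swapped-argument function gives $J(2a-b;p,q)=G(ia-ib)^{-2}J(b;q,p)$, and applying this to $J(2a-b;x,v)$, $J(2a-b;y,v)$ and $J(2a-b;z,v)$ converts the left-hand side into $G(ia-ib)^{-4}J(b;v,x)J(b;v,y)$ and produces a single factor $G(ia-ib)^{-2}$ inside the integral on the right. Clearing the prefactor by multiplying through by $G(ia-ib)^{4}$ then leaves exactly one factor $G(ia-ib)^{2}$ in front of the integral, matching the prefactor in~\eqref{prformalt}.

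Next I would simplify the argument of the $G$-product using the elementary identity $-i(2a-b)/2=ib/2-ia$, so that $(\de_1 x+\de_2 y+\de_3 z-i(2a-b))/2=(\de_1 x+\de_2 y+\de_3 z+ib)/2-ia$; this reproduces the kernel appearing in~\eqref{prformalt}. Since the resulting integrand $w(2a-b;z)J(b;v,z)\prod G(\cdots)$ is even in the integration variable $z$ (by the same parity argument used for the integrand~\eqref{I} of~\eqref{prform}: $w$ is even in $z$, $J(b;v,z)$ is even in its second slot, and the $G$-product is even through the sum over $\de_3$), the integral over $(0,\infty)$ is well defined. Finally, relabelling the free variables by $v\mapsto x$, $x\mapsto t$, $y\mapsto u$ and the integration variable $z\mapsto v$ casts the identity into the precise form~\eqref{prformalt}.

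Because every step is a substitution or a bookkeeping simplification, there is no genuine analytic obstacle; the only points requiring care are the tracking of the powers of $G(ia-ib)$ — one conversion factor from each of the three $J$-functions, with the net effect of leaving a single $G(ia-ib)^{2}$ — and the correct reading of the argument shift induced by $b\mapsto 2a-b$. I would emphasise that one does \emph{not} invoke any symmetry of $J$ under interchanging its two arguments (which fails at fixed $b$); the relabelling alone aligns the first slots of $J(b;v,x)$, $J(b;v,y)$ and $J(b;v,z)$ with those of $J(b;x,t)$, $J(b;x,u)$ and $J(b;x,v)$, respectively.
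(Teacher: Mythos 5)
Your proposal is correct and follows exactly the paper's own proof: substitute $b\to 2a-b$ in~\eqref{prform}, convert each $J(2a-b;\cdot,\cdot)$ via the symmetry~\eqref{Jsym}, and relabel variables, with the bookkeeping of the $G(ia-ib)$ powers (net factor $G(ia-ib)^2$ after clearing) exactly as you describe. The only superfluous element is the parity remark about the integration variable, since the integral over $(0,\infty)$ is inherited directly from~\eqref{prform} and needs no separate justification.
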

\begin{proof}
We take $b\to 2a-b$ in~\eqref{prform} and then use the symmetry relation~\eqref{Jsym}. Relabeling variables, we see that~\eqref{prformalt} results.
\end{proof}

It is of interest to specialize these formulas to the free cases $b=a_{\pm}$. From~Eq.~(3.4) in~\cite{HR14} we readily obtain
\be
J(a_{\de};x,\hat{x})=\frac{a_{-\de}\sin (\alpha x\hat{x}/2)}{2s_{-\de}(x)s_{\de}(\hat{x})},\ \ \ \de=+,-.
\ee
Also, using~\eqref{GADeEs} we get from~\eqref{w} and~\eqref{c} the free weight functions
\be
w(a_{\de};z)=4s_{-\de}(z)^2,\ \ \ \de=+,-.
\ee
Taking $b=a_{+}$ in \eqref{prform}, we deduce
\be\label{prodap}
\int_0^\infty dz \frac{s_-(z)\sin(\alpha zv/2)}{\prod_{\de,\de'=+,-}c_-((z+\de x+\de' y)/2)}=\frac{4a_-}{s_+(v)}\cdot \frac{\sin(\alpha xv/2)}{s_-(x)}\cdot \frac{\sin(\alpha yv/2)}{s_-(y)}.
\ee
Furthermore, when we take $b=a_+$ in \eqref{prformalt} and use $G(i(a_- -a_+)/2)^2=a_-/a_+$, then we obtain~\eqref{prodap} with $a_+$ and $a_-$ swapped.

It seems that even this elementary hyperbolic product formula is a new result. With hindsight, however, it can be obtained from a limit of the elliptic product formula in Theorem~2.2 of~\cite{R13}. (To verify this, the limit formulas Eqs.~(2.92), (3.129) and (3.131) in~\cite{R97} can be used.)

We proceed to detail another perspective on the above product formulas. This arises when we rewrite them in terms of the unitary transform kernel $\rF(b;x,y)/(2a_+a_-)^{1/2}$, cf.~\eqref{cF}--\eqref{rFsym}. Let us define a family of integral operators $\cI_z(b)$ on $L^2((0,\infty))$  by
\be\label{cIb}
(\cI_z(b)f)(x)\equiv \int_0^{\infty} dy\, w(b;x)^{1/2}\cK(b;x,y,z)w(b;y)^{1/2}f(y),\ \ \ z\ge 0,\ \ \ b\in(0,2a),
\ee
where $x>0$ and positive square roots are taken. It follows from the $G$-asymptotics \eqref{Gas} that for fixed $(b,x,z)\in (0,2a)\times [0,\infty)^2$ the function $\cK(b;x,y,z)w(b;y)^{1/2}$ has exponential decay for $y\to \infty$, so the integral is absolutely convergent for any $f\in L^2((0,\infty))$. At face value, however, it is not clear that the right-hand side of \eqref{cIb} yields a function in $L^2((0,\infty),dx)$. 
Furthermore, when we fix~$b$,  there appears to be no reason for all of the operators $\cI_z(b)$, $z\ge 0$, to commute. 

Even so, more is true: They are bounded self-adjoint operators satisfying
\be\label{cIcom}
[\cI_u(b), \cI_v(b)]=0,\ \  [ \cI_u(b),  \cI_v(2a-b)]=0,\ \ \ u,v\ge 0.
\ee
This is immediate from the following result, which shows that the two families are simultaneously diagonalized by the unitary involution~$\cF(b)$, yielding bounded real-valued multiplication operators.

\begin{theorem}
Let $b\in(0,2a)$ and $z\ge 0$. Then the operator~$\cI_z(b)$ is bounded, and the operator $\cF(b)\cI_z(b)\cF(b)$ on $L^2((0,\infty),dv)$ acts as multiplication by the function $2J(b;z,v)$. Moreover,
 the operator $\cF(b)\cI_z(2a-b)\cF(b)$ on $L^2((0,\infty),dv)$ acts as multiplication by $2J(2a-b;z,v)$.
\end{theorem}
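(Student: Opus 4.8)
The plan is to exhibit $\rF(b;\cdot,v)$, for each fixed $v>0$, as a generalized eigenfunction of the integral operator $\cI_z(b)$ with eigenvalue $2J(b;z,v)$, and then to exploit that $\cF(b)$ is a unitary involution whose kernel is $\rF(b;x,y)/\sqrt{2a_+a_-}$. Concretely, I would first establish the pointwise identity
\be\label{eigid}
\int_0^\infty dy\, w(b;x)^{1/2}\cK(b;x,y,z)w(b;y)^{1/2}\rF(b;y,v)=2J(b;z,v)\rF(b;x,v),\quad x,v>0.
\ee
Granting this, the asserted diagonalization drops out: substituting the kernel of $\cF(b)$ into $\cI_z(b)\cF(b)f$ for $f\in\cC$, interchanging the $y$- and $v$-integrations, and applying \eqref{eigid} to the inner $y$-integral turns $\cI_z(b)\cF(b)f$ into $\cF(b)$ applied to $2J(b;z,\cdot)f$. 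Since $\cF(b)^2=I$, this says precisely that $\cF(b)\cI_z(b)\cF(b)$ acts as multiplication by $2J(b;z,\cdot)$.

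The identity \eqref{eigid} is where the product formula of Theorem~2.4 enters. Using \eqref{rFJ} to write $w(b;y)^{1/2}\rF(b;y,v)=(a_+a_-)^{-1/2}G(ib-ia)w(2a-b;v)^{1/2}w(b;y)J(b;y,v)$, the left-hand side of \eqref{eigid} becomes $w(b;x)^{1/2}(a_+a_-)^{-1/2}G(ib-ia)w(2a-b;v)^{1/2}$ times $\int_0^\infty dy\,\cK(b;x,y,z)w(b;y)J(b;y,v)$. Because $\cK$ is invariant under permutations of the three variables $x,y,z$, I may replace $\cK(b;x,y,z)$ by $\cK(b;x,z,y)$ and read the remaining $y$-integral as an instance of \eqref{prform}, with $y$ as the integration variable and $(x,z)$ as the two fixed arguments; this evaluates it to $2J(b;x,v)J(b;z,v)$. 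Reassembling the prefactors and invoking \eqref{rFJ} once more to recognize $(a_+a_-)^{-1/2}G(ib-ia)w(b;x)^{1/2}J(b;x,v)w(2a-b;v)^{1/2}=\rF(b;x,v)$ then yields \eqref{eigid}.

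It remains to set up the functional-analytic framework. Since $J$ is real-valued for real arguments by the conjugacy relation \eqref{Gconj}, and since, by \eqref{Jsym} combined with the asymptotics \eqref{Jas}--\eqref{Jasb}, the function $v\mapsto J(b;z,v)$ decays as $v\to\infty$ while being continuous and finite at $v=0$, it is bounded on $(0,\infty)$; hence multiplication by $2J(b;z,\cdot)$ is a bounded self-adjoint operator $M_z(b)$ on $L^2((0,\infty))$, and so is $\cF(b)M_z(b)\cF(b)$. The computation sketched above shows $\cI_z(b)\cF(b)f=\cF(b)M_z(b)f$ for all $f\in\cC$; as $\cC$ is dense and $\cF(b)$ is unitary and involutory, this identifies $\cI_z(b)$, defined by \eqref{cIb}, with the bounded operator $\cF(b)M_z(b)\cF(b)$ on all of $L^2((0,\infty))$, proving both the boundedness claim and the diagonalization. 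I expect the main obstacle to be the rigorous justification of the interchange of the $y$- and $v$-integrations: this needs absolute convergence of the resulting double integral, which I would control with the uniform $G$-asymptotics \eqref{Gas} and the ensuing bounds \eqref{was}--\eqref{Kas} for $w$ and $\cK$, together with the compact support of $f\in\cC$; some care is also required in passing from the pointwise identity on $\cC$ to an operator identity on all of $L^2((0,\infty))$.

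Finally, the statement for $\cI_z(2a-b)$ follows with no extra work: applying the already-established first part with $b$ replaced by $2a-b\in(0,2a)$ shows that $\cF(2a-b)\cI_z(2a-b)\cF(2a-b)$ acts as multiplication by $2J(2a-b;z,\cdot)$, and the symmetry \eqref{cFsym} identifies $\cF(2a-b)$ with $\cF(b)$.
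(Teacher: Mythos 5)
Your proposal is correct and follows essentially the same route as the paper's proof: your identity \eqref{eigid} is precisely the paper's equation \eqref{cIeig}, obtained by combining the product formula \eqref{prform} (with the roles of the integration variable and $z$ swapped) with the relation \eqref{rFJ}, after which integrating against $f\in\cC$ and using the unitary involution property of $\cF(b)$ together with \eqref{cFsym} yields both assertions. The only difference is that you spell out the functional-analytic details (boundedness of $2J(b;z,\cdot)$ via \eqref{Jsym} and \eqref{Jas}--\eqref{Jasb}, the Fubini interchange, and the density argument) that the paper compresses into ``from this we read off the first assertion.''
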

\begin{proof}
When we swap $z$ and $y$ in~\eqref{prform} and use~\eqref{rFJ}, we obtain
\be\label{cIeig}
2J(b;z,v)\frac{\rF(b;x,v)}{\sqrt{2a_+a_-}}=\int_0^{\infty}dy \, w(b;x)^{1/2}\cK(b;x,y,z)w(b;y)^{1/2}\frac{\rF(b;y,v)}{\sqrt{2a_+a_-}}.
\ee 
Integrating this with $f(v)\in C_0^{\infty}((0,\infty))$ we obtain
\be\label{cFf}
2\cF(b)J(b;z,\cdot)f=\cI_z(b)\cF(b)f.
\ee 
From this we read off the first assertion. Using~\eqref{cFsym}, we obtain the second one. 
\end{proof}

Yet another illuminating equivalent version of the product formula~\eqref{prform} is obtained when we multiply~\eqref{cIeig} by  $\rF(b;t,v)/\sqrt{2a_+a_-}$ and integrate over~$v$. This results in
\be
\frac{1}{a_+a_-}\int_0^{\infty}dvJ(b;z,v)\rF(b;x,v)\rF(b;t,v)=w(b;x)^{1/2}\cK(b;x,t,z)w(b;t)^{1/2}.
\ee
Using \eqref{rFJ}, this can be rewritten as the identity
\begin{multline}\label{id1}
w(b;x)^{1/2}w(b;y)^{1/2}w(b;z)^{1/2}\cK(b;x,y,z)=
\\
\frac{G(ia-ib)}{\sqrt{a_+a_-}}\int_0^{\infty}\frac{dv}{w(2a-b;v)^{1/2}}\rF(b;x,v)\rF(b;y,v)\rF(b;z,v).
\end{multline}
(Alternatively, this identity follows when we replace~$f$ in~\eqref{cFf} by $\cF(b)f$.) From the $b\to 2a-b$ invariance \eqref{rFsym} of the $\rF$-function, we also obtain
\begin{multline}\label{id2}
w(2a-b;x)^{1/2}w(2a-b;y)^{1/2}w(2a-b;z)^{1/2}\cK(2a-b;x,y,z)=
\\
\frac{G(ib-ia)}{\sqrt{a_+a_-}}\int_0^{\infty}\frac{dv}{w(b;v)^{1/2}}\rF(b;x,v)\rF(b;y,v)\rF(b;z,v).
\end{multline}




\section{An application to the hyperbolic relativistic \\ Calogero-Moser 2-particle system}
In the recent paper \cite{HR14} we developed a recursive scheme to construct joint eigenfunctions for the commuting A$\De$Os associated with the integrable $N$-particle systems of hyperbolic relativistic Calogero-Moser type. In this section we establish a remarkable application of the product formula \eqref{prform} from Theorem~2.4  to the $N=2$ case of this recursive scheme. More specifically, we  show that the joint eigenfunction of the A$\De$Os is also an eigenfunction of an explicit integral operator, with the eigenvalues being explicit as well.

The kernel of the pertinent integral operator is the product of the weight function $w(b;y_1-y_2)$ and the special function
\be\label{cS}
\cS_2(b;x,y)\equiv \prod_{j,k=1}^2 \frac{G(x_j-y_k-ib/2)}{G(x_j-y_k+ib/2)},
\ee
which connects the $2$-particle A$\De$Os $A_{k,\de}^{(2)}(x)$ to themselves via kernel identities, see (2.2) in \cite{HR14}. For the first step $N=1\to N=2$ of the recursive scheme, however, the main protagonist is a kernel function connecting these $N=2$ A$\De$Os to the elementary $N=1$ A$\De$Os $A_{1,\de}^{(1)}(-y_1)\equiv\exp(ia_{-\de}\partial_{y_1})$. The latter arises from $\cS_2$ by first multiplying by a suitable plane wave and then letting $y_2$ go to infinity, yielding
\be
\cS_2^\sharp(b;x,y_1)\equiv \prod_{j=1}^2 \frac{G(x_j-y_1-ib/2)}{G(x_j-y_1+ib/2)}.
\ee

Starting the recursion with the plane wave
\be
J_1(x_1,y_1)\equiv \exp(i\alpha x_1y_1),
\ee
the first step $N=1\to N=2$ of the recursive scheme yields the function
\be\label{J2}
J_2(b;x,y)=\exp(i\alpha y_2(x_1+x_2))\int_\R dz I_2(b;x,y,z),\ \ \ (b,x,y)\in(0,2a)\times\R^2\times\R^2,
\ee
with integrand
\be
I_2(b;x,y,z)\equiv \cS_2^\sharp(b;x,z)J_1(z,y_1-y_2).
\ee
We now recall from Section 4 in \cite{HR14} the relation of~$J_2$ and~$J$. It arises by taking $z\to z+(x_1+x_2)/2$ in the integral in \eqref{J2} and comparing the result with the formula~\eqref{Jrep} for~$J$: 
\be\label{J2J}
J_2(b;(x_1,x_2),(y_1,y_2))=\exp(i\alpha(x_1+x_2)(y_1+y_2)/2)J(b;x_1-x_2,y_1-y_2).
\ee

We are now ready to formulate and prove the integral equation for~$J_2$.

\begin{theorem}
Letting $(b,x,y)\in (0,2a)\times\R^2\times \R^2$, we have the integral equation
\be\label{intEq}
\int_{\R^2} dz\, w(b;z_1-z_2)\cS_2(b;x,z)J_2(b;z,y)=2\mu(b;y)J_2(b;x,y),
\ee
where
\be\label{defmu}
\mu(b;y)\equiv a_+a_-G(ia-ib)^2\prod_{j=1}^2\prod_{\de=+,-}G(\de y_j+ib/2-ia).
\ee
\end{theorem}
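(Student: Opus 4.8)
The plan is to reduce \eqref{intEq} to the center-of-mass product formula \eqref{prform} of Theorem~2.4, followed by an explicit Fourier evaluation of the $J$-function. The starting observation is an exact identity expressing the $2$-particle kernel $\cS_2$ in terms of the reduced kernel $\cK$. Applying the reflection equation \eqref{Grefl} in the form $1/G(t+ib/2)=G(-t-ib/2)$ to each factor of \eqref{cS} gives
\be
\cS_2(b;x,z)=\prod_{j,k=1}^2\prod_{\de=+,-}G(\de(x_j-z_k)-ib/2),
\ee
and matching the eight resulting arguments against those in \eqref{K} shows that
\be
\cS_2(b;x,z)=\cK\big(b;x_1-x_2,\,z_1-z_2,\,(x_1+x_2)-(z_1+z_2)\big).
\ee
This exhibits $\cK$ as the center-of-mass reduction of $\cS_2$ and is what couples the two-particle problem to Theorem~2.4.

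Next I would introduce relative and center-of-mass variables $\zeta=z_1-z_2$, $Z=(z_1+z_2)/2$ in the $z$-integration (with $dz_1\,dz_2=d\zeta\,dZ$), and use \eqref{J2J} to write $J_2(b;z,y)=\exp(i\alpha Z(y_1+y_2))J(b;\zeta,y_1-y_2)$. Writing $\xi=x_1-x_2$, $\eta=y_1-y_2$, $P=y_1+y_2$, the third slot of $\cK$ equals $(x_1+x_2)-2Z$, so replacing $Z$ by $s\equiv(x_1+x_2)-2Z$ splits off exactly the plane-wave factor $\exp(i\alpha(x_1+x_2)P/2)$ of $J_2(b;x,y)$ and produces a Fourier kernel $\exp(-i\alpha sP/2)$. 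The left-hand side of \eqref{intEq} then becomes
\be
\tfrac12\exp(i\alpha(x_1+x_2)P/2)\int_\R ds\,e^{-i\alpha sP/2}\int_\R d\zeta\,w(b;\zeta)J(b;\zeta,\eta)\cK(b;\xi,\zeta,s).
\ee
By evenness in $\zeta$ the inner integral is twice the $(0,\infty)$-integral of \eqref{prform} (with $x\to\xi$, $y\to s$, $v\to\eta$, using the full symmetry of $\cK$), hence collapses to $4J(b;\xi,\eta)J(b;s,\eta)$.

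It then remains to show that $\int_\R ds\,J(b;s,\eta)\,e^{-i\alpha sP/2}=\mu(b;y)$, after which the prefactors combine as $\tfrac12\cdot4=2$ and reconstitute $2\mu(b;y)J_2(b;x,y)$. For this I would insert the defining representation \eqref{Jrep} and pass from $(s,z)$ to $u=z+s/2$, $w=z-s/2$; since $\eta-P=-2y_2$ and $\eta+P=2y_1$, the exponent becomes $i\alpha(w y_1-u y_2)$ and the double integral factorizes into
\be
\left(\int_\R du\,\frac{G(u-ib/2)}{G(u+ib/2)}e^{-i\alpha u y_2}\right)\left(\int_\R dw\,\frac{G(w-ib/2)}{G(w+ib/2)}e^{i\alpha w y_1}\right).
\ee
Rewriting $G(t-ib/2)/G(t+ib/2)=\prod_{\de=+,-}G(\de t-ib/2)$ and invoking the Fourier transform formula \eqref{Fform} (equivalently \eqref{Gint2} with $b$ replaced by $b/2$, using evenness for the $u$-factor) evaluates each factor as $\sqrt{a_+a_-}\,G(ia-ib)\prod_{\de=+,-}G(\de y_j+ib/2-ia)$, $j=1,2$; their product is precisely $\mu(b;y)$ of \eqref{defmu}.

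The substance of the proof is analytic rather than algebraic: I must justify splitting the $(z_1,z_2)$-integral into the iterated form above (so that \eqref{prform} may be applied under the $s$-integral) and the analogous interchange in the final Fourier evaluation. Both follow from absolute convergence, which I would establish from the hyperbolic gamma asymptotics \eqref{Gas} together with \eqref{was}--\eqref{Kas} and \eqref{Jas}--\eqref{Jasb}. The delicate point is the joint decay of the integrand: the growth $|w(b;\zeta)|\sim\exp(\alpha b|\zeta|)$ is only half-cancelled by $|J(b;\zeta,\eta)|\sim\exp(-\alpha b|\zeta|/2)$, leaving $\exp(\alpha b|\zeta|/2)$, while $|\cK(b;\xi,\zeta,s)|\sim\exp(-\alpha b\max(|\zeta|,|s|))$ decays in the radial direction of the $(\zeta,s)$-plane. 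Splitting into the regions $|s|\le|\zeta|$ and $|s|>|\zeta|$ shows the product is dominated by $\exp(-\alpha b|\zeta|/2)$ in both, so the double integral converges absolutely and Fubini applies; this is the main obstacle and the step demanding genuine care.
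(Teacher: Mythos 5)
Your proposal is correct and follows essentially the same route as the paper's proof: pass to sum/difference variables so that $\cS_2$ becomes the kernel $\cK$ of Theorem~2.4, apply the product formula \eqref{prform}, and then evaluate the remaining Fourier integral $\int_\R ds\, J(b;s,y_1-y_2)e^{\mp i\alpha s(y_1+y_2)/2}$ by inserting \eqref{Jrep}, factorizing into two one-variable integrals, and invoking \eqref{Fform} with $\nu=-\mu=ib/2$, exactly as in the paper. Your closing Fubini discussion is extra care the paper omits (note only that in the region $|s|>|\zeta|$ you should quote the bound $\exp(\alpha b|\zeta|/2-\alpha b|s|)\le\exp(-\alpha b|s|/2)$ rather than $\exp(-\alpha b|\zeta|/2)$, since the latter alone is not integrable over the plane); with that one-line adjustment the justification is complete.
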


\begin{proof}
First, we aim to rewrite the left-hand side of \eqref{intEq} in such a way that the product formula \eqref{prform} from Theorem~2.4 may be invoked. Substituting the right-hand side of~\eqref{J2J} for~$J_2$ and changing integration variables to
\be\label{zs}
z\equiv z_1-z_2,\ \ \ s\equiv z_1+z_2,
\ee
we use the reflection equation \eqref{Grefl} to rewrite the left-hand side of~\eqref{intEq} as
\begin{multline}
\frac{1}{2}\int_\R ds \exp(i\alpha s(y_1+y_2)/2)\\
\times\int_\R dz w(z) J(z,y_1-y_2)\prod_{\de_1,\de_2,\de_3=+,-}G\big((\de_1(x_1-x_2)+\de_2 (s-(x_1+x_2))+\de_3 z-ib)/2\big).
\end{multline}
Taking $s\to s+x_1+x_2$ and using evenness of the latter integrand in $z$, we deduce that this equals
\begin{multline}
\exp(i\alpha (x_1+x_2)(y_1+y_2)/2)\int_\R ds \exp(i\alpha s(y_1+y_2)/2)\\
\times\int_0^\infty dz w(z)J(z,y_1-y_2)\cK(x_1-x_2,s,z),
\end{multline}
where, just as in Section \ref{Sec2}, the kernel function $\cK$ is given by \eqref{K}. Invoking the product formula \eqref{prform} as well as~\eqref{J2J}, we obtain
\be
2J_2(x,y)\int_\R ds \exp(i\alpha s(y_1+y_2)/2)J(s,y_1-y_2).
\ee

Next, we show that the representation \eqref{Jrep} and the Fourier transform formula \eqref{Fform} allow us to compute the remaining integral explicitly. Changing integration variable according to $z\to z/2$ in \eqref{Jrep}, and once more making use of the reflection equation \eqref{Grefl}, we find that the integral is given by
\be
\frac{1}{2}\int_\R ds\int_\R dz \exp\big(i\alpha(s(y_1+y_2)+z(y_1-y_2))/2\big)\prod_{\de_1,\de_2=+,-}G((\de_1 z+\de_2 s-ib)/2).
\ee
Reversing the change of variables \eqref{zs}, we arrive at the product of one-variable integrals
\be
\prod_{j=1}^2 \int_\R dz_j \exp(i\alpha z_jy_j)\prod_{\de=+,-}G(\de z_j-ib/2).
\ee
Finally, setting $\nu=-\mu=ib/2$ in \eqref{Fform}, we obtain an evaluation formula for these integrals, which yields the right-hand side of \eqref{intEq}.
\end{proof}

Our next theorem is a simple corollary of Theorem~3.1.

\begin{theorem}
Letting $(b,x,y)\in (0,2a)\times\R^2\times \R^2$, we have  
\be\label{intEqalt}
\int_{\R^2} dz\, w(2a-b;z_1-z_2)\cS_2(2a-b;x,z)J_2(b;y,z)=2\mu(2a-b;y)J_2(b;y,x).
\ee
\end{theorem}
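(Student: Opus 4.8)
The plan is to derive \eqref{intEqalt} from Theorem~3.1 in exactly the way Theorem~2.5 was obtained from Theorem~2.4: substitute $b\to 2a-b$ and then convert the resulting $(2a-b)$-eigenfunctions back into $b$-eigenfunctions by means of the symmetry relations \eqref{J2J} and \eqref{Jsym}. Since the integral equation \eqref{intEq} holds for every $b\in(0,2a)$, the replacement $b\to 2a-b$ is legitimate and yields
\be
\int_{\R^2} dz\, w(2a-b;z_1-z_2)\cS_2(2a-b;x,z)J_2(2a-b;z,y)=2\mu(2a-b;y)J_2(2a-b;x,y).
\ee
It therefore remains only to re-express the two occurrences of $J_2(2a-b;\cdot,\cdot)$ in terms of the $b$-eigenfunction $J_2(b;\cdot,\cdot)$.

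The key step is the identity $J_2(2a-b;u,v)=G(ia-ib)^{-2}J_2(b;v,u)$, valid for any pair of two-component arguments $u,v$. To establish it I would first use \eqref{J2J} to write $J_2(2a-b;u,v)$ as the product of a plane-wave factor and $J(2a-b;u_1-u_2,v_1-v_2)$, then invoke \eqref{Jsym} in the form $J(2a-b;u_1-u_2,v_1-v_2)=G(ia-ib)^{-2}J(b;v_1-v_2,u_1-u_2)$, and finally recognise the plane wave together with $J(b;v_1-v_2,u_1-u_2)$ as $J_2(b;v,u)$ via \eqref{J2J} once more. The point is that the plane-wave prefactor in \eqref{J2J} is symmetric under the interchange of its two slots, so it is untouched by the relabeling and only the scalar factor $G(ia-ib)^{-2}$ survives.

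Applying this identity with $(u,v)=(z,y)$ on the left-hand side and with $(u,v)=(x,y)$ on the right-hand side of the displayed equation, each side acquires a common factor $G(ia-ib)^{-2}$. Cancelling this factor leaves precisely \eqref{intEqalt}, with the weight $w(2a-b;z_1-z_2)$, the kernel $\cS_2(2a-b;x,z)$ and the eigenvalue $2\mu(2a-b;y)$ all carried over unchanged from the substituted form of Theorem~3.1. There is no genuine analytic obstacle here, and no new convergence questions arise because \eqref{intEq} already holds throughout $(0,2a)$; the only thing requiring care is the bookkeeping of argument order. One must track that the slot-swap produced by \eqref{Jsym} turns $J_2(2a-b;z,y)$ into $J_2(b;y,z)$ and $J_2(2a-b;x,y)$ into $J_2(b;y,x)$, so that the two factors of $G(ia-ib)^{-2}$ indeed match and cancel cleanly.
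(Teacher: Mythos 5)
Your proposal is correct and follows essentially the same route as the paper, whose entire proof reads: take $b\to 2a-b$ in \eqref{intEq} and use \eqref{Jsym}. Your additional bookkeeping --- deriving the two-particle identity $J_2(2a-b;u,v)=G(ia-ib)^{-2}J_2(b;v,u)$ from \eqref{J2J} and \eqref{Jsym}, noting the symmetry of the plane-wave prefactor, and cancelling the common factor $G(ia-ib)^{-2}$ from both sides --- simply makes explicit what the paper leaves implicit.
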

\begin{proof}
This follows from~\eqref{intEq} by taking $b\to 2a-b$ and using~\eqref{Jsym}.
\end{proof}

Next, we define
\be\label{rF2}
\rF_2(b;x,y)\equiv \exp(i\alpha(x_1+x_2)(y_1+y_2)/2)\rF(b;x_1-x_2,y_1-y_2),\ \ x,y\in G_2,
\ee
where we have introduced 
\be\label{G2}
G_2\equiv \{ x\in\R^2\mid x_2<x_1\}.
\ee
The integrands on the left-hand sides of~\eqref{intEq} and~\eqref{intEqalt} are invariant under swapping $z_1$ and~$z_2$, so we can use~\eqref{rFJ} to rewrite these formulas in terms of~$\rF_2$. This yields
\be\label{int1}
\int_{G^2} dz\, w(b;x_1-x_2)^{1/2}\cS_2(b;x,z)w(b;z_1-z_2)^{1/2}\rF_2(b;z,y)=\mu(b;y)\rF_2(b;x,y),\ x,y\in G_2,
\ee
and
\begin{multline}\label{int2}
\int_{G^2} dz\, w(2a-b;x_1-x_2)^{1/2}\cS_2(2a-b;x,z)w(2a-b;z_1-z_2)^{1/2}\rF_2(b;z,y)
\\
=\mu(2a-b;y)\rF_2(b;x,y),\ \ \ x,y\in G_2.
\end{multline}

Now the generalized Fourier transform
\be\label{cF2}
\cF_2(b)
\, :\, \cC_2\equiv C_0^{\infty}(G_2)\subset   L^2(G_2) \to  L^2(G_2),\ \ \ b\in(0,2a), 
\ee
defined by
\be\label{cFE2}
(\cF_2(b)\psi)(x)\equiv \frac{1}{a_+a_-} \int_{G_2}\rF_2(b;x,y)\psi(y)dy,\ \ \ \
\psi\in\cC_2,\ \ x\in G_2,
\ee
extends to a unitary operator. (This readily follows from the tensor product structure exhibited by the kernel $\rF_2$.) Just as in Section~2, we proceed to consider the integral operator given by
\be\label{cI2}
(\cI_2(b)f)(x)\equiv \int_{G_2} dy\,  w(b;x_1-x_2)^{1/2}\cS_2(b;x,y)w(b;y_1-y_2)^{1/2}f(y),\ f\in L^2(G_2), \ b\in(0,2a).
\ee
In view of~\eqref{Grefl} and~\eqref{Gconj}, its kernel is positive and invariant under swapping~$x$ and~$y$. As in Section~2, however, it is not obvious that $\cI_2(b)$  is bounded, let alone that it satisfies 
\be
[\cI_2(b),\cI_2(2a-b)]=0.
\ee
These properties are a direct consequence of our next result.

\begin{theorem}
Letting $b\in(0,2a)$, the operator $\cI_2(b)$ is bounded, and the operator $\cF_2(b)^*\cI_2(b)\cF_2(b)$ on $L^2(G_2,dv)$ acts as multiplication by the positive function $\mu(b;v)$ given by \eqref{defmu}. Moreover,
 the operator $\cF_2(b)^*\cI_2(2a-b)\cF_2(b)$ on $L^2(G_2,dv)$ acts as multiplication by~$\mu(2a-b;v)$.
\end{theorem}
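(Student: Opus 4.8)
The plan is to run the two-particle analogue of the argument proving Theorem~2.6, with the identities~\eqref{int1} and~\eqref{int2} taking over the role of~\eqref{cIeig}. The key observation is that~\eqref{int1} says precisely that for each fixed $y\in G_2$ the function $\rF_2(b;\cdot,y)$ is a generalized eigenfunction of the integral operator $\cI_2(b)$ from~\eqref{cI2}, with eigenvalue $\mu(b;y)$, while~\eqref{int2} exhibits the same function $\rF_2(b;\cdot,y)$ (still carrying the parameter $b$) as a generalized eigenfunction of $\cI_2(2a-b)$ with eigenvalue $\mu(2a-b;y)$. Since the kernel of $\cF_2(b)$ is $\rF_2(b;x,y)/a_+a_-$, cf.~\eqref{cFE2}, these relations should turn into the asserted diagonalizations after integrating against test functions.

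Concretely, I would multiply~\eqref{int1} by $f(y)/a_+a_-$ with $f\in\cC_2=C_0^\infty(G_2)$ and integrate over $y\in G_2$. Interchanging the order of the $y$- and $z$-integrations on the left-hand side then produces the intertwining relation
\be
\cI_2(b)\cF_2(b)f=\cF_2(b)\big(\mu(b;\cdot)f\big),\qquad f\in\cC_2.
\ee
As $\cF_2(b)$ extends to a unitary operator, cf.~\eqref{cF2}, we have $\cF_2(b)^{-1}=\cF_2(b)^*$, so this is equivalent to $\cF_2(b)^*\cI_2(b)\cF_2(b)f=\mu(b;\cdot)f$ on the dense subspace $\cC_2$. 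Starting from~\eqref{int2} instead yields, word for word, $\cF_2(b)^*\cI_2(2a-b)\cF_2(b)f=\mu(2a-b;\cdot)f$ on $\cC_2$, which is the second assertion at the level of the dense domain.

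To promote these dense-domain identities to genuine operator statements and to obtain boundedness of $\cI_2(b)$, I would verify that $\mu(b;\cdot)$ is a bounded positive function on $G_2$. Positivity follows from the conjugacy relation~\eqref{Gconj}, which for real $v_j$ gives $G(-v_j+ib/2-ia)=\overline{G(v_j+ib/2-ia)}$ and realness of $G(ia-ib)$, so that each factor in~\eqref{defmu} has the form $|G(\cdot)|^2\ge0$ (the $G$-zero locations exclude vanishing for $b\in(0,2a)$). Boundedness, in fact exponential decay, follows from the $G$-asymptotics~\eqref{Gas}: as $v_j\to\pm\infty$ the product $G(v_j+ib/2-ia)G(-v_j+ib/2-ia)$ decays like $\exp(\alpha(b/2-a)|v_j|)$, which tends to zero because $b<2a$. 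Granting this, $M_{\mu(b;\cdot)}$ is a bounded self-adjoint multiplication operator, whence $\cF_2(b)M_{\mu(b;\cdot)}\cF_2(b)^*$ is bounded; since $\cI_2(b)$ agrees with it on the dense set $\cF_2(b)\cC_2$ (where the defining integral is absolutely convergent), the two coincide on all of $L^2(G_2)$ and $\cI_2(b)$ is bounded. The commutation $[\cI_2(b),\cI_2(2a-b)]=0$ is then immediate, as the unitary $\cF_2(b)$ conjugates both operators into the commuting multiplication operators $M_{\mu(b;\cdot)}$ and $M_{\mu(2a-b;\cdot)}$.

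The step I expect to be the main obstacle is justifying the interchange of integrations, and with it the very definedness of $\cI_2(b)$ on $L^2(G_2)$. In contrast to the one-particle setting of Theorem~2.6, the kernel $\rF_2(b;z,y)$ carries the center-of-mass plane wave $\exp(i\alpha(z_1+z_2)(y_1+y_2)/2)$, which has modulus one and hence supplies no decay in the $z_1+z_2$ direction; the needed absolute convergence must therefore be wrung out of the factor $w(b;z_1-z_2)^{1/2}\cS_2(b;x,z)$. Using the $G$-asymptotics~\eqref{Gas} exactly as in the estimates~\eqref{was}--\eqref{Kas}, one checks that this factor decays exponentially in every direction of $G_2$: in the relative direction $z_1-z_2\to\infty$ the decay of $\cS_2$ beats the growth $\exp(\alpha b(z_1-z_2)/2)$ of $w^{1/2}$, while in the center-of-mass direction $z_1+z_2\to\pm\infty$ the four $G$-quotients in $\cS_2$ conspire to give decay $\exp(-\alpha b|z_1+z_2|)$. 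Together with the compact support of $f$ and the boundedness of $\rF_2$ for real arguments (cf.~\eqref{rFJ} with the $J$-asymptotics~\eqref{Jas}--\eqref{Jasb}), this legitimizes Fubini and shows $\cI_2(b)$ is well defined, completing the argument.
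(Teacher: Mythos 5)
Your proposal is correct and follows essentially the same route as the paper: the paper's proof of this theorem simply says it "follows as before" from~\eqref{int1} and~\eqref{int2}, i.e., by integrating these eigenfunction identities against test functions in $\cC_2$ and invoking the unitarity of $\cF_2(b)$, exactly as you do. Your additional verifications (positivity and exponential decay of $\mu(b;\cdot)$, the Fubini justification via the decay of $w(b;z_1-z_2)^{1/2}\cS_2(b;x,z)$ in both the relative and center-of-mass directions, and the extension from the dense domain) are sound elaborations of steps the paper leaves implicit.
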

\begin{proof}
This follows as before from \eqref{int1} and \eqref{int2}.
\end{proof}

 Taking~$y\to v$ in \eqref{int1} and then integrating with $\overline{\rF_2}(b;y,v)$, we obtain the identity
\be\label{iden1}
w(b;x_1-x_2)^{1/2}\cS_2(b;x,y)w(b;y_1-y_2)^{1/2}
=\frac{1}{(a_+a_-)^2}\int_{G_2}dv\, \mu(b;v)\rF_2(b;x,v)\overline{\rF_2}(b;y,v).
\ee
Since $\rF_2$ is invariant under $b\to 2a-b$, this implies a second identity that we shall not spell out. 

We conclude this section by clarifying the relation of these identities to \eqref{id1} and \eqref{id2}. When we transform~\eqref{iden1} to sum and difference variables, we obtain
\begin{multline}\label{kerform}
w(b;x)^{1/2}\cK(b;x,y, s-t)w(b;y)^{1/2}
=\frac{1}{2(a_+a_-)^2}
\\
\times \int_{\R}dr \int_0^{\infty}dv\mu(b;(r+v)/2,(r-v)/2)\exp(i\alpha(r(s-t)/2)\rF(b;x,v)\rF(b;y,v).
\end{multline}
Setting $z:=s-t$, we can now combine the definitions~\eqref{defmu} and~\eqref{Jrep} of~$\mu$ and~$J$ to get
\be
\int_{\R} dr\mu(b;(r+v)/2,(r-v)/2)\exp(i\alpha rz/2)=2a_+a_- G(ia-ib)^2 J(2a-b;v,z).
\ee
Using next the $J$-symmetry \eqref{Jsym}, the right-hand side becomes $2a_+a_- J(b;z,v)$. Finally, trading~$J$ for~$\rF$ by using~\eqref{rFJ}, we arrive at \eqref{id1}.
  
\newpage

\section{Nonrelativistic limit formulas}

Subsection 4.2 of~\cite{R11} deals with the nonrelativistic limit of the $\cR$-function. In terms of $J(b;x,y)$~\eqref{Jrep}, the starting point is the reparametrized $J$-function
\be\label{Jbeta}
J(\pi,\beta,\beta g;r,\beta k)=\frac{1}{2}\int_{\R}dt\prod_{\de =+,-}\frac{G(\pi,\beta;(t+\de r-i\beta g)/2)}{G(\pi,\beta;(t+\de r+i\beta g)/2)}\exp(itk),
\ee
and the limit amounts to taking $\beta$ to 0.  When we formally interchange it with the integration, we can use~\eqref{GGlim}. This yields
\be\label{JFlim} 
\lim_{\beta\to 0}J(\pi,\beta,\beta g;r,\beta k)=  \frac{1}{2}\int_{\R} dt\, \frac{ \exp(itk)}{\prod_{\de=+,-}[2\cosh((t+\de r)/2)]^{g}}.
\ee
Taking $r$ and $k$ real, the latter integral obviously  converges for $\re g>0$. It equals the function $F(g;r,2k)$ defined by Eq.~(65) of our joint paper~\cite{HR15}. Its relation to the conical (or Mehler) function $P_{ik-1/2}^{1/2-g}(\cosh r)$ is given by
\be
F(g;r,2k)=\left(\frac{\pi}{4}\right)^{1/2}\frac{\Gamma(g +ik) \Gamma(g -ik)}{\Gamma(g)(2\sinh r)^{g -1/2}}
P_{ik-1/2}^{1/2-g}(\cosh r),
\ee
cf.~Eq.~14.12.4 in~\cite{Dig10}. In addition, one can produce a number of expressions for $F$ in terms of the hypergeometric function ${}_2F_1$. For example, the conical function specialization of Eq.~14.3.15 in \cite{Dig10} and the duplication formula for the gamma function entail
\be\label{FHypergRep}
F(g;r,2k)=\frac{\Gamma(g+ik)\Gamma(g-ik)}{2\Gamma(2g)}{}_2F_1(g+ik,g-ik;g+1/2;-\sinh^2(r/2)).
\ee

Now in~\cite{R11} the above interchange of limits was left uncontrolled. With Prop.~B.1 at our disposal, we can not only remedy this, but also obtain bounds on the exponential decay in~$r$ and~$ k$ of~$J(\pi,\beta,\beta g;r,\beta k)$ that are uniform for~$\beta$ small enough. This is detailed in the following proposition.

\begin{proposition}
Let $(g,r,k)\in\C^3$ be restricted by
\be
(g,r,k)\in \{\re g>0\}\times \{|\im r| <\pi/2\}\times\{|\im k|<\re g\}=:\cP.
\ee
Then we have
\be\label{JF}
\lim_{\beta\to 0}J(\pi,\beta,\beta g;r,\beta k)= F(g;r,2k),
\ee
where the limit is uniform on compacts of the parameter space~$\cP$. Next, suppose $(g,r,k)\in(0,\infty)\times\R^2$. Then we have upper bounds
\be\label{Jbr}
|J(\pi,\beta,\beta g;r,\beta k)|\le C r/\sinh(g r),
\ee
\be\label{Jbk}
|J(\pi,\beta,\beta g;r,\beta k)|\le C_{\varepsilon}\exp(-(\pi-\varepsilon) |k|),\ \ \ \varepsilon >0,
\ee
where the constants $C$ and $C_{\varepsilon}$ are independent of $\beta\in(0,\beta_0]$, with
\be
\beta_0\equiv \min (\pi/4,\pi/2g). 
\ee
Finally, the limit function satisfies
\be\label{Fbr}
|F(g;r,2k)|\le C r/\sinh(g r),
\ee 
\be\label{Fbk}
|F(g;r,2k)|\le C_{\varepsilon}\exp(-(\pi-\varepsilon) |k|),\ \ \ \varepsilon >0.
\ee 
 \end{proposition}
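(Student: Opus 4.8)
The plan is to transfer the limit and the two bounds through the integral in the reparametrized representation~\eqref{Jbeta}, using the uniform-in-$\beta$ integrand estimates of Proposition~B.1. For the limit~\eqref{JF}, the limit formula~\eqref{GGlim} gives, for each fixed $t$ and each $(g,r,k)\in\cP$, the pointwise convergence
\[
\prod_{\de=+,-}\frac{G(\pi,\beta;(t+\de r-i\beta g)/2)}{G(\pi,\beta;(t+\de r+i\beta g)/2)}\,\exp(itk)\ \longrightarrow\ \frac{\exp(itk)}{\prod_{\de=+,-}[2\cosh((t+\de r)/2)]^{g}},\qquad \beta\to0.
\]
The right-hand side is precisely the integrand defining $F(g;r,2k)$ in~\eqref{JFlim}. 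To interchange limit and integral I would invoke Proposition~B.1 to produce a dominating function: for all sufficiently small $\beta$ and all $(g,r,k)$ in a prescribed compact subset of~$\cP$, the modulus of the integrand in~\eqref{Jbeta} is bounded by one fixed integrable function of~$t$ of the same $\cosh$-power shape (with $\re g$ in the exponents). Dominated convergence then yields~\eqref{JF}, and the compact-uniform character of this domination upgrades the convergence to one that is uniform on compacts of~$\cP$.

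For the bound~\eqref{Jbr} with $(g,r,k)\in(0,\infty)\times\R^2$, I would dominate the integrand of~\eqref{Jbeta} via Proposition~B.1 by $C\prod_{\de=+,-}[2\cosh((t+\de r)/2)]^{-g}$, with $C$ independent of $\beta\in(0,\beta_0]$ and of $(r,k)$, reducing matters to estimating
\[
\int_\R\frac{dt}{\prod_{\de=+,-}[2\cosh((t+\de r)/2)]^{g}}.
\]
Using the identity $2\cosh((t+r)/2)\cosh((t-r)/2)=\cosh t+\cosh r$, this reduces (up to a constant) to $\int_\R dt\,(\cosh t+\cosh r)^{-g}$; splitting the integration into $|t|\le r$ and $|t|>r$ bounds it by a constant times $e^{-gr}(r+1/g)$, and comparing with the small- and large-$r$ behaviour of $r/\sinh(gr)$ yields~\eqref{Jbr} with a $\beta$-independent constant. (For $g=1$ this integral equals exactly $2r/\sinh r$, which already displays the right form.)

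For the exponential decay~\eqref{Jbk} I would shift the $t$-contour in~\eqref{Jbeta} to the line $\im t=\mathrm{sgn}(k)\,(\pi-\varepsilon)$, on which $|\exp(itk)|=\exp(-(\pi-\varepsilon)|k|)$. The bound then follows once the shift is legitimate, i.e.\ once one checks that for every sufficiently small $\beta$ no poles of the $G$-ratio lie in the strip $0\le\mathrm{sgn}(k)\,\im t\le\pi-\varepsilon$, and that Proposition~B.1 together with Appendix~B supplies a $\beta$-uniform, $\re t$-integrable bound for the shifted integrand. Finally, the bounds~\eqref{Fbr} and~\eqref{Fbk} for the limit function are immediate: since $C$ and $C_\varepsilon$ in~\eqref{Jbr}--\eqref{Jbk} are independent of~$\beta$ and the right-hand sides do not involve~$\beta$, letting $\beta\to0$ and using~\eqref{JF} transfers both estimates verbatim to $F(g;r,2k)$.

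The \emph{main obstacle} is the justification of the contour shift underlying~\eqref{Jbk}. The singularities of the hyperbolic-gamma ratio depend on~$\beta$ and only approach, as $\beta\to0$, the branch points of $[2\cosh((t+\de r)/2)]^{-g}$ on the lines $\im t=\pm\pi$; one must therefore control their locations uniformly for $\beta\in(0,\beta_0]$ to secure a pole-free strip of width $\pi-\varepsilon$, while simultaneously establishing the $\beta$-uniform decay of the integrand along the shifted contour — this is where the estimates of Appendix~B are essential. By contrast, the domination argument for~\eqref{JF} and the explicit $r$-estimate for~\eqref{Jbr} are comparatively routine once Proposition~B.1 is available.
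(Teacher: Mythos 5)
Your proposal is correct and follows essentially the same route as the paper's proof: dominated convergence via Proposition~B.1 for the limit \eqref{JF}, the same majorization reducing \eqref{Jbr} to the integral $\int_\R\prod_{\de=+,-}[2\cosh((t+\de r)/2)]^{-g}dt$ (which you estimate directly, where the paper instead cites the proof of Prop.~4.3 in \cite{HR15}), a contour shift to $|\im t|=\pi-\varepsilon$ for \eqref{Jbk}, and passage of the $\beta$-independent bounds to the limit function for \eqref{Fbr}--\eqref{Fbk}. The step you single out as the main obstacle is settled in the paper exactly by the mechanism you name: applying Proposition~B.1 with the roles of $t$ and $r$ swapped, the resulting bound is valid, finite and integrably decaying throughout the strip $|\im t|\le\pi-\varepsilon$, which simultaneously rules out poles there and justifies the shift (note that the hypothesis of Prop.~B.1 then forces $\beta$ to be small compared to $\varepsilon$, a restriction on the admissible $\beta$-range that the paper leaves implicit, so your worry about uniformity over all of $(0,\beta_0]$ is well placed).
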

\begin{proof}
For $|\re g/2|\le R\in[1,\infty)$, $|\im r/2|\le \rho\in[0,\pi/2)$ and $t$ real, we obtain from Prop.~B.1 the majorization
\be\label{Gmaj}
\left|\frac{G(\pi,\beta;(t+\de r-i\beta g)/2)}{G(\pi,\beta;(t+\de r+i\beta g)/2)}\right|\le C(g, \rho)\left|\exp\left(-g \ln  2\cosh((t+\de r)/2)\right)\right|,
\ee
with $C$ continuous on $\cS_R\times [0,\pi/2)$ and $\beta\in(0,(\pi-2\rho)/4R]$. Letting next $\re g\in [\epsilon,2R]$ with $\epsilon\in(0,1]$, and restricting~$k\in\C$ to a strip $|\im k|\le \epsilon'$ with $\epsilon'<\epsilon$, the integrand in~\eqref{Jbeta} is $O(\exp (\epsilon'-\epsilon)t)$ as $t\to\infty$, with the implied constant independent of $\beta$. Thus we can invoke the dominated convergence theorem to justify the interchange of limits, so that the first assertion readily follows.

To prove \eqref{Jbr} we use~\eqref{Gmaj}, yielding
\be
|J(\pi,\beta,\beta g;r,\beta k)|\le C(g,0)  \int_{\R}dt \,\frac{1}{\prod_{\de=+,-}[2\cosh((t+\de r)/2)]^{g}}.
\ee
From the proof of Prop.~4.3 in~\cite{HR15} we then deduce~\eqref{Jbr}. Finally, to show~\eqref{Jbk}, we first write
\be
e^{\eta k}J(\pi,\beta,\beta g;r,\beta k)=\frac{1}{2}\int_{-\infty}^{\infty}dt\prod_{\de =+,-}\frac{G(\pi,\beta;(t+\de r-i\beta g)/2)}{G(\pi,\beta;(t+\de r+i\beta g)/2)}\exp(i(t-i\eta)k).
\ee
Now for $|\eta|\le \pi-\varepsilon$ we deduce from the bound~\eqref{Gmaj} with $t$ and $r$ swapped that we can shift the $t$-contour by $\eta$, after which we easily get the estimate 
\be
|e^{\eta k}J(\pi,\beta,\beta g;r,\beta k)|\le C(g,0)\int_{-\infty}^{\infty} dt|4\cosh((t+i\eta+r)/2)\cosh((t+i\eta-r)/2)|^{-g}.
\ee
Clearly, this entails the bound~\eqref{Jbk}. Finally, since the bounds~\eqref{Jbr} and~\eqref{Jbk} do not depend on~$\beta$, they extend to the limit function~$F$, so the proof is complete.
\end{proof}

It transpires from the proof that the uniform bounds \eqref{Jbr} and~\eqref{Jbk} can be extended to suitably restricted complex $g$, $r$ and~$k$. However, for our next aim in this section, namely to obtain two product formulas for the limit function $F(g;r,2k)$, the bounds suffice to invoke the dominated convergence theorem, as will become clear shortly.

The first product formula follows directly from Theorem~2.4 when we use \eqref{Jbr} and Appendix~B.

\begin{theorem}
Letting $g\in (0,\infty)$ and $r,s,k\in\R$, we have
\be\label{Fpr1}
F(g;r,2k)F(g;s,2k)=\frac{1}{2}\int_0^{\infty}dtF(g;t,2k)\frac{[2\sinh t]^{2g}}{\prod_{\de_1,\de_2=+,-}[2\cosh((t+\de_1 r+\de_2 s)/2)]^g}.
\ee
\end{theorem}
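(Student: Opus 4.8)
The plan is to derive \eqref{Fpr1} as the nonrelativistic limit of the relativistic product formula \eqref{prform} in Theorem~2.4, using the uniform bounds just established in Proposition~4.1 to justify passing the limit $\beta\to 0$ inside the integral. First I would apply the reparametrization underlying \eqref{Jbeta}, namely setting $a_+=\pi$, $a_-=\beta$, and taking the coupling $b=\beta g$, together with scaled variables of the form $x=r$, $y=s$, $v=\beta k$, and the integration variable $z=t$. The aim is to rewrite each ingredient of \eqref{prform} in these scaled variables and identify its $\beta\to 0$ limit: the two factors on the left become $F(g;r,2k)$ and $F(g;s,2k)$ by \eqref{JF}, the factor $J(b;z,v)=J(\pi,\beta,\beta g;t,\beta k)$ inside the integral becomes $F(g;t,2k)$, and I must show that the product of the weight function $w(b;z)$ and the eight-fold $G$-product $\cK$ converges to the explicit hyperbolic kernel $[2\sinh t]^{2g}/\prod_{\de_1,\de_2}[2\cosh((t+\de_1 r+\de_2 s)/2)]^g$ appearing in \eqref{Fpr1}.

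The pointwise convergence of the kernel is the hyperbolic type of limit, controlled by the limit formula \eqref{GGlim} referenced in the introduction. Concretely, I would pair up the $G$-factors of $\cK(b;x,y,z)=\prod_{\de_1,\de_2,\de_3}G((\de_1 x+\de_2 y+\de_3 z-ib)/2)$ so that each pair differing only by the sign of $\de_3$ combines, in the scaled variables with $b=\beta g$, into a ratio $G(\cdots-i\beta g/2)/G(\cdots+i\beta g/2)$ of the kind appearing in \eqref{Jbeta}, whose $\beta\to 0$ limit is a power of $2\cosh$ by \eqref{GGlim}. Similarly the weight $w(b;z)=1/c(b;z)c(b;-z)$, expressed via \eqref{c} and the $G$-difference equations, should produce the $[2\sinh t]^{2g}$ factor. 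This is a bookkeeping computation: one assembles the four cosine factors in the denominator and the sinh factor in the numerator by carefully matching signs. I would verify the overall normalization—in particular the prefactor $1/2$ and the absence of stray constants—by checking against the known limit \eqref{JFlim} of $J$ itself, which fixes how the $\beta$-dependent prefactors in the $G\to 2\cosh$ limit conspire to cancel.

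The main obstacle, and the reason the bounds of Proposition~4.1 are invoked, is the justification of the interchange of limit and integration via dominated convergence. Here I would use the uniform bound \eqref{Jbr} on $J(\pi,\beta,\beta g;t,\beta k)$, valid uniformly for $\beta\in(0,\beta_0]$, to control the $F(g;t,2k)$-type factor in the integrand, together with the uniform majorization of the scaled $\cK$-kernel supplied by Appendix~B (the same Prop.~B.1 majorization \eqref{Gmaj} used in the proof of Proposition~4.1). The product of these bounds must be shown to be integrable in $t$ over $(0,\infty)$ independently of $\beta$: the factor $[2\cosh((t+\de_1 r+\de_2 s)/2)]^{-g}$ from the kernel decays like $e^{-2gt}$ for large $t$ while the $F$-factor grows at most polynomially (indeed is bounded) by \eqref{Jbr}, so the tail is dominated; near $t=0$ the $[2\sinh t]^{2g}$ numerator vanishes and offsets any mild singularity, so integrability holds uniformly. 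Once this dominating function is exhibited, dominated convergence applies and taking $\beta\to 0$ term by term in the scaled version of \eqref{prform} yields \eqref{Fpr1} directly.
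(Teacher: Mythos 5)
Your overall route is exactly the paper's: substitute $a_+=\pi$, $a_-=\beta$, $b=\beta g$, with $x=r$, $y=s$, $z=t$ unscaled and $v=\beta k$, in \eqref{prform}, identify the pointwise limits via \eqref{JF}, \eqref{GGlim} and \eqref{wlim}, and justify the interchange of limit and integration by dominated convergence using Appendix~B together with \eqref{Jbr}. The genuine gap is in your domination argument: you bound the $J$-factor by \eqref{Jbr} and the kernel $\cK$ by Prop.~B.1, but you never produce a $\beta$-uniform bound on the weight function $w(\pi,\beta,\beta g;t)$. Prop.~B.1 cannot supply it: writing $w$ as a product of two $G$-ratios, their arguments are $t\mp i\pi/2+O(\beta)$, whose imaginary parts sit at the edges $\pm\pi/2$ of the cut plane \eqref{Ccut}, outside the strips $|\im z|\le\rho<\pi/2$ to which the majorization \eqref{Gmaj} applies. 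This is precisely why the paper proves the separate (and hardest) estimate Prop.~B.2, namely $w(\pi,\beta,g\beta;t)<C(g)\sinh(gt)^2$ uniformly for $\beta\in(0,1]$. Your phrase ``near $t=0$ the $[2\sinh t]^{2g}$ numerator vanishes'' substitutes the limit function for the required majorant of the pre-limit integrand; that such a majorant exists is what has to be proved. (Your large-$t$ bookkeeping also overlooks that $w$ grows like $e^{2gt}$, exactly cancelling the $e^{-2gt}$ decay of the kernel; the integrable decay of the dominating function comes from the factor $t/\sinh(gt)$ in \eqref{Jbr}, not from the kernel alone.)

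Moreover, Prop.~B.2 holds only for $g\ge 1$, and the paper stresses that this restriction cannot be relaxed: for $g<1$ the limit function $(2\sinh t)^{2g}$ itself violates any bound of the form $C\sinh(gt)^2$ near $t=0$. Hence your claim that dominated convergence settles all $g\in(0,\infty)$ in one stroke cannot be carried out with these tools. The paper's proof is two-step: dominated convergence yields \eqref{Fpr1} for $g\ge 1$, and the range $g\in(0,1)$ is then reached by a separate argument, using that \eqref{JFlim} makes both sides of \eqref{Fpr1} real-analytic in $g$ (with the hyperbolic quotient decaying exponentially for $g$ in a complex neighborhood of the positive axis), so that the identity extends by analytic continuation. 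This continuation step is entirely missing from your proposal. A smaller slip: pairing the $G$-factors of $\cK$ by flipping only $\de_3$ does not produce ratios to which \eqref{GGlim} applies, since the two arguments then have different real parts; one must pair each triple $(\de_1,\de_2,\de_3)$ with its antipode $(-\de_1,-\de_2,-\de_3)$ and invoke the reflection equation \eqref{Grefl} to obtain ratios of the form $G(w/2-i\beta g/2)/G(w/2+i\beta g/2)$, whose limits give the four factors $[2\cosh((t+\de_1 r+\de_2 s)/2)]^{-g}$.
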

\begin{proof}
When we substitute  
\be\label{limsub}
a_+=\pi,\ \  a_-=\beta,\ \ b=\beta g,\ \ (x,y,z,v)=\beta (r,s,t,k),
\ee
in \eqref{prform}, then the limits~\eqref{JF}, \eqref{GGlim} and~\eqref{wlim} give rise to~\eqref{Fpr1}, provided we interchange the $\beta\to 0$ limit and the integration. To control this interchange, we need to restrict the coupling~$g$ to $[1,\infty)$. Then we can first use Prop.~B.1 and Prop.~B.2 to conclude that the product of the $w$-function and kernel function is bounded for $t\in(0,\infty)$, uniformly for $\beta$ small enough. Now it suffices to appeal to the uniform bound~\eqref{Jbr} to obtain the desired dominating function in $L^1((0,\infty))$. Therefore~\eqref{Fpr1} follows for $g\ge 1$. 

In order to lift the $g$-restriction, we note that~\eqref{JFlim} implies that for fixed $t,k\in\R$ the function $F(g;t,2k)$ on the right-hand side of~\eqref{Fpr1} is real-analytic in~$g$ for $g> 0$. From~\eqref{JFlim} it is also clear that it is bounded for $t,k$ varying over~$\R$ and~$g$ in a complex neighborhood of the positive real axis.   For $g$ in such a neighborhood, the remaining hyperbolic quotient in the integrand decays exponentially as $t\to\infty$, so for $g\in(0,1)$ the product formula~\eqref{Fpr1} follows by analytic continuation.
\end{proof}

The second product formula results from Theorem~2.5 by using~\eqref{Jbk} and Appendix~C.

\begin{theorem}
Letting $g\in (0,\infty)$ and $r,p,q\in\R$, we have
\be\label{Fpr2}
F(g;r,2p)F(g;r,2q)  =  \frac{1}{8\pi}\int_0^{\infty}dk    F(g;r,2k)
 \frac{\prod_{\de_1,\de_2,\de_3=+,-}\Gamma((g+i\de_1 p+i\de_2 q+i\de_3 k)/2)}{\Gamma(g)^2\prod_{\de =+,-}\Gamma( i\de k)\Gamma(g+ i\de k)}.
\ee
\end{theorem}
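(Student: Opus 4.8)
The plan is to transcribe the proof of Theorem~4.2, but starting from the \emph{second} product formula \eqref{prformalt} and arranging the scaling so that the hyperbolic gamma functions in the kernel degenerate to Euler $\Gamma$-functions rather than to hyperbolic cosines. Concretely, I would substitute
\[
a_+=\pi,\quad a_-=\beta,\quad b=\beta g,\quad x=r,\quad t=\beta p,\quad u=\beta q,\quad v=\beta k
\]
into \eqref{prformalt}, with $dv=\beta\,dk$ and $k$ running over $(0,\infty)$. By \eqref{JF} the two factors on the left and the factor $J(b;x,v)$ in the integrand converge to $F(g;r,2p)$, $F(g;r,2q)$ and $F(g;r,2k)$; note that, exactly as in \eqref{Jbeta}, the shared first argument $x=r$ stays macroscopic while the second (spectral) arguments $t,u,v$ are rescaled by $\beta$. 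The decisive difference with Theorem~4.2 is the extra shift $-ia$ present in the kernel $\prod_{\de_1,\de_2,\de_3}G((\de_1 t+\de_2 u+\de_3 v+ib)/2-ia)$ of \eqref{prformalt}: under the substitution the argument of each of the eight factors tends to $-ia_+/2=-i\pi/2$ with an $O(a_-)$ fluctuation, so this product is governed by the $G\to\Gamma$ limit \eqref{defH}--\eqref{Plim} rather than by the hyperbolic limit \eqref{GGlim}.

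With this in hand I would take the limit factor by factor. Applying \eqref{defH}--\eqref{Plim} to the kernel turns the eight-fold $G$-product into $\prod_{\de_1,\de_2,\de_3}\Gamma((g+i\de_1 p+i\de_2 q+i\de_3 k)/2)$; the prefactor $G(ia-ib)^2$, whose argument approaches $+ia_+/2$, contributes the denominator $\Gamma(g)^2$; and the weight $w(2a-b;\beta k)$, being a ratio of four $G$-functions with arguments near $\pm ia_+/2$, contributes the denominator $\prod_{\de=+,-}\Gamma(i\de k)\Gamma(g+i\de k)$. The only remaining step here is the (purely mechanical) bookkeeping of the powers of $\beta$ generated by each $G\to\Gamma$ limit, together with the Jacobian $\beta$ from $dv=\beta\,dk$ and the prefactor $\tfrac12$: these must combine with the numerical constants in \eqref{defH}--\eqref{Plim} to leave precisely the factor $1/8\pi$ in \eqref{Fpr2}.

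The step I expect to be the main obstacle is justifying the interchange of the limit $\beta\to0$ with the $k$-integration, which is where \eqref{Jbk} and Appendix~C enter. For a dominated-convergence argument I would bound the integrand, uniformly for $\beta$ small, as the product of two pieces: the factor $J(\pi,\beta,\beta g;r,\beta k)$, controlled by the $\beta$-independent exponential decay $|J(\pi,\beta,\beta g;r,\beta k)|\le C_{\varepsilon}\exp(-(\pi-\varepsilon)|k|)$ of \eqref{Jbk}; and the product of $w(2a-b;\beta k)$ with the eight-fold $G$-product, which Appendix~C bounds---uniformly in $\beta$---by a function of at most polynomial growth in $k$ on $(0,\infty)$. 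Since the exponential decay overwhelms the polynomial growth, the product is a fixed $L^1((0,\infty))$ majorant and legitimizes the limit. As in Theorem~4.2 I would first establish \eqref{Fpr2} under the restriction on $g$ (say $g\ge1$) for which the Appendix~C estimates are cleanest, and then remove it: for fixed $r$ the limit function $F(g;r,2k)$ is real-analytic in $g>0$ and, by \eqref{Fbk}, decays exponentially in $k$, while the $\Gamma$-quotient is holomorphic in $g$ near the positive real axis and grows at most polynomially in $k$; hence the integral defines a holomorphic function of $g$ in a complex neighborhood of $(0,\infty)$, and \eqref{Fpr2} extends to all $g\in(0,\infty)$ by analytic continuation.
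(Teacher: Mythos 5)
Your proposal follows essentially the same route as the paper's proof: the same substitution into \eqref{prformalt}, the same factor-by-factor $G\to\Gamma$ degeneration of the kernel, prefactor and weight (which the paper organizes by rewriting all fourteen $G$-factors in terms of the function $\cG$ of \eqref{defcG}, whereupon the $\beta$-powers cancel and the constant $1/8\pi$ emerges as $\tfrac12\cdot\tfrac{1}{4\pi}$), and the same dominated-convergence justification combining the uniform decay bound \eqref{Jbk} with the two-sided Appendix~C estimates \eqref{cGbo}--\eqref{mcGsp}, whose exponential factors cancel between numerator and denominator to leave the polynomially bounded majorant you describe. The only deviation is your closing restriction to $g\ge 1$ plus analytic continuation, which is superfluous here: unlike the weight-function bound \eqref{west} of Prop.~B.2 needed for Theorem~4.2, the Appendix~C bounds hold for all $g>0$, so the argument yields \eqref{Fpr2} directly for all $g\in(0,\infty)$.
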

\begin{proof}
We first rewrite the integrand in~\eqref{prformalt} by using (cf.~\eqref{w} and \eqref{c}):
\be
w(2a-b;v)= \prod_{\de=+,-}G(ia+\de v)G(ia-ib+\de v).
\ee
Then we substitute $a_+=\pi,a_-=\beta,b=\beta g$, so that the right-hand side of~\eqref{prformalt} equals the $v$-integral of
\be\label{integ}
 J(\beta g;x,v)\frac{G(i\pi/2+i\beta/2-i\beta g)^2 \prod_{\de=+,-}G(i\pi/2+i\beta/2 +\de v)G(i\pi/2+i\beta/2-i\beta g+\de v)}{2\prod_{\de_1,\de_2,\de_3=+,-}G(i\pi/2+i\beta/2+(\de_1 t+\de_2 u+\de_3 v-i\beta g)/2)},
\ee
with $G(z)=G(\pi,\beta;z)$.

Substituting
\be
x=r,\ \ (t,u,v)=\beta(p,q,k),
\ee
we can rewrite~\eqref{prformalt} as
\begin{multline}
J(\pi,\beta,\beta g;r,\beta p)J(\pi,\beta,\beta g;r,\beta q)  = \frac{1}{2} \int_0^{\infty}dk J(\pi,\beta,\beta g;r,\beta k)
\\  
  \times\frac{\cG(-i g)^2 \prod_{\de=+,-}\cG(\de k)\cG(-i g+\de k)}{4\pi\prod_{\de_1,\de_2,\de_3=+,-}\cG((\de_1 p+\de_2 q+\de_3 k-i g)/2)},
\end{multline}
where $\cG(z)=\cG(\beta;z)$, cf.~\eqref{defcG}. When we now formally use  the limits~\eqref{JF} and~\eqref{cGlim}, then we obtain~\eqref{Fpr2}. The pertinent interchange of limits can be readily justified by using the uniform bounds \eqref{cGbo}--\eqref{mcGsp} and~\eqref{Jbk} to obtain an~$L^1((0,\infty))$-function dominating the pointwise convergence.
\end{proof}

In the special cases $g=1/2$ and $g=1$ the product formula \eqref{Fpr2} was first obtained by Mizony in a somewhat different form. More precisely, in Section 3 of \cite{Miz76} he considers the function
\be
\phi(r,x)\equiv {}_2F_1(1/4+ir/2,1/4-ir/2;1;-\sinh^2 x).
\ee
Specializing \eqref{FHypergRep} to $g=1/2$, and using the quadratic transformation in Eq.~15.8.18 in \cite{Dig10} for ${}_2F_1$ and the reflection equation for the gamma function to rewrite the resulting expression, we obtain
\be
\phi(r,x)=\frac{2}{\pi}\cosh(\pi r)F(1/2;x,2r).
\ee
Then Mizony's product formula
\be\label{MizProdF}
\phi(r,x)\phi(s,x)=\frac{1}{2\pi}\int_0^\infty a(r,s,t)\phi(t,x)|c(t)|^{-2}dt,
\ee
with $|c(t)|^{-2}=\pi t\tanh(\pi t)$ and $a(r,s,t)$ given by the proposition on page 5 of \cite{Miz76}, is readily seen to amount to \eqref{Fpr2} for $g=1/2$ (up to a factor 16). In Section 4, he also presents the corresponding product formula for the function
\be
\phi(r,x)\equiv\frac{\sin(rx)}{r\sinh x}=\frac{2}{\pi}\frac{\sinh(\pi r)}{r}F(1;x,2r),
\ee
and a direct computation reveals that it is equivalent to \eqref{Fpr2} with $g=1$. (The expression for $|c(r)|^{-2}$ on page 11 of \cite{Miz76} should read $|c(r)|^{-2}=r^2$, as can be inferred from the $c$-function definition on page 3.)

We mention that we can use the bounds in Appendix~C in a similar way as in the above proof to arrive at a representation of $F(g;r,2k)$ in terms of the $\Gamma$-function. Specifically, combining~\eqref{Jsym} and~\eqref{Jbeta}, we first obtain the alternative $J$-representation
\be
J(\pi,\beta,\beta g;r,\beta k)=\frac{\cG(-ig)^2}{8\pi}\int_{\R}ds\,\frac{\exp(isr)}{\prod_{\de_1,\de_2=+,-}\cG((\de_1 s+\de_2 k-ig)/2)}.
\ee
Taking $\beta\to 0$, we now get
\be\label{Frep2}
F(g;r,2k)=\frac{1}{8\pi \Gamma(g)^2}\int_{\R}ds \exp(isr)\prod_{\de_1,\de_2=+,-}\Gamma((g+i\de_1 s+i\de_2 k )/2).
\ee
(This second representation corresponds to (4.47) in~\cite{R11}, whereas the first one given by \eqref{JFlim} corresponds to (4.48).)

We proceed to obtain two distinct limits of Theorem~3.1 involving the joint two-particle eigenfunction
\be\label{F2}
F_2(g;r,k)=\exp(i(r_1+r_2)(k_1+k_2)/2)F(g;r_1-r_2,k_1-k_2).
\ee
(This function coincides with the function $F_2(\lambda;t,u)$ we employed in~\cite{HR15}, cf.~Eq.~(64) in~{\it loc.~cit.}) The first one is obtained by choosing parameters
\be\label{subpar}
a_+=\pi,\ a_-=\beta,\ b=\beta g,
\ee
and changing variables
\be\label{sub1}
x=r,\ y=\beta k/2,
\ee
in \eqref{intEq} and then taking $\beta$ to zero.

\begin{theorem}
Letting $(g,r,k)\in (0,\infty)\times\R^2\times \R^2$, we have  
\be\label{intEqnr1}
\int_{\R^2} dz \left(\frac{4\sinh^2(z_1-z_2) }{\prod_{j,l=1,2}[2\cosh(r_j-z_l)]}\right)^gF_2(g;z,k)
=2\mu_0(g;k)  F_2(g;r,k),
\ee
where
\be\label{defmu0}
\mu_0(g;k)\equiv \frac{\prod_{j=1}^2\prod_{\de=+,-}\Gamma((i\de k_j+g)/2)}{4\Gamma(g)^2}.
\ee
\end{theorem}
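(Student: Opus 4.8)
The plan is to obtain \eqref{intEqnr1} as the $\beta\to 0$ limit of the integral equation \eqref{intEq} of Theorem~3.1, under the substitution \eqref{subpar}--\eqref{sub1}, that is $a_+=\pi$, $a_-=\beta$, $b=\beta g$, $x=r$, $y=\beta k/2$, while keeping the two-dimensional integration variable $z$ unscaled. It is this choice (spatial arguments $r,z$ of order one, spectral argument $y=\beta k/2$ scaled) that makes the kernel and weight degenerate to hyperbolic functions rather than to gamma factors. First I would record the pointwise limits of the three factors in the integrand. The kernel $\cS_2(\beta g;r,z)=\prod_{j,l=1,2}G(\pi,\beta;r_j-z_l-i\beta g/2)/G(\pi,\beta;r_j-z_l+i\beta g/2)$ is a product of $G$-ratios, so the hyperbolic limit \eqref{GGlim} gives $\cS_2\to\prod_{j,l}[2\cosh(r_j-z_l)]^{-g}$. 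Applying \eqref{GGlim} to the two $c$-factors in $w=1/(c(b;\cdot)c(b;-\cdot))$ (equivalently invoking \eqref{wlim}) yields $w(\beta g;z_1-z_2)\to[2\sinh(z_1-z_2)]^{2g}$, so that $w\,\cS_2$ tends to the kernel $\big(4\sinh^2(z_1-z_2)/\prod_{j,l}[2\cosh(r_j-z_l)]\big)^g$ of \eqref{intEqnr1}. Finally, \eqref{J2J} writes $J_2(\beta g;z,\beta k/2)$ as a unimodular plane wave times $J(\pi,\beta,\beta g;z_1-z_2,\beta(k_1-k_2)/2)$, whose limit is $F(g;z_1-z_2,k_1-k_2)$ by Proposition~4.1; hence $J_2\to F_2(g;z,k)$ in view of \eqref{F2}.

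The right-hand side requires only a pointwise computation, with no interchange of limits. The same reasoning gives $J_2(\beta g;r,\beta k/2)\to F_2(g;r,k)$, while the eigenvalue \eqref{defmu} reads $\mu(\beta g;\beta k/2)=\pi\beta\,G(ia-ib)^2\prod_{j,\de}G(\de\beta k_j/2+i\beta g/2-ia)$ with $a=(\pi+\beta)/2$. Each of these arguments tends to the special point $-ia_+/2$, so I would rewrite this finite product of hyperbolic gamma functions through the rescaled gamma function and appeal to the $G\to\Gamma$ limit \eqref{defH}--\eqref{Plim} (equivalently \eqref{cGlim}) to obtain $\mu(\beta g;\beta k/2)\to\mu_0(g;k)$ as in \eqref{defmu0}; here the prefactor $\pi\beta\,G(ia-ib)^2$ is designed to produce the denominator $4\Gamma(g)^2$.

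The substance of the proof is the justification, by dominated convergence, of interchanging the $\beta\to0$ limit with the $z$-integration. As in the proof of Theorem~4.2 I would first assume $g\ge 1$. For such $g$, Propositions~B.1 and~B.2 supply bounds, uniform for $\beta$ small, majorizing $w(\beta g;z_1-z_2)\,\cS_2(\beta g;r,z)$ by $C[2\sinh(z_1-z_2)]^{2g}\prod_{j,l}[2\cosh(r_j-z_l)]^{-g}$, while the unimodular factor in \eqref{J2J} combined with the $\beta$-uniform eigenfunction bound \eqref{Jbr} gives $|J_2(\beta g;z,\beta k/2)|\le C|z_1-z_2|/\sinh(g|z_1-z_2|)$, uniformly in $\beta\in(0,\beta_0]$ and in $k$. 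The crux, and the main obstacle, is to verify that the product of these majorants lies in $L^1(\R^2)$. Passing to sum and difference variables $s=z_1+z_2$, $\zeta=z_1-z_2$, the four $\cosh$-factors of $\cS_2$ provide exponential decay in $s$ and, after integration in $s$, compensate the exponential growth $[2\sinh(z_1-z_2)]^{2g}$ of the weight up to a polynomial factor in $\zeta$; the remaining integrability in $\zeta$ is then secured by the decay $|\zeta|/\sinh(g|\zeta|)$ of $F$. It is this last, purely eigenfunction-driven decay that makes the $\beta$-uniform bound \eqref{Jbr} of Proposition~4.1 indispensable here.

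With the dominating function in place, dominated convergence establishes \eqref{intEqnr1} for all $g\ge 1$. To remove this restriction I would argue by analytic continuation, exactly as in the proof of Theorem~4.2: for fixed $r,k$ both sides are real-analytic in $g$ on $\re g>0$---the left-hand integrand is $g$-analytic and, using the bound \eqref{Fbr} for $F$, the integral converges locally uniformly in $g$, while the right-hand side is analytic through its explicit $\Gamma$-function form---so the identity, once known for $g\ge1$, persists for $g\in(0,1)$.
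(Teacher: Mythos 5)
Your proposal is correct and takes essentially the same route as the paper: the same substitutions \eqref{subpar}--\eqref{sub1} in Theorem~3.1, the same pointwise limits \eqref{GGlim}, \eqref{wlim}, \eqref{JF} via \eqref{J2J} and \eqref{F2}, the same $\cG$-based rewriting of the eigenvalue with \eqref{cGlim}, and the same control of the limit by dominated convergence for $g\ge 1$ (Propositions~B.1, B.2 and the $\beta$-uniform bound \eqref{Jbr}) followed by analytic continuation in $g$ --- precisely the argument the paper invokes by its reference to the proof of Theorem~4.2. You merely spell out the two-dimensional integrability in sum and difference variables and the continuation step, which the paper leaves implicit in that cross-reference.
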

\begin{proof}
With the above substitutions, the  $\beta\to 0$ limit of the left-hand side of \eqref{intEq} yields the left-hand side of~\eqref{intEqnr1}, cf.~the proof of Theorem~4.2. For the eigenvalue~\eqref{defmu} we combine the substitutions with the reparametrization~\eqref{defcG} to get
\be
\cG(-ig)^2/4\prod_{j=1}^2\prod_{\de=+,-}\cG((\de k_j-ig)/2).
\ee
Invoking \eqref{cGlim}, \eqref{J2J}, \eqref{F2}  and \eqref{JF}, we see that \eqref{intEqnr1} results.
\end{proof}

For the second limit we start from \eqref{intEqalt}. 
We choose once more parameters given by~\eqref{subpar}, but now change variables
\be\label{sub2}
y=r,\ z=\beta p/2,\ x=\beta k/2.
\ee
Then it is clear from~\eqref{JF} that the function $J_2(b;y,x)$ converges to $F_2(g;r,k)$ for $\beta \to 0$. But we need a multiplicative renormalization for the right-hand side to have a finite limit. Specifically, from \eqref{defmu} we see that the substitutions entail
\be\label{muren}
a_-^{-1}G(ia-ib)^2\mu(2a-b;y)\to \pi \prod_{j=1}^2\prod_{\de=+,-}G(\pi,\beta;\de r_j-i\beta g/2),
\ee
so that we can invoke \eqref{GGlim}. Thus we arrive at the right-hand side of the identity in the following theorem.

\begin{theorem}
Letting $(g,r,k)\in (0,\infty)\times\R^2\times \R^2$, we have  
\begin{multline}\label{intEqnr2}
\frac{1}{16\pi \Gamma(g)^2}\int_{\R^2} dp \frac {\prod_{j,l=1,2}\prod_{\de=+,-}\Gamma((i\de (k_j-p_l)+g)/2)}{\prod_{\de=+,-}\Gamma(i\de (p_1-p_2)/2)\Gamma(i\de (p_1-p_2)/2+g)}F_2(g;r,p)
\\
= \frac{2\pi}{[4\cosh (r_1) \cosh (r_2)]^g}F_2(g;r,k).
\end{multline}
\end{theorem}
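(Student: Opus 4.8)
The plan is to derive \eqref{intEqnr2} as the $\beta\to0$ limit of the integral equation \eqref{intEqalt} of Theorem~3.2, following the same pattern as the proofs of Theorems~4.2--4.4. Concretely, I would impose the substitution \eqref{subpar} and the change of variables \eqref{sub2}, multiply both sides of \eqref{intEqalt} by the renormalizing factor $a_-^{-1}G(ia-ib)^2$ singled out in \eqref{muren}, and let $\beta\to0$. The pointwise limits of both members are dictated by the hyperbolic limit \eqref{GGlim}, the $\Gamma$-limit \eqref{cGlim}, and the eigenfunction limit \eqref{JF}; the genuine work is the justification of the interchange of the limit with the $p$-integration.

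For the right-hand side $2\mu(2a-b;y)J_2(b;y,x)$ I would first renormalize the eigenvalue. Since $G(ia-ib)G(ib-ia)=1$ by \eqref{Grefl}, the factor $a_-^{-1}G(ia-ib)^2$ cancels the $G(ib-ia)^2$ and the $a_-$ hidden in $\mu(2a-b;y)$ of \eqref{defmu}, leaving $\pi\prod_{j=1}^2\prod_{\de=+,-}G(\pi,\beta;\de r_j-i\beta g/2)$ as recorded in \eqref{muren}. Applying \eqref{Grefl} once more to the $\de=-$ factors turns each $\de$-product into $G(\pi,\beta;r_j-i\beta g/2)/G(\pi,\beta;r_j+i\beta g/2)$, so that \eqref{GGlim} gives $\pi\prod_j[2\cosh r_j]^{-g}=\pi[4\cosh r_1\cosh r_2]^{-g}$. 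At the same time $J_2(\beta g;r,\beta k/2)\to F_2(g;r,k)$ on combining \eqref{J2J}, \eqref{F2} and \eqref{JF}. Thus the right-hand side converges to $2\pi[4\cosh r_1\cosh r_2]^{-g}F_2(g;r,k)$, the right-hand side of \eqref{intEqnr2}.

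For the left-hand side I would rewrite the kernel in terms of the reparametrized gamma function $\cG(\beta;\cdot)$ of \eqref{defcG}. Using \eqref{w} one has $w(2a-b;z_1-z_2)=\prod_{\de=+,-}G(ia+\de(z_1-z_2))G(ia-ib+\de(z_1-z_2))$, and applying \eqref{Grefl} to the four numerator gamma factors of $\cS_2(2a-b;x,z)$ (cf.~\eqref{cS}) expresses the entire integrand as a quotient of $\cG$'s times $J_2(\beta g;r,\beta p/2)$. The crucial point is the bookkeeping of the $\beta$-dependent normalization built into \eqref{defcG}: tracking it across the four factors of $w$, the eight of $\cS_2$ and the two of $G(ia-ib)^2$ (with signs $-,+,-$ fixed by whether \eqref{Grefl} was used), together with the explicit prefactor $a_-^{-1}(\beta/2)^2=\beta/4$ from the renormalization and the Jacobian of $z=\beta p/2$, makes all powers of $\beta$ combine into the numerical constant $1/16\pi$ --- exactly the mechanism producing the constants in Theorems~4.2 and~4.3. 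Letting $\beta\to0$ and invoking \eqref{cGlim} then converts the $\cG$-quotient into $\prod_{j,l,\de}\Gamma((i\de(k_j-p_l)+g)/2)$ divided by $\Gamma(g)^2\prod_{\de=+,-}\Gamma(i\de(p_1-p_2)/2)\Gamma(i\de(p_1-p_2)/2+g)$, while \eqref{JF} replaces $J_2$ by $F_2(g;r,p)$, reproducing the integrand of \eqref{intEqnr2} pointwise.

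The hard part is the last step, the uniform-in-$\beta$ domination needed for the dominated convergence theorem. As in the proof of Theorem~4.3, I would bound the $\cG$-quotient by the $\beta$-independent estimates \eqref{cGbo}--\eqref{mcGsp} of Appendix~C and control $J_2(\beta g;r,\beta p/2)$ by the uniform bound \eqref{Jbk}. The subtlety is that \eqref{Jbk} only yields exponential decay in the difference variable $p_1-p_2$ and gives nothing in the center-of-mass direction $p_1+p_2$; the required decay there must come entirely from the Stirling-type decay of the numerator gammas $\prod_{j,l,\de}\Gamma((i\de(k_j-p_l)+g)/2)$. One must further check that this decay overwhelms the exponential growth of the denominator factors $1/\Gamma(i\de(p_1-p_2)/2)$ in $|p_1-p_2|$. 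Verifying that all of these estimates assemble into a single $L^1(\R^2)$ majorant independent of small $\beta$ is the crux; granting it, the dominated convergence theorem yields \eqref{intEqnr2}.
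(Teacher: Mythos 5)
Your proposal matches the paper's proof: the paper likewise obtains \eqref{intEqnr2} by substituting \eqref{subpar} and \eqref{sub2} into \eqref{intEqalt}, multiplying by the renormalizing factor $a_-^{-1}G(ia-ib)^2$ of \eqref{muren}, taking $\beta\to0$ with the right-hand side controlled by \eqref{GGlim} and \eqref{JF}, and handling the left-hand side exactly as in the proof of Theorem~4.3 (conversion to $\cG$-ratios via \eqref{defcG}, then dominated convergence using the uniform bounds \eqref{cGbo}--\eqref{mcGsp} together with \eqref{Jbk}). Your constant bookkeeping (net prefactor $1/16\pi$) is correct, and your observation that \eqref{Jbk} supplies decay only in $p_1-p_2$ so that integrability in the center-of-mass direction must come from the numerator gamma factors is also correct --- indeed it makes explicit a point the paper leaves implicit in its appeal to the Theorem~4.3 argument.
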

\begin{proof}
It remains to handle the left-hand side of~\eqref{intEqalt}, multiplied by the factor~$a_-^{-1}G(ia-ib)^2$, cf.~the renormalization \eqref{muren}. This can be done just as in the proof of Theorem~4.3, and the result is~\eqref{intEqnr2}.
\end{proof}

We proceed to obtain the nonrelativistic counterparts of the results from Sections~2 and~3 that involve Hilbert space analysis. To start with, we define an auxiliary unitary transform
\be
\cF_{\beta}(g)\, : \, \cC\subset L^2((0,\infty),dk) \to L^2((0,\infty),dr),\ \ \ \beta g\in(0,\pi+\beta),
\ee
\be
(\cF_{\beta}(g)\psi)(r)\equiv \left(\frac{1}{2\pi}\right)^{1/2} \int_0^\infty \rF(\pi,\beta,\beta g;r,\beta k)\psi(k)dk,\ \ \psi\in\cC,
\ee
cf.~\eqref{cF}--\eqref{cFE}. Using~\eqref{rFJ} and~\eqref{defcG}, we readily obtain
\begin{multline}
\rF(\pi,\beta,\beta g;r,\beta k)=\frac{2}{\cG(\beta;-ig)}w(\pi,\beta,\beta g;r)^{1/2}J(\pi,\beta,\beta g;r,\beta k)
\\
\times \left(\prod_{\de=+,-}\cG(\beta;\de k)\cG(\beta;\de k-ig)\right)^{1/2}.
\end{multline}
From this we deduce that for all $(g,r,k)\in (0,\infty)^3$ we have
\begin{multline}\label{rF0}
 \rF_0(g;r,k)\equiv \lim_{\beta \to 0} \rF(\pi,\beta,\beta g;r,\beta k) 
 \\
  =  2\Gamma(g)(2\sinh r)^gF(g;r,2k)\left(\prod_{\de=+,-}\Gamma(i\de k)\Gamma(i\de k+g)\right)^{-1/2}.
\end{multline}

The operator on $\cC$ defined by
\be
(\cF_{0}(g)\psi)(r)\equiv \left(\frac{1}{2\pi}\right)^{1/2} \int_0^\infty \rF_0(g;r, k)\psi(k)dk,\ \ \psi\in\cC,
\ee
gives rise to a unitary transform 
\be
\cF_{0}(g)\, : \,   L^2((0,\infty),dk) \to L^2((0,\infty),dr),\ \ \ g> 0.
\ee
 (It equals the sine transform for $g=1$ and its $g\to 0$ limit is the cosine transform.) Although this assertion will cause no surprise, a complete proof is not immediate from our results. However, the unitarity of $\cF_0(g)$ follows by specialization from known unitarity properties of the Jacobi function transform~\cite{Koo84}.
 
Next, we define a family of integral operators on~$L^2((0,\infty))$ by setting
\be\label{cJ1}
(\cJ_t(g)\psi)(r)\equiv \int_0^{\infty} ds\, \frac{ w_0(g;r)^{1/2} w_0(g;s)^{1/2}}{\prod_{\de_1,\de_2=+,-}[2\cosh((t+\de_1 s+\de_2 r)/2)]^g} \psi(s), 
\ee
\be
w_0(g;r)\equiv (2\sinh r)^{2g},\ \ \ r>0,
\ee
where $g>0$, $t\ge 0$ and the implied logarithm is chosen real. Hence the kernel of the integral operator is positive for all $(g,r,s,t)\in(0,\infty)^3\times [0,\infty)$. Since it has exponential decay for $s\to \infty$, the integral is absolutely convergent, but it is not clear that the image function is square-integrable. In fact, however, the family consists of bounded self-adjoint operators satisfying
\be
[\cJ_{t_1}(g),\cJ_{t_2}(g)]=0,\ \ \ t_1,t_2 \ge 0.
\ee
This is an obvious consequence of the following theorem.

\begin{theorem}
Let $g>0$ and $t\ge 0$. Then the operator~$\cJ_t(g)$ is bounded, and the operator $\cF_0(g)^*\cJ_t(g)\cF_0(g)$ on $L^2((0,\infty),dk)$ acts as multiplication by the function $2F(g;t,2k)$. 
\end{theorem}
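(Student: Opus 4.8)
The plan is to retrace the proof of Theorem~2.6, with the product formula~\eqref{Fpr1} and the limit relation~\eqref{rF0} now playing the roles of~\eqref{prform} and~\eqref{rFJ}. First I would relabel~\eqref{Fpr1}, interchanging the bound integration variable with one of the two free variables and invoking evenness of $\cosh$ to rewrite the denominator as the product $\prod_{\de_1,\de_2=+,-}[2\cosh((t+\de_1 r+\de_2 s)/2)]^g$, which is symmetric under $r\leftrightarrow s$. This turns~\eqref{Fpr1} into
\be
2F(g;t,2k)F(g;r,2k)=\int_0^{\infty}ds\,\frac{[2\sinh s]^{2g}}{\prod_{\de_1,\de_2=+,-}[2\cosh((t+\de_1 r+\de_2 s)/2)]^g}\,F(g;s,2k).
\ee

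Next I would turn this into a generalized eigenvalue relation for $\cJ_t(g)$. Multiplying through by $(2\sinh r)^g=w_0(g;r)^{1/2}$ and splitting $(2\sinh s)^{2g}=w_0(g;s)^{1/2}(2\sinh s)^g$, I then use~\eqref{rF0} in the form
\be
(2\sinh r)^gF(g;r,2k)=\frac{1}{2\Gamma(g)}\Big(\prod_{\de=+,-}\Gamma(i\de k)\Gamma(i\de k+g)\Big)^{1/2}\rF_0(g;r,k)
\ee
to replace the factors $(2\sinh r)^gF(g;r,2k)$ and $(2\sinh s)^gF(g;s,2k)$ by $\rF_0$, leaving the eigenvalue factor $F(g;t,2k)$ untouched. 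The crucial point is that the $k$-dependent normalization prefactor involves neither $r$, $s$ nor $t$, so it cancels, leaving
\be
(\cJ_t(g)\rF_0(g;\cdot,k))(r)=2F(g;t,2k)\,\rF_0(g;r,k),
\ee
where the $r\leftrightarrow s$ symmetry of the $\cJ_t(g)$-kernel is used once more to identify the remaining integral with the action of $\cJ_t(g)$.

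From here the conclusion follows exactly as in the passage from~\eqref{cIeig} to~\eqref{cFf}: I would integrate this relation against a test function $\psi\in\cC$ over~$k$ and interchange the $k$-integration with $\cJ_t(g)$, obtaining $\cJ_t(g)\cF_0(g)\psi=2\cF_0(g)(F(g;t,2\cdot)\psi)$. Since $\cF_0(g)$ is unitary, applying $\cF_0(g)^*$ yields multiplication by $2F(g;t,2k)$ on the dense domain~$\cC$, and the boundedness of $\cJ_t(g)$ then follows because this multiplier is bounded, by the uniform decay bound~\eqref{Fbk}.

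The main obstacle is the rigorous justification of the interchange of the $k$-integration with the operator $\cJ_t(g)$ underlying the identity $\cJ_t(g)\cF_0(g)\psi=2\cF_0(g)(F(g;t,2\cdot)\psi)$. Since $\psi$ has compact support in~$k$, this reduces to a Fubini estimate in which $s$ ranges over $(0,\infty)$ and $k$ over the compact support of~$\psi$, and the required integrable majorant comes from combining the bound~\eqref{Fbr} on $F(g;s,2k)$ with the growth and decay of the $\sinh$- and $\cosh$-factors: together they force the product of the $\cJ_t(g)$-kernel and $\rF_0(g;s,k)$ to decay like $s\exp(-gs)$ as $s\to\infty$, uniformly over that compact $k$-range. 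A minor but genuine point worth flagging is that, in contrast to the self-adjoint involution $\cF(b)$ of Section~2, the transform $\cF_0(g)$ maps $L^2((0,\infty),dk)$ to $L^2((0,\infty),dr)$, so the diagonalizing conjugation must be written with $\cF_0(g)^*$; correspondingly, the genuinely non-obvious content here is the boundedness of $\cJ_t(g)$, which is invisible at the level of its kernel and emerges only through the unitary equivalence to a bounded multiplication operator.
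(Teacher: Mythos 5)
Your proposal is correct and follows essentially the same route as the paper's proof: swap the bound variable with a free variable in the product formula~\eqref{Fpr1}, convert $F$ to $\rF_0$ via~\eqref{rF0} (with the $k$-dependent normalization canceling) to obtain precisely the generalized eigenvalue relation~\eqref{rF0id1}, then integrate against $\psi$ and use unitarity of~$\cF_0(g)$, with boundedness of $\cJ_t(g)$ read off from the boundedness of the multiplier. Your added care about the Fubini interchange and the remark that the conjugation must use $\cF_0(g)^*$ (since $\cF_0(g)$ is not an involution, unlike $\cF(b)$) are refinements of, not departures from, the paper's argument.
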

\begin{proof}
When we swap $t$ and $s$ in~\eqref{Fpr1} and use~\eqref{rF0}, the resulting identity can be written
\begin{multline}\label{rF0id1}
2F(g;t,2k)\rF_0(g;r,k)=
\\
\int_0^{\infty}ds \, \frac{  w_0(g;r)^{1/2} w_0(g;s)^{1/2}}{\prod_{\de_1,\de_2=+,-}[2\cosh((t+\de_1 s+\de_2 r)/2)]^g} \rF_0(g;s,k).
\end{multline}
Integrating this with $\psi(k)$, $\psi\in L^2((0,\infty))$, we deduce
\be
2\cF_0(g)F(g;t, 2\,\cdot)\psi=\cJ_t(g)\cF_0(g)\psi.
\ee
From this the assertion is plain.
\end{proof}

To continue, we define a dual family of integral operators on $L^2((0,\infty))$ by
\begin{multline}
(\hat{\cJ}_q(g)\phi)(k)\equiv 
 \int_0^{\infty}dp 
 \,  \hat{w}_0(g;k)^{1/2} \hat{w}_0(g;p)^{1/2}\phi(p)
\\
\times \prod_{\de_1,\de_2,\de_3=+,-}\Gamma((g+i\de_1 p+i\de_2 q+i\de_3 k)/2) , 
 \end{multline}
 \be
\hat{w}_0(g;k)\equiv
1/4\pi \Gamma(g)^2\prod_{\de=+,-}\Gamma(i\de k)\Gamma(i\de k+g),
\ee
 where $g>0$, $q\ge 0$ and the positive square root is taken. When we combine~\eqref{Gabo} and~\eqref{Garefl}, we deduce exponential decay of the kernel of the integral operator for $p\to \infty$, so the integral is absolutely convergent. As in previous cases, it is not obvious that the image function is square-integrable. Once more, however, the family actually consists of bounded self-adjoint operators satisfying
\be
[\hat{\cJ}_{q_1}(g),\hat{\cJ}_{q_2}(g)]=0,\ \ \ q_1,q_2 \ge 0,
\ee
as is immediate from our next theorem.

\begin{theorem}
Let $g>0$ and $q\ge 0$. Then the operator~$\hat{\cJ}_q(g)$ is bounded, and the operator $\cF_0(g)\hat{\cJ}_q(g)\cF_0(g)^*$ on $L^2((0,\infty),dr)$ acts as multiplication by the function $2F(g;r,2q)$. 
\end{theorem}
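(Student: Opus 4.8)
The plan is to reproduce the proof of the preceding theorem essentially verbatim, but now starting from the second product formula~\eqref{Fpr2} and using the limit relation~\eqref{rF0}. The key observation is that~\eqref{Fpr2}, read with $r$ held fixed, already has the shape of an eigenvalue equation for the momentum-space operator $\hat{\cJ}_q(g)$: the factor $F(g;r,2q)$ on the left is (half) the prospective eigenvalue, the remaining left factor is the prospective eigenfunction evaluated at one momentum, and the integral on the right is $\hat{\cJ}_q(g)$ acting in the momentum variable on $F(g;r,2\,\cdot\,)$. So the first step is to regard $r$ as a continuous spectral label and $\rF_0(g;r,\cdot)$ as the candidate eigenfunction of $\hat{\cJ}_q(g)$.

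The heart of the matter is then to convert every factor $F(g;r,2\,\cdot\,)$ in~\eqref{Fpr2} into $\rF_0$ by means of~\eqref{rF0}. Writing $E_\xi\equiv(\prod_{\de}\Gamma(i\de\xi)\Gamma(g+i\de\xi))^{1/2}$, the relation~\eqref{rF0} reads $F(g;r,2\xi)=E_\xi\,\rF_0(g;r,\xi)/(2\Gamma(g)(2\sinh r)^g)$, so the asymmetric prefactor $2\Gamma(g)(2\sinh r)^g$ depends only on the fixed label $r$ and cancels between the two sides. The surviving factor $E_\xi$ at each integrated momentum combines with the denominator $\prod_{\de}\Gamma(i\de\xi)\Gamma(g+i\de\xi)$ in~\eqref{Fpr2} to give $E_\xi^{-1}$, and together with the prefactor $1/8\pi\Gamma(g)^2$ these reassemble precisely into the symmetric square-root weights $\hat{w}_0(g;\cdot)^{1/2}$ of $\hat{\cJ}_q(g)$ (using $\hat{w}_0(g;\xi)^{1/2}=1/((4\pi)^{1/2}\Gamma(g)E_\xi)$); the residual factor of $1/2$ becomes the announced $2$. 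The outcome is the nonrelativistic analog of~\eqref{rF0id1}, namely the eigenvalue equation $\hat{\cJ}_q(g)\,\rF_0(g;r,\cdot)=2F(g;r,2q)\,\rF_0(g;r,\cdot)$, which exhibits $\rF_0(g;r,\cdot)$ as an eigenfunction of $\hat{\cJ}_q(g)$ with eigenvalue $2F(g;r,2q)$.

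Having this identity, the Hilbert-space part is routine. I would integrate the eigenvalue equation against a test function $\phi\in\cC$ in the spectral label $r$, interchange the two integrations (justified by the absolute convergence of the kernel integral, which rests on the $\Gamma$-asymptotics already used to define $\hat{\cJ}_q(g)$, together with the bounds~\eqref{Fbr}--\eqref{Fbk} on $F$), and read off from the definition of $\cF_0(g)$ the intertwining relation $\hat{\cJ}_q(g)\cF_0(g)^*\phi=2\cF_0(g)^*\big(F(g;\,\cdot\,,2q)\phi\big)$. Applying the unitary $\cF_0(g)$ on the left and using $\cF_0(g)\cF_0(g)^*=1$ then gives $\cF_0(g)\hat{\cJ}_q(g)\cF_0(g)^*=2F(g;\,\cdot\,,2q)$ as a multiplication operator. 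Boundedness of $\hat{\cJ}_q(g)$ is automatic, since it is unitarily equivalent to multiplication by the function $2F(g;\,\cdot\,,2q)$, which is bounded on $(0,\infty)$ by~\eqref{Fbr}.

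The main obstacle is the bookkeeping in the second step. Unlike the relativistic transform $\cF(b)$, the limit transform $\cF_0(g)$ is no longer self-dual: it maps between two distinct copies of $L^2((0,\infty))$. One must therefore keep careful track of which $\Gamma$-factors produced by~\eqref{rF0} belong to the fixed position variable $r$ and which constitute the momentum weight $\hat{w}_0$. It is exactly this separation that makes the $r$-dependent prefactors cancel and forces the remaining factors to assemble into the symmetric kernel of $\hat{\cJ}_q(g)$ and the scalar eigenvalue $2F(g;r,2q)$; once the eigenvalue equation is secured, no further difficulty arises.
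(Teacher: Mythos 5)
Your proposal is correct and takes essentially the same route as the paper: substituting \eqref{rF0} into the product formula \eqref{Fpr2} (with the integration variable relabeled) to obtain the eigenvalue identity \eqref{rF0id2}, then integrating against a function of the spectral variable $r$ to get the intertwining relation $\hat{\cJ}_q(g)\cF_0(g)^*\phi=2\cF_0(g)^*\bigl(F(g;\cdot,2q)\phi\bigr)$, and concluding boundedness from unitary equivalence with multiplication by the bounded function $2F(g;\cdot,2q)$. Your bookkeeping of the $\Gamma$-factors, the weights $\hat{w}_0^{1/2}$, and the factor of $2$ reproduces the paper's identity exactly.
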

\begin{proof}
Swapping $k$ and $p$ in~\eqref{Fpr2} and using~\eqref{rF0}, we deduce
\begin{multline}\label{rF0id2}
2F(g;r,2q)\rF_0(g;r,k)=
\int_0^{\infty}dp \, \hat{w}_0(g;k)^{1/2} \hat{w}_0(g;p)^{1/2} \rF_0(g;r,p)
\\
\times \prod_{\de_1,\de_2,\de_3=+,-}\Gamma((g+i\de_1 p+i\de_2 q+i\de_3 k)/2).
\end{multline}
Integrating this with $\phi(r)$, $\phi\in L^2((0,\infty))$, we obtain
\be
2\cF_0(g)^*F(g;\cdot, 2q)\phi=\hat{\cJ}_q(g)\cF_0(g)^*\phi,
\ee
and so the theorem follows.
\end{proof}

Passing to the 2-particle case, we define a unitary operator by (continuous extension of)
\be\label{cF20}
\cF_{0,2}(g)
\, :\, \cC_2= C_0^{\infty}(G_2)\subset   L^2(G_2) \to  L^2(G_2),\ \ \ g>0, 
\ee
where
\be\label{cFE20}
(\cF_{0,2}(g)\psi)(r)\equiv \frac{1}{2\pi} \int_{G_2}\rF_{0,2}(g;r,k)\psi(k)dk,\ \ \ \
\psi\in\cC_2,\ \ r\in G_2,
\ee
with
\be
\rF_{0,2}(g;r,k)\equiv \exp(i(r_1+r_2)(k_1+k_2)/2)\rF_0(g;r_1-r_2,(k_1-k_2)/2).
\ee
Now we define two integral operators on $L^2(G_2)$:
\be
(\cJ_2(g)\phi)(r)\equiv
\int_{G_2} dz\,  \frac{w_0(g;r_1-r_2)^{1/2}w_0(g;z_1-z_2)^{1/2} }{\prod_{j,l=1,2}[2\cosh(r_j-z_l)]^g} \phi(z), 
\ee
 \begin{multline}
(\hat{\cJ}_2(g)\psi)(k)\equiv  \int_{G_2} dp\, \hat{w}_0(g;k_1-k_2)^{1/2}\hat{w}_0(g;p_1-p_2)^{1/2}\psi(p)
\\
\times  \prod_{j,l=1,2}\prod_{\de=+,-}\Gamma((i\de (k_j-p_l)+g)/2) ,  
\end{multline}
where $\phi,\psi\in L^2(G_2)$ and $g>0$. We are now prepared for the last theorem of this section.

\begin{theorem}
Letting $g>0$, the operator  $\cF_{0,2}(g)^*\cJ_2(g)\cF_{0,2}(g)$ on $L^2(G_2,dk)$ acts as multiplication by the positive bounded function $\mu_0(g;k)$ given by \eqref{defmu0}. Also,
 the operator $\cF_{0,2}(g)\hat{\cJ}_2(g)\cF_{0,2}(g)^*$ on $L^2(G_2,dr)$ acts as multiplication by~$4\pi/[4\cosh (r_1) \cosh (r_2)]^g $.
\end{theorem}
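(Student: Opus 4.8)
The plan is to follow the template already used for the relativistic Theorem~3.3 and the one-particle Theorems~4.6 and~4.7: I would take the eigenvalue equations for $F_2$ supplied by Theorems~4.4 and~4.5, recast them as kernel identities for the transform kernel $\rF_{0,2}$, integrate against test functions in $\cC_2=C_0^{\infty}(G_2)$, and then read off the multiplication operators using the unitarity of $\cF_{0,2}(g)$ (which I would inherit from that of $\cF_0(g)$ through the tensor-product structure of $\rF_{0,2}$, exactly as $\cF_2(b)$ was obtained from $\cF(b)$). The linchpin is the algebraic consequence of the limit relation~\eqref{rF0}, valid for $r,k\in G_2$,
\be
\rF_{0,2}(g;r,k)=2\Gamma(g)\,[2\sinh(r_1-r_2)]^{g}\,D(k)^{-1/2}F_2(g;r,k),\qquad D(k)\equiv\prod_{\de=+,-}\Gamma(i\de(k_1-k_2)/2)\Gamma(i\de(k_1-k_2)/2+g),
\ee
which lets me trade $F_2$ for $\rF_{0,2}$ inside the integrands. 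Since $\rF_{0,2}$, $D$, and both kernels are invariant under $z_1\leftrightarrow z_2$ (resp.\ $p_1\leftrightarrow p_2$), each integral over $\R^2$ is replaced by twice the integral over $G_2$.

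For the first assertion I would start from~\eqref{intEqnr1}, noting that its prefactor $[4\sinh^2(z_1-z_2)]^{g}$ equals $w_0(g;z_1-z_2)$. Substituting the displayed relation for both $F_2(g;z,k)$ and $F_2(g;r,k)$, the factor $D(k)^{1/2}$ together with the $z$-independent constants cancels between the two sides, one power $[2\sinh(z_1-z_2)]^{g}=w_0(g;z_1-z_2)^{1/2}$ survives to combine with $w_0(g;r_1-r_2)^{1/2}$, and the numerical factors (the $2$ in $2\mu_0$, the $2\Gamma(g)$ from the conversion, and the $2$ from restricting to $G_2$) collapse to leave precisely the kernel of $\cJ_2(g)$ and the clean eigenvalue equation $(\cJ_2(g)\rF_{0,2}(\cdot,k))(r)=\mu_0(g;k)\rF_{0,2}(g;r,k)$. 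Multiplying by $\psi(k)\in\cC_2$ and integrating over $k\in G_2$, with $\cJ_2$ carried under the integral, gives $\cJ_2(g)\cF_{0,2}(g)\psi=\cF_{0,2}(g)(\mu_0(g;\cdot)\psi)$, whence $\cF_{0,2}(g)^*\cJ_2(g)\cF_{0,2}(g)$ is multiplication by $\mu_0(g;k)$; positivity and boundedness of $\mu_0$ follow from~\eqref{defmu0} and the conjugacy relation for $\Gamma$.

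For the second assertion I would start from~\eqref{intEqnr2}, whose denominator is exactly $D(p)$. Converting $F_2(g;r,p)$ and $F_2(g;r,k)$ via the displayed relation, the surviving power $D(p)^{-1/2}$ (with $D(k)^{1/2}$ moved to the eigenvalue side) must be matched against the spectral weights $\hat{w}_0$, and here lies the delicate point: $\rF_{0,2}$ carries the relative momentum $(p_1-p_2)/2$, so $\hat{w}_0$ has to be evaluated at that argument, and tracking this factor $1/2$ correctly—together with the symmetrization factor $2$—is what converts the eigenvalue $2\pi/[4\cosh r_1\cosh r_2]^{g}$ of~\eqref{intEqnr2} into $4\pi/[4\cosh r_1\cosh r_2]^{g}$. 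The resulting identity $(\hat{\cJ}_2(g)\rF_{0,2}(r,\cdot))(k)=(4\pi/[4\cosh r_1\cosh r_2]^{g})\,\rF_{0,2}(g;r,k)$ has a real kernel and real eigenvalue, so it survives conjugation; integrating it against $\phi(r)\in\cC_2$ through the conjugate kernel $\overline{\rF_{0,2}}$ of $\cF_{0,2}(g)^*$ then yields $\cF_{0,2}(g)\hat{\cJ}_2(g)\cF_{0,2}(g)^*$ as the stated multiplication. Note the adjoint sits on the opposite side here because the broken self-duality of the nonrelativistic limit makes $\cF_{0,2}(g)$ unitary but not involutory.

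The \emph{main obstacle} is the weight bookkeeping in the second assertion: one must verify that the Gamma-products generated by~\eqref{rF0} are exactly absorbed by $\hat{w}_0$, keeping scrupulous track of the relative-momentum factor $1/2$ and of the factor $2$ from reducing the $\R^2$-integration to $G_2$, so that all numerical constants collapse to the advertised eigenvalue. The remaining points are routine once this is in place: boundedness and self-adjointness of $\cJ_2(g)$ and $\hat{\cJ}_2(g)$—which are not evident from their definitions—are immediate after they are conjugated into bounded real multiplication operators, the interchange of the integral operators with the $\cF_{0,2}$-integration is justified by the absolute convergence already recorded for the kernels, and the validity of the substitution of the displayed relation is confined to $G_2$, where all $G$-arguments are real and positive.
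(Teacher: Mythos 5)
Your proposal is correct and is essentially the paper's own argument: the paper proves this theorem in one line (``Recalling \eqref{F2} and \eqref{rF0}, this can be read off from \eqref{intEqnr1} and \eqref{intEqnr2}''), and your substitution--symmetrization--unitarity scheme, including the reduction of the $\R^2$-integrals to $G_2$ and the final appeal to unitarity of $\cF_{0,2}(g)$, is exactly what that line compresses; the kernel identities you derive are the ones the paper displays immediately after the theorem.

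One point in your treatment of the second assertion should be made explicit, because your (correct) bookkeeping exposes a typo in the paper. Writing $D(p)$ for the denominator $\prod_{\de=+,-}\Gamma(i\de(p_1-p_2)/2)\Gamma(i\de(p_1-p_2)/2+g)$ in \eqref{intEqnr2}, as you do, one finds from \eqref{rF0alt} that $1/D(p)=4\pi\Gamma(g)^2\hat{w}_0(g;(p_1-p_2)/2)$ and $F_2(g;r,p)=\rF_{0,2}(g;r,p)/[4\sqrt{\pi}\,\Gamma(g)^2 w_0(g;r_1-r_2)^{1/2}\hat{w}_0(g;(p_1-p_2)/2)^{1/2}]$, so the identity that emerges from \eqref{intEqnr2} after symmetrization is
\begin{multline}
\int_{G_2}dp\,\hat{w}_0(g;(k_1-k_2)/2)^{1/2}\,\hat{w}_0(g;(p_1-p_2)/2)^{1/2}\,\rF_{0,2}(g;r,p)\prod_{j,l=1,2}\prod_{\de=+,-}\Gamma((i\de (k_j-p_l)+g)/2)\\
=\frac{4\pi}{[4\cosh (r_1) \cosh (r_2)]^g}\,\rF_{0,2}(g;r,k),
\end{multline}
i.e., with the spectral weights at the \emph{half}-differences, exactly as you insist. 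However, the paper's definition of $\hat{\cJ}_2(g)$ (and likewise \eqref{idennr2} and the kernel identity displayed after the theorem) carries $\hat{w}_0(g;k_1-k_2)^{1/2}\hat{w}_0(g;p_1-p_2)^{1/2}$ at the full differences. These are genuinely different operators, and it is the half-difference one that $\cF_{0,2}(g)$ diagonalizes with eigenvalue $4\pi/[4\cosh(r_1)\cosh(r_2)]^g$; with the printed definition the second assertion is false as stated, so no bookkeeping could make the Gamma-products ``exactly absorbed'' by the printed weights. Your proof therefore establishes the theorem for the (clearly intended) corrected operator, and the same correction is forced by the authors' own consistency check: rederiving \eqref{idnr2} from \eqref{idennr2} requires the arguments $(k_1-k_2)/2$ and $(q_1-q_2)/2$ there as well. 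The first assertion is immune to this issue because the position variables are not rescaled in the nonrelativistic limit, so $w_0$ appears at the full differences $r_1-r_2$, $z_1-z_2$, matching the definition of $\cJ_2(g)$.
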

\begin{proof}
Recalling \eqref{F2} and \eqref{rF0}, this can be read off from \eqref{intEqnr1} and \eqref{intEqnr2}.
\end{proof}

We conclude this section by deriving nonrelativistic counterparts of the identities~\eqref{id1} and~\eqref{iden1}. First, we obtain alternative versions of the product formulas in Theorems~4.2 and~4.3, using the unitarity of the transform~$\cF_0(g)$. To this end we multiply~\eqref{rF0id1} by~$\rF_0(g;v,k)$ and integrate over~$k$. Now we use~\eqref{rF0}, rewritten as
\be\label{rF0alt}
F(g;r,2k)=\rF_0(g;r,k)/4\sqrt{\pi}\Gamma(g)^2w_0(g;r)^{1/2}\hat{w}_0(g;k)^{1/2}.
\ee
As a result, we obtain the identity
\begin{multline}\label{idnr1}
\frac{w_0(g;r)^{1/2}w_0(g;s)^{1/2}w_0(g;t)^{1/2}}{\prod_{\de_1,\de_2=+,-}[2\cosh((r+\de_1 s+\de_2 t)/2)]^g}=
\\
\frac{1}{4\pi^{3/2}\Gamma(g)^2}\int_0^{\infty} dk\, \hat{w}_0(g;k)^{-1/2}\rF_0(g;r,k)\rF_0(g;s,k)\rF_0(g;t,k).
\end{multline}
Likewise, we multiply~\eqref{rF0id2} by~$\rF_0(g;r,v)$ and integrate over~$r$. Using again~\eqref{rF0alt}, we arrive at
\begin{multline}\label{idnr2}
\hat{w}_0(g;k)^{1/2}\hat{w}_0(g;p)^{1/2}\hat{w}_0(g;q)^{1/2}\prod_{\de_1,\de_2,\de_3=+,-}\Gamma((g+i\de_1 k+i\de_2 p+i\de_3 q)/2) =
\\
\frac{1}{4\pi^{3/2}\Gamma(g)^2}\int_0^{\infty} dr\, w_0(g;r)^{-1/2}\rF_0(g;r,k)\rF_0(g;r,p)\rF_0(g;r,q).\end{multline}

Turning to the 2-particle case, we  first use \eqref{rF0alt} to rewrite \eqref{intEqnr1} as
\be
\int_{G^2} dz  \frac{w_0(g;r_1-r_2)^{1/2}w_0(g;z_1-z_2)^{1/2} }{\prod_{j,l=1,2}[2\cosh(r_j-z_l)]^g} \rF_{0,2}(g;z,k)
=\mu_0(g;k)  \rF_{0,2}(g;r,k).
\ee
Now we multiply this by~$\overline{\rF_{0,2}}(g;s,k)/4\pi^2$ and integrate~$k $ over~$G_2$. This yields the identity
\be\label{idennr1}
\frac{w_0(g;r_1-r_2)^{1/2}w_0(g;s_1-s_2)^{1/2}}{\prod_{j,l=1,2}[2\cosh(r_j-s_l)]^g}=\frac{1}{4\pi^2}
\int_{G_2}dk\, \mu_0(g;k)\rF_{0,2}(g;r,k)\overline{\rF_{0,2}}(g;s,k).
\ee

Similarly, using \eqref{rF0alt} to rewrite \eqref{intEqnr2} as
\begin{multline}  
 \int_{G^2} dp \, \hat{w}_0(g;k_1-k_2)^{1/2}\hat{w}_0(g;p_1-p_2)^{1/2}\rF_{0,2}(g;r,p)
 \prod_{j,l=1,2}\prod_{\de=+,-}\Gamma((i\de (k_j-p_l)+g)/2) 
 \\
= \frac{4\pi}{[4\cosh (r_1) \cosh (r_2)]^g}\rF_{0,2}(g;r,k),
\end{multline}
multiplying this by~$\overline{\rF_{0,2}}(g;r,q)/4\pi^2$ and then integrating~$r $ over~$G_2$, we get
\begin{multline}\label{idennr2}
\hat{w}_0(g;k_1-k_2)^{1/2}\hat{w}_0(g;q_1-q_2)^{1/2}
\prod_{j,l=1,2}\prod_{\de=+,-}\Gamma((i\de (k_j-q_l)+g)/2) 
\\
=\frac{1}{\pi}\int_{G_2}dr\, \frac{\rF_{0,2}(g;r,k)\overline{\rF_{0,2}}(g;r,q)}{[4\cosh (r_1) \cosh (r_2)]^g}.
\end{multline}

Finally, just as the 2-particle identity~\eqref{iden1} leads to the reduced identity~\eqref{id1} upon using sum and difference variables, we can rederive \eqref{idnr1}/\eqref{idnr2} from \eqref{idennr1}/\eqref{idennr2} by invoking the $F$-representations \eqref{Frep2}/\eqref{JFlim} and then using \eqref{rF0alt}, respectively. We note that this yields a nontrivial check on the various constants involved.

\begin{appendix}

\section{The hyperbolic gamma function}\label{AppA}
In this appendix we review previously known properties of the hyperbolic gamma function $G(a_+,a_-;z)$ we have occasion to use in Sections 2 and~3. (More details can be found in~\cite{R97} and Appendix~A of \cite{R99}.) Throughout the paper we choose the parameters~$a_+$ and~$a_-$ positive. As a rule, the dependence on these parameters  shall be suppressed when they are supposed to be fixed.  

The hyperbolic gamma function was introduced in Section III~A of \cite{R97} as the unique minimal solution of one of the two A$\De$Es
\be\label{GADeEs}
\frac{G(z+ia_\de/2)}{G(z-ia_\de/2)}=2c_{-\de}(z),\ \ \ \de=+,-,
\ee
that has modulus $1$ for real $z$ and satisfies $G(0)=1$. It is not obvious, but true, that the other one is satisfied as well. Furthermore, $G(z)$ is meromorphic in~$z$, and it has neither poles nor zeros for $z$ in the strip  
\be\label{strip}
S\equiv \{z\in\C \mid |\im (z)|<a\},\ \ \ a=(a_++a_-)/2.
\ee
Hence we have
\be\label{Gg}
G(z)=\exp(ig(z)),\ \ \ \ z\in S,
\ee
with $g(z)$ holomorphic in $S$. Explicitly, $g(z)$ has the integral representation
\be\label{ghyp}
g(a_{+},a_{-};z) =\int_0^\infty\frac{dy}{y}\left(\frac{\sin 2yz}{2\sinh(a_{+}y)\sinh(a_{-}y)} - \frac{z}{a_{+}a_{-} y}\right),\ \ \ \ z\in S.
\ee
This clearly implies that the hyperbolic gamma function has the following properties:
\be\label{Grefl}	
G(-z) = 1/G(z),\ \ \ ({\rm reflection\ equation}),
\ee
\be\label{modinv}
G(a_-,a_+;z) = G(a_+,a_-;z),\ \ \  ({\rm modular\ invariance}),
\ee
\be\label{sc}
G(\lambda a_+,\lambda a_-;\lambda z) = G(a_+,a_-;z),\quad \lambda\in(0,\infty),\ \ \ ({\rm scale\ invariance}),
\ee
\be\label{Gconj}
\overline{G(a_+,a_-;z)}=G(a_+,a_-;-\overline{z}).
\ee

From Appendix A in \cite{R99} we recall that $G(z)$ can be written as 
\be\label{GE}
G(z)=E(z)/E(-z),
\ee
with $E(z)$ an entire function with zeros located only at the points
\be\label{Ezs}
z=ia+ip_{kl},\ \ \ a=(a_+ +a_-)/2,\ \ \  k,l\in\N\equiv \{ 0,1,2,\ldots\}, 
\ee
where
\be\label{pkl}
p_{kl}\equiv ka_++la_-.
\ee
Moreover, the order of these zeros  (denoted by $\cO(kl)$), equals the number of distinct pairs $(m,n)\in\N^2$ such that $p_{mn}=p_{kl}$. In particular, for $a_+/a_-\notin\Q$ all zeros are simple. Clearly, \eqref{GE} entails that $G$ has the same zero set as $E$ and poles of order $\cO(kl)$ located solely at
\be\label{Gpo}
z=-ia-ip_{kl},\ \ \ k,l\in\N.
\ee

We also recall that the asymptotic behaviour of $G(z)$ for $\re z\to\pm\infty$ is given by
\be\label{Gas}
G(z)=\exp(\mp i(\chi+\alpha z^2/4))(1+O(\exp(-r|\re z|))),\ \ \ \re z\to\pm\infty,
\ee
where
\be
\chi\equiv \frac{\pi}{24}\left(\frac{a_+}{a_-}+\frac{a_-}{a_+}\right),
\ee
the decay rate can be any positive number satisfying
\be\label{r}
r<\alpha\min(a_+,a_-),
\ee
and the implied constant is uniform for $\im z$ varying over compact subsets of $\R$.

Finally, in Section \ref{Sec2} we have occasion to make use of the Fourier transform formula from Proposition C.1 in \cite{R11}. More specifically, let $\mu,\nu\in\C$ be such that
\be
-a<\im\mu<\im\nu<a,
\ee
and assume that $y\in\C$ satisfies
\be
|\im y|<\im(\nu-\mu)/2.
\ee
Then the pertinent formula is given by
\begin{multline}\label{Fform}
\int_\R dz\exp(i\alpha zy)\frac{G(z-\nu)}{G(z-\mu)}\\
=\sqrt{a_+a_-}\exp(i\alpha y(\mu+\nu)/2)G(ia+\mu-\nu)\prod_{\de=+,-}G(\de y-ia+(\nu-\mu)/2).
\end{multline}

\section{Uniform bounds on $G$-function ratios}\label{AppB}

In this appendix and the next one, we reconsider and improve limits involving the hyperbolic gamma function. We shall use the results to control the nonrelativistic limit in Section~4.

We begin by recalling from Subsection~III~A in~\cite{R97} the limit
\be\label{GGlim}
\lim_{\beta\downarrow 0}\frac{G(\pi, \beta;z+i\beta u)}{G(\pi, \beta;z+i\beta d)}
=\exp((u-d)\ln (2\cosh z)). 
\ee
Here, we have $u,d\in\R$, the logarithm is real-valued for~$z$ real, and~\eqref{GGlim} holds true uniformly  for~$z$ varying over compact subsets of the cut plane
\be\label{Ccut}
\C(\pi)\equiv \C \setminus \{ \pm i[\pi/2,\infty)\},
\ee
cf.~{\it loc.~cit.}~(3.91). 
Now the definition~\eqref{w} of the $w$-function entails
\be
w(\pi,\beta,\beta g;z)=\frac{G(\pi,\beta;z-i\pi /2+i\beta(g-1/2))}{G(\pi,\beta;z-i\pi
/2+i\beta(-1/2))}\cdot \frac{G(\pi ,\beta;z+i\pi /2+i\beta(1/2))}{G(\pi,\beta;z+i\pi /2
+i\beta(1/2-g))},
\ee
so from~\eqref{GGlim} we deduce
\be\label{wlim}
\lim_{\beta \downarrow 0}w(\pi,\beta ,\beta  g; z)=\exp (2g\ln
(2\sinh  z)), 
\ee
where~$g$ is real, the limit is uniform on compacts of the open right half plane and the logarithm is real-valued for $z>0$.
 
In this appendix we obtain bounds on the $G$-ratios occurring in~\eqref{GGlim}/\eqref{wlim} that are uniform for $\beta$ sufficiently small and  for $\re z\in\R$/$z\in(0,\infty)$, so that we can appeal to the dominated convergence theorem for $(\re z)$-integrals  that involve ratios of the above type. More specifically, we aim for uniform bounds that involve the limit function. To see what this entails for~\eqref{GGlim}, it is helpful to inspect a simple special case. Specifically, let us take $u=v-1/2$ and $d=v+1/2$ in the pertinent $G$-ratio. Then by~\eqref{GADeEs} it equals $1/2\cosh (z+i\beta v)$. Thus for a bound  
\be
\frac{1}{|\cosh (z+i\beta v)|}<\frac{C}{ |\cosh z|},
\ee
to hold with $C>0$ independent of~$\re z\in\R$ and~$\beta$ in a sufficiently small interval~$(0,\beta_0]$, we need to require $\im z\in(-\pi/2,\pi/2)$ and then choose~$\beta_0>0$ such that $|\im z|+\beta_0 |\im v|<\pi/2$. 

This example goes to show that the following proposition cannot be much improved.

\begin{proposition}
Let~$u,d\in\cS_R$, where
\be\label{defcS}
\cS_R\equiv \{ v\in \C\mid |\re v|\le R\},\ \ \ R\ge 1, 
\ee
 and let~$z\in\C$ satisfy $|\im z|\le \rho$, $\rho\in[0,\pi/2)$.
 Choosing~$\beta_0>0$ such that
\be\label{beta0}
\beta_0 R\le (\pi-2\rho)/4,
\ee
we have for all~$\beta\in (0,\beta_0]$ a bound 
\be\label{GGb}
 \left|\frac{G(\pi, \beta;z+i\beta u)}{G(\pi, \beta;z+i\beta d)}\right|\le C(u,d,\rho) |\exp((u-d)\ln (2\cosh z))|,
 \ee
 where $C$ is a positive continuous function on~$\cS_R^2\times [0,\pi/2)$ and the logarithm is real for~$z$ real.
\end{proposition}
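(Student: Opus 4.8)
The plan is to place both shifted arguments inside the strip $S$, where $G$ has neither zeros nor poles, and to split off the $\re z$-growth by an elementary telescoping before invoking the integral representation of $g$. First I would record that the constraint $\beta_0 R\le(\pi-2\rho)/4$ forces
\[
|\im(z+i\beta u)|,\ |\im(z+i\beta d)|\le\rho+\beta_0 R\le \tfrac{\rho}{2}+\tfrac{\pi}{4}<\tfrac{\pi}{2}<a,
\]
so that by \eqref{Gg} the ratio equals $\exp\big(i[g(z+i\beta u)-g(z+i\beta d)]\big)$ and its modulus equals $\exp\big(-\im[g(z+i\beta u)-g(z+i\beta d)]\big)$. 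The asserted bound is thus equivalent to an upper estimate, uniform in $\re z\in\R$ and $\beta\in(0,\beta_0]$, for $-\im[g(z+i\beta u)-g(z+i\beta d)]-\re[(u-d)\ln 2\cosh z]$.

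The difficulty lies entirely in the uniformity as $|\re z|\to\infty$: a crude modulus bound on the integral representation of the $g$-difference degrades there, since the limit function itself grows like $\exp(\re(u-d)|\re z|)$. To isolate this growth I would use \eqref{GADeEs} with $a_-=\beta$, namely $G(w+i\beta/2)/G(w-i\beta/2)=2\cosh w$. Writing $n$ for the nearest integer to $\re(u-d)$ and $d'=d+n$, this telescopes the integer part into an explicit finite product,
\[
\frac{G(z+i\beta d')}{G(z+i\beta d)}=\prod_{j=0}^{n-1}2\cosh\big(z+i\beta(d+j+\tfrac12)\big)
\]
(with the obvious modification for $n\le 0$), leaving a remainder ratio $G(z+i\beta u)/G(z+i\beta d')$ with $|\re(u-d')|\le 1/2$. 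Each telescoping factor is compared to $2\cosh z$: since $|\im z|\le\rho<\pi/2$ gives $|2\cosh z|\ge 2\cos\rho>0$ while the numerators are $O(e^{|\re z|})$, the quotient $|2\cosh(z+i\beta(d+j+1/2))|/|2\cosh z|$ is bounded uniformly in $\re z$ and $\beta$, so the product of $n$ such quotients contributes only a constant depending on $u,d,\rho$. This step converts the $(2\cosh z)^{n}$-growth into a harmless factor.

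For the remainder I would feed in the integral representation \eqref{ghyp} of $g$, combine the two $g$-values by a sum-to-product identity, and subtract the classical evaluation $\int_0^\infty\frac{dy}{y}\big(\frac{\cos 2yw}{\sinh\pi y}-\frac{1}{\pi y}\big)=-\ln 2\cosh w$ (valid for $|\im w|<\pi/2$ and encoding the limit \eqref{GGlim}). Inserting the Fourier representation of $\sinh(\theta\beta y)/\sinh(\beta y)$, available because $|\re\theta|=|\re(u-d')|<1$, turns the remainder's log-modulus into $\re[\theta\ln 2\cosh z]$ plus an integral of the form $\int_\R\hat K_\theta(\xi)\,\re\ln\tfrac{\cosh(z+\sigma_\pm(\xi))}{\cosh z}\,d\xi$, where the shifts satisfy $|\im(z+\sigma_\pm)|\le\rho+\beta_0 R<\pi/2$ because $\re(u+d')=2\re u$ together with the constraint. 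This error is then controlled by the elementary identity $\tfrac{\cosh(z+\sigma)}{\cosh z}=\cosh\sigma+\sinh\sigma\tanh z$ with $\tanh z$ bounded on $|\im z|\le\rho$; hence $\big|\ln\tfrac{\cosh(z+\sigma)}{\cosh z}\big|$ grows at most linearly in $|\re\sigma|\sim\beta|\xi|$, which is integrable against the exponentially decaying kernel $\hat K_\theta$. Combining with the telescoping produces the main term $\re[(n+\theta)\ln 2\cosh z]=\re[(u-d)\ln 2\cosh z]$.

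The main obstacle is precisely this simultaneous uniformity in $\re z\to\pm\infty$ and in $\beta\downarrow 0$: the naive modulus estimate is too lossy near $|\re z|=\infty$, and one must exploit the exact cancellation between the ratio's growth and that of the limit function. The two mechanisms above—the explicit $\cosh$-telescoping for the integer part and the boundedness of $\ln(\cosh(z+\sigma)/\cosh z)$ for the fractional part—both rest on $2\cosh z$ being bounded below and $\tanh z$ being bounded on $|\im z|\le\rho<\pi/2$, and the constraint $\beta_0 R\le(\pi-2\rho)/4$ is exactly what keeps every shifted argument strictly inside the strip. Finally, continuity of $C(u,d,\rho)$ follows by tracking the finitely many, locally uniform constants produced at each step, which completes the proof.
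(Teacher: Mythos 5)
Your argument is correct, and its first half coincides with the paper's: the paper likewise uses the A$\De$E \eqref{GADeEs} to strip off integer shifts as explicit products of factors $2\cosh(z+i\beta(\cdot))$, each compared against $2\cosh z$, thereby reducing to real parts of modulus at most $1/2$ (it reduces $u$ and $d$ separately to $\cS_{1/2}$, whereas you shift only $d$ by the nearest integer to $\re(u-d)$; the difference is immaterial). Where you genuinely depart from the paper is the core estimate for the reduced ratio. The paper stays with the integral representation: it subtracts $(u-d)$ times the identity $\ln(2\cosh z)=\int_0^\infty\frac{dy}{y}\bigl(\frac{1}{\pi y}-\frac{\cos 2yz}{\sinh\pi y}\bigr)$, splits the remaining $y$-integral at $y=1$, and estimates elementarily, using $\bigl|\frac{\sinh((d-u)x)}{\sinh x}-(d-u)\bigr|<c(d-u)$ and $|\cos(2yz+i\beta(d+u)y)|\le e^{y(\pi/2+\rho)}$ on $(1,\infty)$, and a Taylor expansion of the integrand on $(0,1)$. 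You instead insert the Fourier pair for $\sinh(\theta t)/\sinh t$ (legitimate since $|\re\theta|\le 1/2$) and evaluate the $y$-integral in closed form, representing the log of the reduced ratio as an average of $\ln 2\cosh(z+\sigma_\pm(\xi))$ against an exponentially decaying kernel of total mass $\theta$; the deviation from $\theta\ln 2\cosh z$ is then controlled by $\cosh(z+\sigma)/\cosh z=\cosh\sigma+\sinh\sigma\tanh z$ and boundedness of $\tanh$ on substrips of $|\im z|<\pi/2$. Your route buys an exact smearing identity from which the limit \eqref{GGlim} and the uniform bound follow simultaneously; the paper's route is more elementary, requiring no Fourier inversion and no Fubini interchange (in your version the $\theta/\pi y$ counterterm must be kept paired with the kernel mass before interchanging, a point worth spelling out). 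Two small repairs: $\re(u+d')=2\re u$ only up to an error $\le 1/2$, which is harmless since \eqref{beta0} and $R\ge 1$ then give $\beta_0|\re(u+d')|/2\le\frac{5}{16}(\pi-2\rho)$, hence $|\im(z+\sigma_\pm)|\le(6\rho+5\pi)/16<\pi/2$ with a $\rho$-continuous margin; and the lower bound $|2\cosh z|\ge 2\cos\rho$ alone does not make the telescoping quotients bounded as $|\re z|\to\infty$ — you need $|\cosh z|\ge c(\rho)\cosh(\re z)$, or simply the same $\cosh$-addition identity you invoke for the fractional part, which (as your closing paragraph indicates) settles it.
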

\begin{proof}
We begin by pointing out that
  we have
\be\label{udres}
\im (z+i\beta u), \im (z+i\beta d)\in (-\pi/2,\pi/2),
\ee
by virtue of the $\beta$-restriction. Next, we show that we can use~\eqref{GADeEs} to reduce the case where $|\re d|$ and/or $|\re u|$ is larger than 1/2 to the case $u,d\in\cS_{1/2}$. 

Indeed, consider first the assumption $\re d\in (N-1/2,N+1/2]$ for an integer $N\ge 1$. 
Then we can write
\be
 G(\pi,\beta;z+i\beta d)=G(\pi,\beta;z+i\beta (d-N))2^N\prod_{k=0}^{N-1}\cosh(z+i\beta(d-k-1/2)),
 \ee
 so that $d-N\in\cS_{1/2}$. 
Now in view of the restrictions~\eqref{udres} and $|\im z|\le \rho$, we can use the bound
\be
1/\left| \prod_{k=0}^{N-1}\cosh(z+i\beta(d-k-1/2))\right|\le C_+ |\cosh(z)|^{-N},
\ee
with $C_+>0$ depending on~$N$, $\im d$ and~$\rho$, but not on~$\beta$ and~$\re z$, so as to reduce consideration to~$\cS_{1/2}$.

Likewise, assuming $\re d\in [-N-1/2,-N+1/2)$, we can use 
\be
\left| \prod_{k=0}^{N-1}\cosh(z+i\beta(d+k+1/2))\right|\le C_- |\cosh(z)|^N,
\ee
with $C_-$ independent of~$\beta$ and~$\re z$ to replace the denominator $G(\pi,\beta;z+i\beta d)$ by $G(\pi,\beta;z+i\beta (d+N))$, with $d+N\in\cS_{1/2}$. Of course, the numerator~$G(\pi,\beta;z+i\beta u)$ can be treated analogously in case $|\re u|>1/2$.

Accordingly, we assume from now on
\be\label{uds}
u,v\in\cS_{1/2}.
\ee 
We first use the integral representation resulting from~\eqref{Gg} and~\eqref{ghyp} to write
\be
\frac{G(\pi, \beta;z+i\beta u)}{G(\pi, \beta;z+i\beta d)}=\exp   \int_0^\infty\frac{dy}{y}\left(\frac{\sinh (\beta (d-u)y)\cos(2yz+i\beta (d+u)y)}{\sinh(\pi y)\sinh(\beta y)} + \frac{u-d}{\pi y}\right).
\ee
Using the identity (cf.~(3.21) in~\cite{R97})
\be
\ln (2 \cosh z)= \int_0^\infty \frac{dy}{y}\left(\frac{1}{\pi y}-\frac{\cos 2yz}{\sinh \pi y}\right),\ \ \ |\im z|<\pi /2,
\ee
we now deduce
\be
\frac{G(\pi, \beta;z+i\beta u)}{G(\pi, \beta;z+i\beta d)}
\exp ((d-u)\ln (2 \cosh z))=\exp \int_0^\infty\frac{dy}{y\sinh \pi y} I(y),
\ee
where we have set
\be
I(y)\equiv
 \frac{\sinh \beta (d-u)y}{\sinh\beta y} \cos(2yz+i\beta (d+u)y) +(u-d) \cos 2yz.
\ee
As a consequence, the proposition follows when we can prove a bound
\be\label{intb}
\left| \int_0^\infty\frac{dy}{y\sinh \pi y} I(y)\right|\le c(u,d,\rho),
\ee
with $c$ continuous on~$\cS_{1/2}^2\times [0,\pi/2)$.

To this end, we write the integral as a sum of integrals
\be
I_1\equiv \int_1^\infty\frac{dy}{y\sinh \pi y} I(y),\ \ 
I_2\equiv \int_0^1\frac{dy}{y\sinh \pi y} I(y).
\ee
 In the first integral we write
\bea
I(y) & = & \left( \frac{\sinh \beta (d-u)y}{\sinh\beta y} -(d-u)\right)\cos(2yz+i\beta (d+u)y)
\\  \nonumber
&  &  +(d-u)(\cos(2yz+i\beta (d+u)y)- \cos 2yz).
\eea
Now from our assumption~\eqref{uds} we deduce a bound
\be
\left|\frac{\sinh (d-u)x}{\sinh x}-(d-u)\right|<c(d-u),\ \ \ \forall x>0,
\ee
with $c(v)$ continuous on $\cS_1$; also, using~\eqref{beta0} we obtain
\bea
| \cos(2yz+i\beta (d+u)y)|  &  \le & \exp( y(2|\im z|+\beta|\re (d+u)|))
\\  \nonumber
  &  \le & \exp (y(\pi/2+\rho)).
  \eea
Hence we get
\bea\label{I1b}
|I_1|  &  \le  & \int_1^\infty\frac{dy}{y\sinh \pi y}\big( [c(d-u)+|d-u|]\exp (y(\pi/2+\rho)) 
\\  \nonumber
  &  & +|d-u|\exp (2y|\im z|)\big)
\le   c_1(u,d, \rho),
  \eea
where $c_1$ is continuous on $\cS_{1/2}^2\times [0,\pi/2)$.
 
In the second integral~$I_2$ we use
\be
\frac{\sinh \beta (d-u)y}{\sinh \beta y}=(d-u)\big(1+ O(\beta^2y^2)\big),
\ee
and
\be
\cos (2yz+i\beta (d+u)y)=\big(\cos 2yz  -i\beta (d+u)y\sin 2yz\big)\big(1+ O(\beta^2y^2)\big),
\ee
so that we wind up with
\be\label{ints}
I(y)=-i\beta (d^2-u^2)y\sin 2yz +O(\beta^2 y^2),\ \ \ \beta y\to 0.
\ee
Here, the implied constant can be chosen uniformly for $u$ and $d$ varying
over $\C$-compacts and $\im z$ varying over bounded intervals.
  From this  we readily deduce
\be\label{I2b}
|I_2|\le c_2(u,d,\im z),
\ee
with $c_2$ continuous on $\C^2\times\R$. 

Combining the estimates~\eqref{I1b} and~\eqref{I2b} yields~\eqref{intb}, so the proposition follows.
\end{proof}

Consider next the weight function. Again, it is illuminating to consider a special case: Letting~$N\in\N$ with $N>1$, it follows from~\eqref{GADeEs} that we have an explicit evaluation
\be\label{wNev}
w(\pi,\beta,N\beta;z)=2^{2N}\sinh (z)^2\prod_{k=1}^{N-1}\sinh(z+ik\beta)\sinh(z-ik\beta).
\ee
 Hence we  should not aim for a bound $C\sinh(z)^{2N}$ with $C$ uniform for $z>0$ and $\beta$ sufficiently small, since such a bound is not valid near $z=0$. On the other hand, this snag is not present for a bound of the form $C\sinh (Nz)^2$, and the $\beta\to 0$ limit is majorized by the latter function for $C=2^{2N}$. Indeed, a more general inequality holds true:
 \be\label{shest}
 \sinh(z)^a\le \sinh (az),\ \ \ z>0,\ \ \  a\ge 1.
 \ee
(Its proof is straightforward, cf.~(101)--(102) in~\cite{HR15}.) 

For $a<1$ it is manifestly false that the function~$\sinh(z)^a$ is majorized by~$C\sinh (az)$ for all~$z>0$. Therefore the $g$-restriction in the following proposition cannot be relaxed.

\begin{proposition}
Letting $\beta\in (0,1]$, we have an inequality
\be\label{west}
w(\pi,\beta,g\beta;z)< C(g)\sinh (gz)^2,
\ee
where $g\ge 1$, $z>0$ and $C$ is a positive continuous function on $[1,\infty)$.
\end{proposition}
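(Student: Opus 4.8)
The plan is to bound the weight function $w(\pi,\beta,g\beta;z)$ by writing it in the form that exhibits its convergence to the limit function, and then controlling the finite-$\beta$ corrections uniformly. Following the integral representation strategy of Proposition~B.1, I would begin from the definition~\eqref{w} and the expression
\be
w(\pi,\beta,\beta g;z)=\frac{G(\pi,\beta;z-i\pi /2+i\beta(g-1/2))}{G(\pi,\beta;z-i\pi
/2+i\beta(-1/2))}\cdot \frac{G(\pi ,\beta;z+i\pi /2+i\beta(1/2))}{G(\pi,\beta;z+i\pi /2
+i\beta(1/2-g))},
\ee
already recorded in the excerpt. The idea is that each of the two $G$-ratios is governed by the limit~\eqref{GGlim} producing a factor $\exp(g\ln(2\sinh z))=(2\sinh z)^g$, whose product is the target $(2\sinh z)^{2g}=w_0(g;z)$ appearing as the limit in~\eqref{wlim}.

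First I would reduce to the generic case by the telescoping identity~\eqref{GADeEs}: writing $g=N+g'$ with $N\in\N$ and $g'\in[0,1)$ (or treating the integer part via~\eqref{wNev}), one peels off hyperbolic cosine/sine factors until the remaining coupling lies in a bounded range, so that the implied constants in the $G$-ratio estimates stay under control. The essential analytic input is then to majorize each $G$-ratio by its limit times a $\beta$-independent constant, exactly as Proposition~B.1 does, but now with the arguments shifted by $\pm i\pi/2$ off the real axis. Since $z>0$ is real here, the shifted points $z\pm i\pi/2+i\beta(\cdots)$ have imaginary parts close to $\pm\pi/2$; for $\beta\in(0,1]$ small enough these stay inside the allowed strip $|\im|<\pi/2$ up to the boundary issue at $z=0$, which is precisely where the crude power bound $(2\sinh z)^{2g}$ would blow up relative to $\sinh(z)^{2g}$.

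The key step that does the real work is the elementary inequality~\eqref{shest}, namely $\sinh(z)^a\le\sinh(az)$ for $z>0$ and $a\ge1$. This is exactly what converts a naive bound of the shape $C(2\sinh z)^{2g}$—which fails near $z=0$ because $\sinh(z)^{2g}$ with $2g\ge 2$ decays faster than the proposed majorant $\sinh(gz)^2$ would allow in the wrong direction—into the claimed bound $C(g)\sinh(gz)^2$. Concretely, after establishing a uniform estimate of the form $w(\pi,\beta,\beta g;z)\le C(g)(2\sinh z)^{2g}$ for $z$ away from $0$ and a separate small-$z$ analysis, I would invoke~\eqref{shest} with $a=g\ge1$ to write $\sinh(z)^{2g}=(\sinh(z)^g)^2\le\sinh(gz)^2$, absorbing all constants into $C(g)$. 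The hypothesis $g\ge1$ is used exactly here and, as the text emphasizes via the counterexample for $a<1$, cannot be dropped.

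The main obstacle I expect is the behaviour near $z=0$: there the individual $G$-ratios are not uniformly close to their limits in a naive multiplicative sense, and one must verify that the product $w$ is nonetheless bounded by $C(g)\sinh(gz)^2$ rather than by the larger $C(g)\sinh(z)^{2g}$. The cleanest route is to prove the uniform multiplicative bound $w(\pi,\beta,\beta g;z)\le C(g)\sinh(z)^{2g}$ valid for all $z>0$ via the integral-representation argument of Proposition~B.1 (which is already uniform in $\re z$ and in $\beta$ small), and only then apply~\eqref{shest} to replace $\sinh(z)^{2g}$ by $\sinh(gz)^2$. In this way the delicate $z\to0$ regime is handled by the single sharp inequality~\eqref{shest} rather than by case analysis, and the continuity of $C$ in $g$ on $[1,\infty)$ follows from the continuous dependence of the constants produced by the telescoping reduction and the integral estimates.
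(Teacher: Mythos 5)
There is a genuine gap in your proposal, and it is precisely the one the paper warns about in the paragraph preceding the proposition. Your ``cleanest route'' is to first establish the uniform bound $w(\pi,\beta,\beta g;z)\le C(g)\sinh(z)^{2g}$ for all $z>0$ and all $\beta$ small, and only then apply \eqref{shest}. But that intermediate bound is false near $z=0$: by the explicit evaluation \eqref{wNev}, using $\sinh(z+ik\beta)\sinh(z-ik\beta)=\sinh^2(z)+\sin^2(k\beta)$, one finds for integer coupling $g=N\ge 2$ and $z\ll\beta$ that $w(\pi,\beta,N\beta;z)$ is of order $z^2\beta^{2(N-1)}$, whereas $\sinh(z)^{2N}$ is of order $z^{2N}$; the required constant would have to exceed $(\beta/z)^{2(N-1)}$ up to numerical factors, which diverges as $z\to0$ at fixed $\beta$. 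This is exactly why the target in \eqref{west} is $\sinh(gz)^2$ (quadratic vanishing at $z=0$, matching the double zero of $w$) rather than $\sinh(z)^{2g}$: near $z=0$ the function $\sinh(z)^{2g}$ is the \emph{smaller} of the two majorants, not the larger one as your text suggests, so no uniform constant can repair the first step, and \eqref{shest} cannot ``handle the delicate $z\to0$ regime'' --- it only weakens a bound that one would already need to hold uniformly near $z=0$. A second, independent obstruction is your appeal to Proposition~B.1 for the two $G$-ratios constituting $w$: their base points are $z\mp i\pi/2$, i.e.\ they sit exactly on the boundary $|{\rm Im}\,|=\pi/2$, whereas the bound \eqref{GGb} is proved only for $|{\rm Im}\, z|\le\rho<\pi/2$, with a constant continuous on $[0,\pi/2)$ whose derivation (the estimate of $I_1$ against $1/\sinh\pi y$, giving an integrand of order $\exp(y(\rho-\pi/2))$) breaks down at $\rho=\pi/2$. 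This is not a technicality: the limit function $2\cosh(z\mp i\pi/2)=\mp2i\sinh z$ vanishes at $z=0$, which is the very source of the non-uniformity, and correspondingly \eqref{GGlim} is uniform only on compacts of the cut plane \eqref{Ccut}, which exclude the branch points $\mp i\pi/2$.

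The paper's actual proof avoids both problems by different means: (i) it reduces to $g\in[1,2]$ via the functional equation \eqref{wf1} (your telescoping step is the analogue of this, and is fine); (ii) it factors out the exact double zero, $w=4\sinh^2(z)\,w_+$ as in \eqref{wplus}; and (iii) it proves the $\beta$-uniform bound $w_+<C\exp(2(g-1)z)$ through the integral representation \eqref{wprepnew}, the hard point being a uniform lower bound on an oscillatory integral obtained from monotonicity of $Q(\beta,g;\cdot)$ and an alternating-sum argument between the zeros of $\cos 2yz$. Since $\sinh(gz)\ge\sinh(z)\cosh((g-1)z)\ge\tfrac12\sinh(z)e^{(g-1)z}$, this yields \eqref{west}. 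The explicit extraction of $\sinh^2(z)$, which captures the vanishing of $w$ at $z=0$ uniformly in $\beta$, is the key idea missing from your outline; some substitute for it (and for step (iii)) would be needed before your approach could be turned into a proof.
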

\begin{proof}
On account of~\eqref{wNev} and~\eqref{shest}, the bound~\eqref{west} holds true for~$g=N$. Our proof for the general case involves a lot more effort. We begin by using a couple of formulas from Section~V~A in~\cite{R97}, where the above weight function was studied in considerable detail. 

The first formula is {\it loc.~cit.}~(5.15), which implies
\be\label{wf1}
w(\pi,\beta,(g+1)\beta;z)=4\sinh(z+ig\beta)\sinh(z-ig\beta)w(\pi,\beta,g\beta;z).
\ee
(To be sure, \eqref{wf1} can be derived directly from~\eqref{GADeEs}.) From this it is easy to see that we need only show~\eqref{west} for $g\in[1,2]$.

Next, we introduce a new weight function $w_+$ by setting
\be\label{wplus}
w(\pi,\beta,g\beta;z)=4\sinh(z)^2  w_+(\pi,\beta,g\beta;z),
\ee
cf. {\it loc.~cit.}~(5.14). Then it suffices to prove the bound
\be\label{wpb}
w_+(\pi,\beta,g\beta;z)<C\exp(2(g-1)z),\ \ \forall (\beta,g,z)\in (0,1]\times [1,2]\times  (0,\infty).   
\ee
To this end we invoke the integral representation
\be\label{wprep}
w_+(\pi,\beta,g\beta;z)=\exp\left(-2\int_0^\infty \frac{dy}{y}\left( Q(\beta,g;y)\cos 2yz +\frac{1-g}{\pi y}
\right)\right),
\ee
where
\be\label{defQ}
Q(\beta,g;y)\equiv \frac{\sinh (g-1)\beta y}{\sinh \beta y}\cdot \frac{\cosh (\pi- g\beta)y}{\sinh \pi y},
\ee
cf. {\it loc.~cit.}~(5.35) and (5.36). (This representation can be derived by using~\eqref{Gg} and~\eqref{ghyp}.) In order to exploit this explicit formula, we need the auxiliary integral
\be
f(z)\equiv \frac{1}{\pi}\int_0^\infty \frac{dy}{y^2}(1-\cos 2yz)=\frac{2z}{\pi}\int_0^\infty \frac{du}{u^2}(1-\cos u).
\ee
Integrating by parts in the integral over~$(\epsilon, R)$ and then taking~$\epsilon\to 0$ and $R\to\infty$, we arrive at $\int_0^\infty du \sin u/u=\pi/2$, so that $f(z)=z$. We can therefore rewrite~\eqref{wprep} as
\be\label{wprepnew}
w_+(\pi,\beta,g\beta;z)=\exp(2(g-1)z)\exp\left(-2\int_0^\infty \frac{dy}{y}\left( Q(\beta,g;y)+\frac{1-g}{\pi y}
\right)\cos 2yz \right).
\ee

Comparing~\eqref{wprepnew} to \eqref{wpb}, we conclude that it remains to prove
\be\label{brem}
\int_0^\infty \frac{dy}{y}\left( Q(\beta,g;y)+\frac{1-g}{\pi y}
\right)\cos 2yz > c, 
\ee
with $c\in\R$ independent of $(\beta,g,z)\in (0,1]\times [1,2]\times (0,\infty)$. We begin by showing this holds for the integral over $(0,1)$. Then we can use
\be
\left( Q+\frac{1-g}{\pi y}\right)= (g-1)\left( \coth (\pi y)-\frac{1}{\pi y}\right)+O(\beta y),\ \ \ \beta y\to 0,
\ee
where the implied constant can be chosen uniformly for $(\beta,g)\in (0,1] \times [1,2]$. Clearly, this entails
\be
\left| \int_0^1 \frac{dy}{y}\left( Q+\frac{1-g}{\pi y}
\right)\cos 2yz\right|\le c_1,
\ee
with $c_1$ independent of $(\beta,g,z)\in(0,1]\times [1,2]\times (0,\infty)$.

Obviously uniform boundedness is also true for the integral
\be
\int_1^\infty \frac{dy}{y^2}\cos 2yz,
\ee
contributing to the integration over $(1,\infty)$ in~\eqref{brem}. Hence we are done when we can show
\be\label{bfin}
\int_1^\infty \frac{dy}{y}Q(\beta,g;y) \cos 2yz >c_2,
\ee
with $c_2$ independent of $(\beta,g,z)\in(0,1]\times [1,2]\times (0,\infty)$. This last step of our proof is the most arduous one. The difficulty is that the integral does not converge absolutely for $\beta =0$, since $Q$ then reduces to $\coth(\pi y)$, cf.~\eqref{defQ}. 

Our method to get around this last obstacle hinges on $Q$ being a monotonically decreasing function of $y$. This feature is not obvious at face value, but can be gleaned from the logarithmic derivative
\bea
Q'/Q  &  = & (g-1)\beta\coth (g-1)\beta y -\beta \coth \beta y
\\ \nonumber
 & & +(\pi -g\beta)\tanh (\pi -g\beta)y-\pi \coth \pi y.
 \eea
Indeed, for all $(\beta,g,y)\in(0,1]\times [1,2]\times (0,\infty)$, the difference on the second line is clearly negative, whereas a moment's thought shows the first difference is nonpositive. (This follows in particular from $x\coth x$ being increasing on $(0,\infty)$.)

For a fixed $z>0$, consider now the smallest zero $y_0(z)\in(1,\infty)$ of $\cos 2yz$ for which $\sin (2y_0(z)z)=-1$. We assert that the integral of $y^{-1}Q(\beta,g;y)\cos 2yz $ over $(y_0(z),\infty)$ is positive. To see this, note that by monotonicity of $Q/y$ the integral over an interval between two successive zeros of the function $\cos 2yz$ on which it is positive is larger than that over the next such interval, on which it is negative. From this our assertion follows.

It remains to show that the integral over the interval $(1,y_0(z))$ is uniformly bounded below for $(\beta,g,z)\in(0,1]\times [1,2]\times (0,\infty)$. First we put  
\be
\Lambda \equiv Q(\beta,g;1),
\ee
and note that $\Lambda$ is uniformly bounded above. Next we fix attention on the $z$-values
\be
z\in (0,3\pi/4) \Rightarrow y_0(z)=3\pi/4z.
\ee
Then the interval length is bounded above by $3\pi/4z$ and on this interval the integrand is bounded below by $-\Lambda$. The integral is therefore bounded below by $-3\pi \Lambda/4z$, but with this crude bound we cannot exclude a negative divergence as $z$ goes to~0.

However, as soon as $z<\pi/4$, the function $\cos 2yz$ is positive for $y\in[1,\pi/4z)$ and negative for $y\in (\pi/4z,3\pi/4z)$. Now the latter interval has length $\pi/2z$, but on it the integrand is actually bounded below by $-\Lambda \cdot 4z/\pi$. Hence the integral over this interval is bounded below by $-2
\Lambda$.

The upshot is that the pertinent integral is uniformly bounded below for $z\in (0,3\pi/4)$. Choosing next $z\ge 3\pi/4$, we need only use once more that the interval length on which $\cos 2yz$ is negative equals $\pi/2z$, together with $Q(\beta,g;y)/y<\Lambda$ for $y>1$, to deduce that the contribution from the leftmost negative interval is bounded below by $-\Lambda \cdot \pi/2z\ge -2\Lambda/3$. This completes the proof that the integral is uniformly bounded below, so the proposition follows. 
\end{proof}

\section{Uniform bounds on the $G\to\Gamma$ limit}\label{AppC}

In Subsection~III~A of~\cite{R97} it is shown that the function
\be\label{defH}
H(s;z)\equiv G(1,s;s z+i/2)\exp[ iz\ln (2\pi s)-2^{-1}\ln (2\pi)],\ \ \ s>0,
\ee
converges to $1/\Gamma(iz+1/2)$ as $s\to 0$, uniformly for~$z$ in $\C$-compacts, cf.~(3.72) in~{\it loc.~cit.} This involves the product function
\be\label{defP}
P(s;z)\equiv H(s;z)\Gamma(iz+1/2),
\ee
and its integral representation
\be\label{intP}
P(s;z)=\exp I(s;z),\ \ \ |\im z|<1/2+1/s,
\ee
where
\be\label{defI}
I(s;z)\equiv i\int_0^{\infty}f_s(t)[\sin(2zt)-2z\sinh (t)]dt,
\ee
\be\label{fs}
f_s(t)\equiv \frac{\exp(-t/s)}{2t\sinh (t)\sinh(t/s)},
\ee
cf.~the proof of Prop.~III.6 in~{\it loc.~cit.} Indeed, from this representation it is easy to check 
\be\label{Plim}
\lim_{s\to 0}P(s;z)=1,
\ee
uniformly on $\C$-compacts.

Just as in Appendix~B, we now supplement this limit with bounds that are uniform for~$\re z\in\R$ and~$s$ small enough, with a view to invoke dominated convergence for $G$-integrals occurring in the main text, cf.~Section~4. We begin by estimating~$|P(s;z)|$. 

\begin{proposition}
 For all $z\in\C$ with $|y|=|\im z|\le R\in[1,\infty)$ and $s\in(0,1/R]$, we have
\be\label{Pbo}
|P(s;z)| =\exp(\pi sy|x|)\exp(m_P(s;z)),\ \ \ z=x+iy,
\ee
where 
\be\label{mP}
|m_P(s;z)|<c(R)\ln(1+|x|)+d(R),
\ee
with $c$ and $d$ positive continuous functions on $[1,\infty)$. 
\end{proposition}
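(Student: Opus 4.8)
The plan is to start from the representation $P(s;z)=\exp I(s;z)$ in \eqref{intP}, so that $|P(s;z)|=\exp(\re I(s;z))$, and to make $\re I$ explicit. Since $f_s$ is real (cf.~\eqref{fs}), the prefactor $i$ in \eqref{defI} gives $\re I(s;z)=-\im\int_0^\infty f_s(t)[\sin(2zt)-2z\sinh t]\,dt$, and writing $z=x+iy$ with $\sin(2zt)=\sin(2xt)\cosh(2yt)+i\cos(2xt)\sinh(2yt)$ yields
\be
\re I(s;z)=-\int_0^\infty f_s(t)\,[\cos(2xt)\sinh(2yt)-2y\sinh t]\,dt .
\ee
The counterterm $2y\sinh t$ built into \eqref{defI} is precisely what makes the integrand $O(t^3)$ at the origin, so the integral converges.

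Next I would extract the leading term $\pi sy|x|$. The key simplifications $f_s(t)\sinh t=1/[t(e^{2t/s}-1)]$ and $f_s(t)=1/[t\sinh t\,(e^{2t/s}-1)]$ show that near the origin $f_s(t)\sinh(2yt)=sy/t^2-y/t+(\text{bounded-per-scale})$. I would therefore peel off the model singularity $sy/t^2$, using the elementary evaluation $\int_0^\infty t^{-2}[\cos(at)-1]\,dt=-\tfrac{\pi}{2}|a|$ (integration by parts plus the Dirichlet integral) with $a=2x$ to get $-\int_0^\infty (sy/t^2)[\cos(2xt)-1]\,dt=\pi sy|x|$. This gives $\re I=\pi sy|x|+m_P$ as in \eqref{Pbo}, with $m_P=-\int_0^\infty\Phi(t)\,dt$ and $\Phi(t)=f_s(t)[\cos(2xt)\sinh(2yt)-2y\sinh t]-(sy/t^2)[\cos(2xt)-1]$, and the task reduces to bounding $\int_0^\infty\Phi$.

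For this bound I would split at $t=1$. On $[1,\infty)$ the estimate $e^{2t/s}-1\ge\tfrac12 e^{2Rt}$ (valid since $s\le 1/R$) together with $2|y|\le 2R$ dominates both $f_s(t)\sinh(2yt)$ and $2yf_s(t)\sinh t$ by a multiple of $1/(t\sinh t)$, so $\int_1^\infty|\Phi|$ is bounded by a continuous function of $R$. On $(0,1]$ I would write $\Phi=(G_s-K_s)\cos(2xt)+K_s[\cos(2xt)-1]$ with $G_s=f_s\sinh(2yt)-sy/t^2$ and $K_s=2yf_s\sinh t-sy/t^2$. The first piece is controlled uniformly by the scale-robust bound $f_s(t)t^3=t^2/[\sinh t\,(e^{2t/s}-1)]\le s/2$, since $G_s-K_s=f_s[\sinh(2yt)-2y\sinh t]=O(t^3)f_s$. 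The term $c(R)\ln(1+|x|)$ in \eqref{mP} then arises from the $-y/t$ singularity of $K_s$, via the cosine-integral asymptotics $\int_0^1 t^{-1}[\cos(2xt)-1]\,dt=O(\ln(1+|x|))$.

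The main obstacle is keeping the remaining near-origin estimates uniform in $s\in(0,1/R]$. Because $f_s$ concentrates on the scale $t\sim s$, the nonsingular part of $K_s$ near $0$ is as large as $y/3s$, so a crude absolute-value bound on $\int_0^1|K_s|$ loses a spurious factor $\ln(1/s)$. The point is that this $1/s$-plateau is supported on an interval of length $O(s)$, so once paired with the factor $\cos(2xt)-1$ it contributes only $O(1)$, and the pointwise limit $K_s\to 0$ confirms that nothing worse survives. Turning this heuristic into a genuine $s$-uniform estimate — for instance via the substitution $t=s\tau$ while carefully retaining the $\cos(2xt)-1$ factor, so that the analysis survives the simultaneous regimes $s\to 0$ and $|x|\to\infty$ — is the delicate step; it is what pins down the logarithmic (rather than bounded) growth of $m_P$ and the continuity of $c(R)$ and $d(R)$.
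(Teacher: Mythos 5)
Your completed steps are correct, and they essentially retrace the paper's own proof: you use the same representation $|P(s;z)|=\exp K(s;z)$ with $K(s;z)=\int_0^\infty f_s(t)\,[2y\sinh t-\sinh(2yt)\cos 2xt]\,dt$, extract the leading term $\pi sy|x|$ via the same Dirichlet-type evaluation $\int_0^\infty t^{-2}(1-\cos 2xt)\,dt=\pi|x|$, split at $t=1$, and decompose the near-origin integrand in a way that differs from the paper's only in how the factor $\cos 2xt-1$ is grouped (the paper pairs it with $f_s\sinh(2yt)-sy/t^2$, you pair it with $K_s\equiv 2yf_s\sinh t-sy/t^2$; both identities are valid). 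The tail estimate on $[1,\infty)$, the bound $f_s(t)t^3\le s/2$ for the piece $f_s[\sinh 2yt-2y\sinh t]\cos 2xt$, and the identification of the $\ln(1+|x|)$ growth with the $1/t$ singularity are all sound. However, the proof stops exactly where the real work lies: you never prove an $s$-uniform pointwise bound on $K_s$ that can be integrated against $\cos 2xt-1$, and your final paragraph explicitly concedes this ("turning this heuristic into a genuine $s$-uniform estimate \dots is the delicate step"). Since inequality \eqref{mP} \emph{is} precisely the assertion $\int_0^1|K_s|\,|\cos 2xt-1|\,dt\le c(R)\ln(1+|x|)+d(R)$ uniformly in $s\in(0,1/R]$, this is a genuine gap, not a detail.

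The paper closes this gap by a mean value theorem argument: its quantity $L_2=sy/t-tf_s(t)\sinh 2yt$ is written as $-sy\,\partial_t\bigl(e^{-t/s}q(t)q(t/s)/q(2yt)\bigr)\big|_{t=t'}$ with $q(v)=v/\sinh v$, whence $|L_2|\le c_2(R)$ on $(0,1)$ uniformly in $s$, and then $\bigl|\int_0^1 L_2(\cos 2xt-1)t^{-1}dt\bigr|\le 2c_2(R)\int_0^{|x|}\sin^2u\,\frac{du}{u}=O(\ln(1+|x|))$. In fact your own decomposition admits an even shorter resolution, which you missed and which makes the plateau discussion unnecessary: since $f_s(t)\sinh t=1/[t(e^{2t/s}-1)]$, one has
\be
K_s=\frac{2y}{t}\,g(2t/s),\qquad g(u)\equiv\frac{1}{e^u-1}-\frac{1}{u},
\ee
and $g$ increases from $-\tfrac12$ at $u=0^+$ to $0$ at $u=\infty$ (the monotonicity is equivalent to $(e^u-1)^2\ge u^2e^u$, i.e.\ $2\sinh(u/2)\ge u$), so $|g|\le\tfrac12$ and hence $|K_s|\le |y|/t\le R/t$ for \emph{all} $t,s>0$. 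Pairing this with $|\cos 2xt-1|$ gives $\int_0^1|K_s|\,|\cos 2xt-1|\,dt\le 2R\int_0^{|x|}\sin^2u\,\frac{du}{u}\le 2R\,(c+\ln(1+|x|))$, which is \eqref{mP}; no rescaling $t=s\tau$ and no separate treatment of the $|y|/3s$ plateau is needed, because the bound $R/t$ already dominates it.
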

\begin{proof}
Letting $|y|\le R$ and $s\in(0,1/R] $, we have
\be\label{PK}
|P(s;z)|=\exp K(s;z),
\ee
where
\be\label{defK}
K(s;z)\equiv \int_0^{\infty}f_s(t)D(z,t)dt,\ \ \ 
D(z,t)\equiv 2y\sinh (t)- \sinh(2yt)\cos(2xt),
\ee
as is easily verified from~\eqref{intP}--\eqref{fs}. Now we write
\be
m_P(s;z)\equiv K(s;z)-\pi sy|x|=K(s;z)+sy\int_0^{\infty}\frac{dt}{t^2}(\cos 2xt -1),
\ee
and telescope the right-hand side as
\be
 \sum_{j=1}^2\int_0^1I_j(s;z,t)dt + \int_1^{\infty}f_s(t)D(z,t)dt +sy\int_1^{\infty}\frac{dt}{t^2}(\cos 2xt -1),
\ee
with
\be
I_1\equiv  f_s(t)L_1(y,t),\ \ \ L_1\equiv 2y \sinh t-\sinh 2 yt,
\ee
\be
I_2\equiv  L_2(s;y,t)(\cos 2xt-1)/t,\ \ \ L_2\equiv sy/t -tf_s(t)\sinh 2yt.
\ee

We proceed to bound the four summands of $m_P(s;z)$. First,  we have
\be
L_1(y,t)=\frac{t^3}{3}(y-4y^3)+O(t^5),\ \ t\to 0,
\ee
whence we deduce
\be\label{b1}
\left|\int_0^1 I_1dt\right|<c_1(R),\ \ \  s\in (0,1/R],\ \ |y|\le R,
\ee
with $c_1$ continuous on $[1,\infty)$.
 To estimate the second summand, we use the mean value theorem to rewrite~$L_2$ as
 \be
L_2=\frac{sy}{t}\left(1-e^{-t/s}\frac{q(t)q(t/s)}{q(2yt)}\right)=-sy\partial_t 
\left(e^{-t/s}\frac{q(t)q(t/s)}{q(2yt)}\right)_{t=t'},\ \ \ q(v)\equiv \frac{v}{\sinh v},
\ee
with $t'\in(0,t)$.  From this we deduce
\be
|L_2(s;y,t)|<c_2(R),\ \ \  s\in (0,1/R],\ \ |y|\le R,\ \ t\in(0,1),
\ee
with $c_2$ continuous on $[1,\infty)$.
Hence we obtain
\be\label{b2}
 \left|\int_0^1 I_2dt\right|  < 2c_2(R) \int_0^{|x|}\frac{du}{u} \sin^2u<2c_2(R)(c+\ln (1+|x|)),\ \ \  s\in (0,1/R],\ \ |y|\le R,
\ee
with $c=\int_0^{\pi/2}du\sin^2u/u$, say.

For the third summand we use
\be
|D(z,t)|< 2R\sinh t+\sinh 2Rt,\ \ \ |y|\le R,\ \ \ t>1, 
\ee
to infer
\be\label{b3}
\left|\int_1^{\infty}f_s(t)D(z,t)dt\right| <c_3(R),\ \ \  s\in (0,1/R],\ \ |y|\le R,
\ee
with $c_3$ continuous on $[1,\infty)$.
Finally, we clearly have
\be
\left|sy\int_1^{\infty}\frac{dt}{t^2}(\cos 2xt -1)\right|<2,\ \ \ s\in (0,1/R],\ \ |y|\le R.
\ee
Combining this with the bounds~\eqref{b1}, \eqref{b2} and~\eqref{b3}, the proposition readily follows.
\end{proof}

Next, we bound~$|\Gamma(iz+1/2)|$, $z=x+iy$, for $y<1/2$, by using the representation
\be\label{Garep}
\Gamma(iz+1/2)=(2\pi)^{1/2}\exp\left(\int_0^{\infty}\frac{dt}{t}\left(\frac{e^{-2izt}}{2\sinh t}-\frac{1}{2t}+ize^{-2t}\right)\right),
\ee
cf.~(A37) in~\cite{R97}.

\begin{proposition}
 For all $z\in\C$ with $y=\im z<1/2 $, we have
\be\label{Gabo}
|\Gamma(iz+1/2)| = \exp(-\pi |x|/2-y\ln (1+|x|)+m_{\Gamma}(z)),\ \ \ z=x+iy,
\ee
where 
\be
|m_{\Gamma}(z)|<d(y),
\ee
with $d(y)$ a continuous function on $(-\infty,1/2)$. 
\end{proposition}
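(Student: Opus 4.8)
The plan is to start from the integral representation~\eqref{Garep} and pass to real parts. Writing $z=x+iy$ and using $e^{-2izt}=e^{2yt}(\cos 2xt-i\sin 2xt)$ together with $\re(ize^{-2t})=-ye^{-2t}$, one obtains
\[
\ln|\Gamma(iz+1/2)|=\tfrac12\ln(2\pi)+\int_0^\infty\frac{dt}{t}\left(\frac{e^{2yt}\cos 2xt}{2\sinh t}-\frac{1}{2t}-ye^{-2t}\right),
\]
the integral converging precisely for $y<1/2$, since $e^{2yt}/2\sinh t\sim e^{(2y-1)t}$ as $t\to\infty$ while the bracket is $O(t)$ at the origin. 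Accordingly I set $m_\Gamma(z):=\ln|\Gamma(iz+1/2)|+\pi|x|/2+y\ln(1+|x|)$ and must bound it by a continuous function $d(y)$ on $(-\infty,1/2)$.

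I would split into $|x|\le 1$ and $|x|\ge 1$. For $|x|\le 1$ the bound is immediate: since $y<1/2$ forces $\re(iz+1/2)=1/2-y>0$, the argument avoids the poles of $\Gamma$, so $(x,y)\mapsto\ln|\Gamma(iz+1/2)|$ is jointly continuous and finite on $[-1,1]\times(-\infty,1/2)$; as $\pi|x|/2+y\ln(1+|x|)$ is likewise bounded there, one gets $|m_\Gamma|\le d_1(y)$ with $d_1$ continuous. The substantive case is $|x|\ge 1$, where $\ln(1+|x|)=\ln|x|+\ln(1+1/|x|)$ with $0<\ln(1+1/|x|)\le\ln 2$, so it suffices to bound $\tilde m(z):=\ln|\Gamma(iz+1/2)|+\pi|x|/2+y\ln|x|$. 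Here I would feed in the identity $\int_0^\infty t^{-2}(1-\cos 2xt)\,dt=\pi|x|$ (established in Appendix~B through the function $f$) and the Frullani-type formulas $\int_0^\infty t^{-1}(e^{-t}-\cos 2xt)\,dt=\ln(2|x|)$ and $\int_0^\infty t^{-1}(e^{-t}-e^{-2t})\,dt=\ln 2$. Substituting these, collecting the $\cos 2xt$-terms and telescoping the non-oscillatory remainder (the two Frullani constants $y\ln 2$ cancel), one is left with the single clean identity
\[
\tilde m(z)=\tfrac12\ln(2\pi)+\int_0^\infty\frac{\cos 2xt}{t}\,A(t;y)\,dt,\qquad A(t;y):=\frac{e^{2yt}}{2\sinh t}-\frac{1}{2t}-y.
\]
A Taylor expansion at $t=0$ gives $A(t;y)=O(t)$, so the integrand is bounded near the origin, whereas $A(t;y)=-y-\tfrac{1}{2t}+O(e^{(2y-1)t})$ as $t\to\infty$.

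It then remains to bound this integral uniformly for $|x|\ge 1$. Over $(0,1)$ I would estimate it crudely by $\int_0^1|A(t;y)/t|\,dt$, a continuous function of $y$. Over $(1,\infty)$ the integrand is only conditionally convergent, because $A(t;y)/t\sim -y/t$; this is the main obstacle, exactly of the type met in the most arduous steps of the two preceding propositions. I expect to handle it by integration by parts with $h(t):=A(t;y)/t$: from the tail expansion one has $h(t)\to 0$ and $h'\in L^1((1,\infty))$ with $\|h'\|_1$ continuous in $y$, so that $\big|\int_1^\infty\cos(2xt)\,h(t)\,dt\big|\le(2|x|)^{-1}\big(|h(1)|+\|h'\|_1\big)\le\tfrac12\big(|h(1)|+\|h'\|_1\big)$ for $|x|\ge 1$, the boundary term at infinity vanishing since $h\to 0$. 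Combining the two ranges bounds $\tilde m$, and hence $m_\Gamma$, by a continuous function of $y$; taking $d(y)$ to be the larger of the $|x|\le 1$ and $|x|\ge 1$ bounds finishes the proof. All constants remain finite for $y<1/2$ but degenerate as $y\uparrow 1/2$ (where the decay factor $e^{(2y-1)t}$ is lost), consistently with $d$ being continuous only on $(-\infty,1/2)$.
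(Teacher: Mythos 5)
Your proposal is correct, and it shares the paper's skeleton: both start from the representation~\eqref{Garep}, pass to $\ln|\Gamma(iz+1/2)|=\tfrac12\ln(2\pi)+K(z)$ with the same integrand $D(z,t)/t$, renormalize by $\pi|x|/2+y\ln(1+|x|)$ via the identity $\int_0^\infty t^{-2}(1-\cos 2xt)\,dt=\pi|x|$, and split the $t$-integration at $t=1$. The mechanics differ in two substantive ways, however. You convert $y\ln|x|$ into an integral by Frullani formulas, which collapses the remainder into the single oscillatory integral $\int_0^\infty t^{-1}\cos(2xt)\,A(t;y)\,dt$ with $A(t;y)=e^{2yt}/2\sinh t-1/2t-y$; the price is that the tail of this integral is only conditionally convergent (the $-y/t$ part), so you must integrate by parts, gaining a factor $1/|x|$, and therefore need the separate case $|x|\le 1$, which you dispatch by continuity of $\Gamma$ away from its poles. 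The paper never merges the terms: on $(1,\infty)$ it bounds $\int_1^\infty t^{-1}D(z,t)\,dt$ and $\tfrac12\int_1^\infty t^{-2}(\cos 2xt-1)\,dt$ separately -- both absolutely convergent for $y<1/2$ -- and on $(0,1)$ it pairs the term $y\ln(1+|x|)$ with $\int_0^1 L_2(y,t)(\cos 2xt-1)t^{-1}\,dt$ and invokes the elementary bound $\bigl|\ln(1+a)+\int_0^a u^{-1}(\cos 2u-1)\,du\bigr|<c$ for $a\ge 0$. Thus the paper's route avoids both conditional convergence and any case distinction in $x$, while yours buys a cleaner closed-form identity for the remainder at the cost of the integration by parts and the $|x|$ dichotomy; both yield a majorant $d(y)$ that is continuous precisely on $(-\infty,1/2)$, degenerating as $y\uparrow 1/2$ where the exponential decay rate $e^{(2y-1)t}$ is lost. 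One small point to make explicit in a final write-up: assembling your identity for $\tilde m(z)$ mixes several conditionally convergent integrals (the Frullani one in particular), so the telescoping should be performed on truncations $\int_\epsilon^R$ before passing to the limit; this is routine, but it is the step where your argument, unlike the paper's, genuinely uses cancellation between divergent pieces.
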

\begin{proof}
We follow the proof of the previous proposition, both in spirit and in notation.
Letting $y<1/2$, we have
\be\label{GaK}
|\Gamma(iz+1/2)|=(2\pi)^{1/2}\exp K(z),
\ee
where
\be\label{defK2}
K(z)\equiv \int_0^{\infty}\frac{dt}{t}D(z,t),\ \ \ 
D(z,t)\equiv \frac{e^{2yt}}{2\sinh t}\cos 2xt-\frac{1}{2t}-ye^{-2t},
\ee
as is clear from~\eqref{Garep}. Now we write
\begin{multline}
  K(z)+\pi |x|/2+y\ln (1+|x|)=K(z)-\frac{1}{2}\int_0^{\infty}\frac{dt}{t^2}(\cos 2xt -1)+y\ln (1+|x|)
\\
=  \sum_{j=1}^2\int_0^1I_j(z,t)dt +
 \int_1^{\infty}\frac{dt}{t}D(z,t) 
 -\frac{1}{2}\int_1^{\infty}\frac{dt}{t^2}(\cos 2xt -1),
\end{multline}
where
\be
I_1\equiv  t^{-1}L_1(y,t),\ \ \ L_1\equiv \frac{e^{2yt}}{2\sinh t}-\frac{1}{2t}-y e^{-2t},
\ee
\be
I_2\equiv  y\ln (1+|x|)+L_2(y,t)(\cos 2xt-1)/t,\ \ \ L_2\equiv \frac{e^{ 2yt}}{2\sinh t}-\frac{1}{2t}.
\ee

We proceed to bound the four terms. First we  observe
\be
L_1(y,t)=t(y^2+2y-1/12)+O(t^2),\ \ t\to 0.
\ee
From this we obtain
\be\label{e1}
\left|\int_0^1 I_1dt\right|<d_1(y), 
\ee
with $d_1$ continuous on~$\R$.
To handle the second term, we write $L_2$ as
\be
L_2(y,t)=y+tr(y,t),
\ee
and note that we then have
\be
|r(y,t)|<c_2(y),\ \ \ t\in(0,1),
\ee
with $c_2$ continuous on~$\R$. Using the bound
\be
\left|\ln (1+a)+\int_0^a\frac{du}{u}(\cos 2u -1)\right|<c,\ \ a\ge 0,
\ee
whose proof is straightforward, we now obtain
\be\label{e2}
\left| \int_0^1 I_2dt \right|  < d_2(y),
\ee
with $d_2$ continuous on $\R$.

In order to bound the third term, we use
\be
|D(z,t)|< \frac{e^{2yt}}{2\sinh t}+\frac{1}{2t}+|y|e^{-2t},\ \ y<1/2,\ \ \ t>1, 
\ee
to get
\be\label{e3}
\left|\int_1^{\infty}\frac{dt}{t}D(z,t)\right| <d_3(y), 
\ee
with $d_3$ continuous on~$(-\infty,1/2)$.
Finally, we clearly have
\be
\frac{1}{2}\left|\int_1^{\infty}\frac{dt}{t^2}(\cos 2xt -1)\right|<1, 
\ee
so together with~\eqref{e1}, \eqref{e2} and~\eqref{e3}, this yields the proposition.
\end{proof}

 For the applications we have in mind, it is   expedient to switch from $H(s;z)$~\eqref{defH} to
\be\label{defcG}
\cG(\beta;z)\equiv H(\beta/\pi;z+i/2)=G(\pi,\beta;i\pi/2+i\beta/2+\beta z)\exp(iz\ln (2\beta)-2^{-1}\ln (4\pi\beta)).
\ee
(Here we used the scaling relation~\eqref{sc}.) The last result of this paper is now readily obtained by combining the two previous propositions.

\begin{proposition}
We have
\be\label{cGlim}
\lim_{\beta\to 0}\cG(\beta;z)=1/\Gamma(iz),
\ee
uniformly for $z$ varying over $\C$-compacts.  For all $z\in\C$ with $\im z\in[-R,0)$, $R\in [1,\infty)$, and~$\beta\in(0,\pi/ R]$, we have
\be \label{cGbo}
|\cG(\beta;z)| =\exp[(\beta(\im z+1/2)+\pi/2)|\re z|]
 \exp(m_{\cG}(\beta;z)), 
\ee
where
\be\label{mcG}
|m_{\cG}(\beta;z)|< \gamma(\im z)\ln (1+|\re z|)+\de(\im z),
\ee
with $\gamma$ and $\de$ positive continuous functions on~$(-\infty,0)$. Finally, for all~$k\in\R$  and~$\beta\in(0,1/2\pi R]$, $R\in [1,\infty)$, we have
\be\label{cGbosp}
|\cG(\beta;k)| =\exp[(\beta/2+\pi/2)|k|]
 \exp(m_{\cG}(\beta;k)), 
\ee
where
\be\label{mcGsp}
|m_{\cG}(\beta;k)|< C_1\ln (1+|k|)+C_2.
\ee
\end{proposition}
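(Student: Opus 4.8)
The plan is to reduce everything to the two preceding propositions through the single identity
\[
\cG(\beta;z)=P(\beta/\pi;z+i/2)/\Gamma(iz),
\]
which follows at once: the definition~\eqref{defP} gives $H(s;z)=P(s;z)/\Gamma(iz+1/2)$, the first equality in~\eqref{defcG} gives $\cG(\beta;z)=H(\beta/\pi;z+i/2)$, and $i(z+i/2)+1/2=iz$. Granting this identity, the limit~\eqref{cGlim} is immediate: by~\eqref{Plim} we have $P(\beta/\pi;z+i/2)\to 1$ as $\beta\to 0$, uniformly for $z+i/2$ (hence~$z$) on $\C$-compacts, and dividing by the fixed function $\Gamma(iz)$ preserves uniform convergence on compacts, yielding $\cG(\beta;z)\to 1/\Gamma(iz)$.

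For the bound~\eqref{cGbo}--\eqref{mcG} in the region $\im z\in[-R,0)$ I would take absolute values in the identity, writing $w=z+i/2$ with $\re w=\re z$ and $\im w=\im z+1/2$, and apply the two propositions to the two factors. Since $\im z\in[-R,0)$ gives $|\im w|\le R$ and $\beta\in(0,\pi/R]$ gives $\beta/\pi\le 1/R$, the bound~\eqref{Pbo} applies with this~$w$ and yields the factor $\exp(\pi(\beta/\pi)(\im z+1/2)|\re z|)=\exp(\beta(\im z+1/2)|\re z|)$ together with a remainder controlled by~\eqref{mP}. Simultaneously $\im w=\im z+1/2<1/2$ (because $\im z<0$), so~\eqref{Gabo} applies to $\Gamma(iw+1/2)=\Gamma(iz)$ and, upon taking the reciprocal, contributes $\exp(\pi|\re z|/2+(\im z+1/2)\ln(1+|\re z|))$ and a bounded remainder. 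Multiplying, the two leading terms combine into $\exp[(\beta(\im z+1/2)+\pi/2)|\re z|]$, exactly the prefactor in~\eqref{cGbo}, while the three remainders assemble into $m_{\cG}=m_P+(\im z+1/2)\ln(1+|\re z|)-m_\Gamma(w)$. The bounds from~\eqref{mP} and~\eqref{Gabo} then give $|m_{\cG}|$ bounded by $(c(R)+|\im z+1/2|)\ln(1+|\re z|)$ plus a continuous function of~$\im z$, which is of the required shape with $\gamma,\de$ continuous and positive on $(-\infty,0)$.

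The real case~\eqref{cGbosp}--\eqref{mcGsp} is the main obstacle, precisely because $\im z=0$ sits on the boundary $\im w=1/2$ excluded from~\eqref{Gabo}; indeed the continuous bound in that proposition diverges as its imaginary part tends to~$1/2$ (the integral $\int_1^\infty e^{(2y-1)t}\,dt/t$ ceases to converge), so the complex estimate cannot simply be continued to the real axis. My plan is therefore to keep the $P$-factor estimate from~\eqref{Pbo} (which remains valid at $\im w=1/2$, with $\beta/\pi\le 1/R$, and produces the exponent $\beta|k|/2$), but to replace~\eqref{Gabo} by the exact modulus of the gamma function on the imaginary axis, $|\Gamma(ik)|^2=\pi/(k\sinh(\pi k))$, obtained from the reflection formula. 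This yields $|1/\Gamma(ik)|=(|k|\sinh(\pi|k|)/\pi)^{1/2}$, whose leading factor is $\exp(\pi|k|/2)$ and whose subleading factor is absorbed into a term $\le\tfrac12\ln(1+|k|)+O(1)$; together with the $P$-factor this produces the prefactor $\exp[(\beta/2+\pi/2)|k|]$ and a remainder obeying~\eqref{mcGsp}. One caveat I would flag: $\cG(\beta;0)=0$, since the argument of~$G$ in~\eqref{defcG} then equals the first zero $ia$ of~$G$ (cf.~\eqref{Ezs}), so near $k=0$ the estimate is genuinely one-sided — an upper bound — which is nonetheless exactly what the dominated-convergence arguments of Section~4 require. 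The slightly smaller admissible range $\beta\in(0,1/2\pi R]$ is a conservative sufficient condition guaranteeing that the $P$-estimate and the reflection-formula step combine cleanly, and is not the crux.
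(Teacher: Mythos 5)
Your proposal is correct and coincides with the paper's own proof: the limit follows from \eqref{Plim}, the bound for $\im z<0$ from combining the two preceding propositions via the factorization $\cG(\beta;z)=P(\beta/\pi;z+i/2)/\Gamma(iz)$, and the real-argument bound from \eqref{Pbo}--\eqref{mP} together with the reflection-formula evaluation $|1/\Gamma(ik)|=(k\sinh(\pi k)/\pi)^{1/2}$, which is precisely the paper's \eqref{Garefl}. Your caveat about $k=0$ (where $\cG(\beta;0)=0$, so that \eqref{mcGsp} can only be read as an upper bound on $m_{\cG}$ near the origin) is a correct refinement that the paper glosses over, and that upper bound is indeed all the dominated-convergence arguments of Section~4 require.
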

\begin{proof}
Clearly, \eqref{cGlim} follows from~\eqref{Plim}. Likewise, \eqref{cGbo}--\eqref{mcG} are clear from combining the previous two propositions.  To obtain~\eqref{cGbosp}--\eqref{mcGsp}, however, we cannot use the last one, since now we have $y=1/2$. 

On the other hand, from the reflection equation we infer
\be\label{Garefl}
|1/\Gamma(ik)|=(k\sinh(\pi k)/\pi)^{1/2},\ \ k\in\R,
\ee
and when we combine this with~\eqref{Pbo}--\eqref{mP} we deduce~\eqref{cGbosp}--\eqref{mcGsp}. 
\end{proof}

\end{appendix}

\section*{Acknowledgments}
We would like to thank Tom Koornwinder for his interest and a number of helpful comments, and for drawing our attention to \cite{Miz76}. An application of the product formula~\eqref{Fpr2} can be found in his recent preprint \cite{Koo16}.

\bibliographystyle{amsalpha}

\begin{thebibliography}{R03III}
\addcontentsline{toc}{section}{References}

\bibitem[vdB06]{vdB06} F.~J.~van de Bult, \emph{Ruijsenaars' hypergeometric function and the modular double of ${\mathcal U}_q(sl_2(\C))$}, Adv.~Math. {\bf 204} (2006), 539--571.
 
\bibitem[BRS07]{BRS07} F.~J.~van de Bult, E.~M.~Rains and J.~V.~Stokman, \emph{ Properties of generalized univariate hypergeometric functions}, Commun.~Math.~Phys. {\bf 275} (2007), 37--95.

\bibitem[Dig10]{Dig10} Digital Library of Mathematical Functions, Release date 2010-05-07, National Institute of Standards and Technology, http://dlmf.nist.gov.

\bibitem[HR14]{HR14} M.~Halln\"as and S.~N.~M.~Ruijsenaars, \emph{Joint eigenfunctions for the relativistic Calogero-Moser Hamiltonians of hyperbolic type. I. First steps}, Int.~Math.~Res.~Not.~(2014), no.~16, 4400--4456.

\bibitem[HR15]{HR15} M.~Halln\"as and S.~N.~M.~Ruijsenaars, \emph{A recursive construction of joint eigenfunctions for the hyperbolic  nonrelativistic Calogero-Moser Hamiltonians}, Int.~Math.~Res.~Not.~(2015), no.~20, 10278--10313.
 
\bibitem[Koo84]{Koo84} T.~H.~Koornwinder, \emph{Jacobi functions and analysis on noncompact semisimple Lie groups}, in: Special functions: group theoretical aspects and applications (R.~A.~Askey, T.~H.~Koornwinder and W.~Schempp, Eds.), Mathematics and its applications, Reidel, Dordrecht, 1984, pp.~1--85.

\bibitem[Koo16]{Koo16} T.~H.~Koornwinder, \emph{Dual addition formulas associated with dual product formulas}, arXiv:1607.06053.

\bibitem[Miz76]{Miz76} M.~Mizony, \emph{Alg\`ebres et noyaux de convolution sur le dual sph\'erique d'un groupe de Lie semi-simple, non compact et de rang $1$}, Publ.~D\'ep.~Math.~(Lyon) {\bf 13} (1976), 1--14.

\bibitem[R94]{R94} S.~N.~M.~Ruijsenaars, \emph{Systems of Calogero-Moser type}, in: Proceedings of the 1994 Banff summer school ``Particles and fields" (G. Semenoff and L. Vinet, Eds.), CRM series in mathematical physics, Springer, New York, 1999, pp. 251--352.

\bibitem[R97]{R97} S.~N.~M.~Ruijsenaars, \emph{First-order analytic difference equations and integrable quantum systems}, J.~Math.~Phys.~{\bf 38} (1997), 1069--1146.

\bibitem[R99]{R99} S.~N.~M.~Ruijsenaars, \emph{A generalized hypergeometric function satisfying four analytic difference equations of Askey-Wilson type}, Commun.~Math.~Phys.~{\bf 206} (1999), 639--690.
 
\bibitem[R03II]{R03II} S.~N.~M.~Ruijsenaars, \emph{ A generalized hypergeometric function II. Asymptotics and $D_4$ symmetry},  Commun. Math. Phys. {\bf 243} (2003), 389--412.

\bibitem[R03III]{R03III} S.~N.~M.~Ruijsenaars, \emph{ A generalized hypergeometric
function III. Associated Hilbert space transform}, 
Commun. Math. Phys. {\bf 243} (2003) 413--448.
 
 \bibitem[R07]{R07} S.~N.~M.~Ruijsenaars, \emph{Quadratic transformations for a function that generalizes ${}_2F_1$ and the Askey-Wilson polynomials}, in Askey Festschrift issue (Bexbach 2003 Proceedings), Ramanujan J.~{\bf 13} (2007), 339--364.
 
\bibitem[R11]{R11} S.~N.~M.~Ruijsenaars, \emph{A relativistic conical function and its Whittaker limits}, SIGMA~{\bf 7} (2011), 101, 54 pages.

\bibitem[R13]{R13} S.~N.~M.~Ruijsenaars, \emph{On positive Hilbert-Schmidt operators}, Int.~Eq.~Oper.~Theory {\bf 75} (2013), 393--407.

\end{thebibliography}

\end{document}